\newcommand{\R}{\mathds R}
\newcommand{\C}{\mathds C}
\newcommand{\N}{\mathds N}
\newcommand{\Z}{\mathds Z}
\newcommand{\Sp}{\mathds S}
\newcommand{\Od}{{\cal O}}
\newcommand{\eps}{\varepsilon}
\renewcommand{\Re}{\text{Re}}
\renewcommand{\Im}{\text{Im}}
\renewcommand{\sectionmark}[1]%
{\markright{#1}}
\renewcommand{\subsectionmark}[1]{}
\theoremstyle{plain}
\newtheorem{theorem}{Theorem}[section]
\newtheorem{proposition}[theorem]{Proposition}
\newtheorem{corollary}[theorem]{Corollary}
\newtheorem{lemma}[theorem]{Lemma}
\theoremstyle{definition}
\newtheorem{remark}[theorem]{Remark}
\newtheorem{example}[theorem]{Example}
\newtheorem*{theorem*}{Theorem}
\newtheorem*{proposition*}{Proposition}
\newtheorem*{lemma*}{Lemma}
\newtheorem*{corollary*}{Corollary}
\newtheorem*{remark*}{Remark}
\newtheorem*{observation*}{Observation}
\newtheorem*{example*}{Example}
\newtheorem*{examples*}{Examples}
\newtheorem*{assumption*}{Assumption}
\theoremstyle{definition}
\newtheorem{definition}[theorem]{Definition}
\newtheorem*{definition*}{'Definition'}
\newtheorem*{definitionu*}{Definition}
\title{Approximation of conformal mappings by circle patterns}
\author{Ulrike B\"ucking
\thanks{Partially supported by the
DFG Research Unit ``Polyhedral Surfaces''
and by the DFG Research Center {\sc Matheon} ``Mathematics for key
technologies''}
}
\date{}
\begin{document}

\maketitle

\begin{abstract}
A circle pattern is a configuration of circles in the plane whose combinatorics
is given by a planar graph $G$ such that to each vertex of $G$ there
corresponds a circle. If two vertices are connected by an edge in $G$ then the
corresponding circles intersect with an intersection angle in $(0,\pi)$ and
these intersection points can be associated to the dual graph $G^*$.

Two sequences of circle patterns are employed to approximate a given conformal
map $g$ and its first derivative. For the domain of $g$ we use embedded circle
patterns where all circles have the same radius $\eps_n>0$ for a sequence
$\eps_n\to 0$ and where the intersection angles are uniformly bounded. The
image circle patterns have the same combinatorics and intersection angles and
are determined
from boundary conditions (radii or angles) according to the values of $g'$
($|g'|$ or $\arg g'$). The error is of order $1/\sqrt{-\log \eps_n}$.
For quasicrystallic circle patterns
the convergence result is strengthened to
$C^\infty$-convergence on compact subsets and an error of order $\eps_n$.
\end{abstract}

\section{Introduction}
Conformal mappings constitute an important class in the field of complex
analysis.
They may be characterized by the fact that infinitesimal circles are mapped to
infinitesimal circles.
Suitable discrete analogs are of actual interest in the
area of discrete differential geometry and its applications, see~\cite{BSSZ08}.

Bill Thurston first introduced in his
talk~\cite{Thu85} the idea to use finite circles, in particular circle packings,
to define a discrete conformal mapping.
Remember that an {\em embedded planar circle packing} is a configuration of
closed disks with
disjoint interiors in the plane $\C$. Connecting the centers of touching disks
by straight lines yields the {\em tangency graph}.
Let ${\mathscr C}_1$ and ${\mathscr C}_2$ be two circle
packings whose tangency graphs are combinatorially the same. Then
there is a mapping $g_{\mathscr C}:{\mathscr C}_1\to {\mathscr C}_2$ which maps
the centers of circles of ${\mathscr C}_1$
to the corresponding centers of circles of ${\mathscr C}_2$ and is an affine map
on each triangular region corresponding to three mutually tangent circles.
Various connections
between circle packings and classical complex analysis have already been
studied.
A beautiful
introduction and surway is presented by Stephenson in~\cite{St05}.
In particular, several results concerning convergence, i.e.\ quantitative
approximation of conformal mappings by $g_{\mathscr C}$, have been obtained,
see~\cite{RS87,CR92,HeSch96,HeSch98}.

The class of {\em circle patterns} generalizes circle packings as for each
circle packing there is an associated orthogonal circle pattern. Simply add a
circle for each triangular face which passes through the three touching points.
To define a circle pattern we use a planar graph as combinatorial data.
The circles correspond to vertices and the edges specify which circles should
intersect. The intersection angles are given using a labelling
on the edges. Thus an edge corresponds to a kite of two intersecting circles as
in Figure~\ref{figbquad}~(right). Moreover, for interior vertices the kites
corresponding
to the incident edges have disjoint interiors and their union is homeomorphic to
a closed disk.
Similarly as for circle packings, there are results on existence, rigidity, and
construction of special circle pattern. See for
example~\cite{Ri94,Sch97,AB00,BS02,BH03}, where some of the references use a
generalized notion of circle patterns.

Given two circle patterns ${\mathscr C}_1$ and ${\mathscr C}_2$ with the same
combinatorics and intersection angles, define a mapping $g_{\mathscr
C}: {\mathscr C}_1\to {\mathscr C}_2$ similarly as for circle packings. 
Namely, take $g_{\mathscr C}$ to map the centers of circles and the
intersection points of ${\mathscr C}_1$ corresponding to vertices and faces of
$G$ to the corresponding centers of circles and
intersection points of ${\mathscr C}_2$ and extend it to an affine map on each
kite.

For a given conformal map $g$ we use an analytic approach and specify
suitable boundary values for the radius or the angle function according to
$|g'|$ or to $\arg g'$ respectively in order to define the (approximating)
mappings $g_{\mathscr C}$. 
Generalizing ideas of Schramm's convergence proof
in~\cite{Sch97} we obtain convergence in $C^1$ on compact sets if we take for
${\mathscr C}_1^{(n)}$ a sequence of isoradial circle patterns (i.e.\ all radii
are equal) with decreasing radii $\eps_n\to 0$ which approximate
the domain of
$g$. Furthermore, we assume that the intersection angles are uniformly bounded
away from $0$ and $\pi$.
Note in particular that the combinatorics of the circle pattern ${\mathscr
C}_1^{(n)}$ may be irregular or change within the sequence. Thus our
convergence results applies to a considerably broader class of circle patterns
as the known results of Schramm~\cite{Sch97}, Matthes~\cite{Ma05}, or Lan and
Dai~\cite{LD07} for orthogonal circle patterns with square grid combinatorics.

The main idea of the proof is to consider a ``nonlinear discrete Laplace
equation'' for the radius function. This equation turns out to be a (good)
approximation of a known linear Laplace equation and can be used in the case of
isoradial circle patterns to compare discrete and smooth
solutions of the corresponding elliptic problems, that is the logarithm of the
radii of ${\mathscr C}_2^{(n)}$ and $\log|g'|=\Re(\log g')$.
Also, we obtain an a priori estimation of the approximation error
of order $1/\sqrt{\log(1/\eps_n)}$.

If the circle patterns ${\mathscr C}_1^{(n)}$ additionally have only a
uniformly bounded number of different edge directions,
then the corresponding kite patterns are quasicrystallic
rhombic embeddings and the circle patterns ${\mathscr C}_1^{(n)}$ are called
{\em quasicrystallic}~\cite{BMS05}.
For such embeddings we generalize an asymptotic development
given by Kenyon in~\cite{Ke02} of a discrete Green's function. Also, using
similar ideas as Duffin in~\cite{Du53}, we generalize theorems of discrete
potential theory
concerning the regularity of solutions of a discrete Laplace equation. We then
use these results together with the $C^1$-convergence for isoradial circle
patterns and prove
$C^\infty$-convergence on compact sets for a class of quasicrystallic circle
patterns.
In this case the approximation error is
of order $\eps_n$ (or $\eps_n^2$ for square grid or hexagonal
combinatorics and regular intersection angles).
The proof generalizes a method used by He and Schramm in~\cite{HeSch98}.

The article is organized as follows: First we introduce and remind the
terminology and some results
on circle patterns in Section~\ref{CircelPatterns}, focusing in particular on
the radius and the angle function. In Section~\ref{secDirichlet} we formulate
and prove the theorems on $C^1$-convergence for isoradial circle patterns.
After a brief review on quasicrystallic circle patterns in
Section~\ref{secQuasiCirc} we state and prove in
Section~\ref{secConvQuasiCinfty} the theorem on $C^\infty$-convergence.
The necessary results on discrete potential theory are presented
in Appendix~\ref{secPropGreen}.
An extended and more details version of the results can be found in~\cite{diss}.

\thanks{
The author would like to thank Alexander I.~Bobenko, Boris Springborn, and Yuri
Suris for various discussions and helpful advice.}

\section{Circle patterns}\label{CircelPatterns}
To define circle patterns we use combinatorial data and 
intersection angles.

The combinatorics are specified by a {\em b-quad-graph} $\mathscr D$, that
is a strongly regular cell
decomposition of a domain in $\C$ possibly with boundary such that all
2-cells (faces) are embedded  and counterclockwise oriented.
Furthermore all faces of $\mathscr D$ are
quadrilaterals, that is there are exactly four edges incident to
each face, and the 1-skeleton of $\mathscr D$ is a
bipartite graph.
We always assume that the
vertices of $\mathscr D$ are colored white and black. To these two sets of
vertices we associate two planar graphs $G$ and $G^*$ as follows.
The vertices $V(G)$ are all white vertices of 
$V(\mathscr D)$. The edges $E(G)$ correspond to faces of $\mathscr D$, that is
two vertices of $G$ are connected by an edge if and only if they
are incident to the same face of $\mathscr D$. The
dual graph $G^*$ is constructed analogously by taking for
$V(G^*)$ all black vertices of $V(\mathscr D)$.
$\mathscr D$ is called {\em simply
  connected} if it is the cell decomposition of a simply connected
domain of $\C$
and if every closed chain of faces is null homotopic in $\mathscr D$.

For the intersection angles, we use a labelling
$\alpha:F({\mathscr D})\to (0,\pi)$ of the faces of $\mathscr D$. By abuse of
notation, $\alpha$ can also be understood as a function defined on $E(G)$ or on
$E(G^*)$.
The labelling $\alpha$ is called {\em admissible} if it satisfies the following
condition at all interior black vertices $v\in V_{int}(G^*)$:
\begin{equation}\label{condalpha}
  \sum_{f \text{ incident to } v} \alpha(f) = 2\pi.
\end{equation}

\begin{figure}[h]
\begin{center}
 \includegraphics[width=0.45\textwidth]{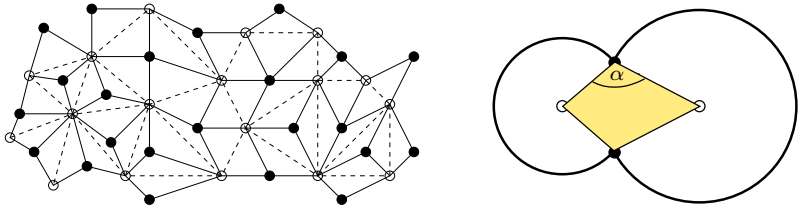}
\hspace{2em}
\input{intersectionAngleKite2.pstex_t}
\end{center}
\caption{{\it Left:} An example of a b-quad-graph $\mathscr D$ (black edges and
bicolored
  vertices) and its associated graph $G$ (dashed edges and white vertices).
{\it Right:} The exterior intersection angle $\alpha$
of two intersecting circles and the associated
  kite built from centers and intersection points.}\label{figbquad}
\end{figure}

\begin{definition}\label{defcircpattern}
Let $\mathscr D$ be a b-quad-graph and let $\alpha:E(G)\to (0,\pi)$
be an admissible labelling.
  An {\em (immersed planar) circle pattern for  $\mathscr D$ (or $G$) and
    $\alpha$} are an indexed collection ${\mathscr 
    C}=\{C_z:z\in V(G)\}$ of circles in $\C$ and an indexed
  collection ${\mathscr K}=\{K_e:e\in E(G)\}=\{K_f:f\in F({\mathscr
    D})\}$ of closed
  kites, which all carry the same orientation, such that the following
conditions hold.
\begin{enumerate}[(1)]
  \item\label{intersectionPoint}
If $z_1,z_2\in V(G)$ are incident vertices in $G$,
the corresponding circles $C_{z_1},C_{z_2}$ intersect with exterior
intersection angle $\alpha([z_1,z_2])$.
 Furthermore, the kite $K_{[z_1,z_2]}$
    is bounded by the centers of the circles $C_{z_1},C_{z_2}$,
    the two intersection points, and the corresponding edges, as in
Figure~\ref{figbquad} (right).
 The intersection points
are associated to black vertices of $V(\mathscr D)$ or to vertices of $V(G^*)$.
\item If two faces are incident in
    $\mathscr D$, then the corresponding kites have one edge
    in common.
  \item Let $f_1,\dots,f_n\in F({\mathscr D})$ be the faces
    incident to an interior vertex $v\in V_{int}({\mathscr D})$. Then 
    the kites $K_{f_1},\dots,K_{f_n}$ have mutually disjoint
    interiors.
Their union $K_{f_1}\cup\dots\cup
    K_{f_n}$ is homeomorphic to a closed disc and contains the point $p(v)$
corresponding to $v$ in its interior.
  \end{enumerate}
  The circle pattern is called {\em embedded} if all kites of
  $\mathscr K$ have mutually disjoint interiors.
 It is called {\em isoradial} if all
  circles of $\mathscr C$  have the same radius.
\end{definition}

  Note that we associate a circle pattern $\mathscr C$ to an immersion of the
kite pattern $\mathscr K$ corresponding to ${\mathscr D}$ where the edges
incident to the same white vertex are of equal length. The kites
can also be reconstructed from the set of circles using the
combinatorics of $G$.
Note further, that there are in general additional intersection points of
circles which are not associated to black vertices of $V({\mathscr D})$.
Some examples are shown in Figure~\ref{figExCirc}.

\begin{figure}[tb]
\subfigure[A regular square grid circle pattern.]{\label{figExQuad}
\includegraphics[height=2.55cm]{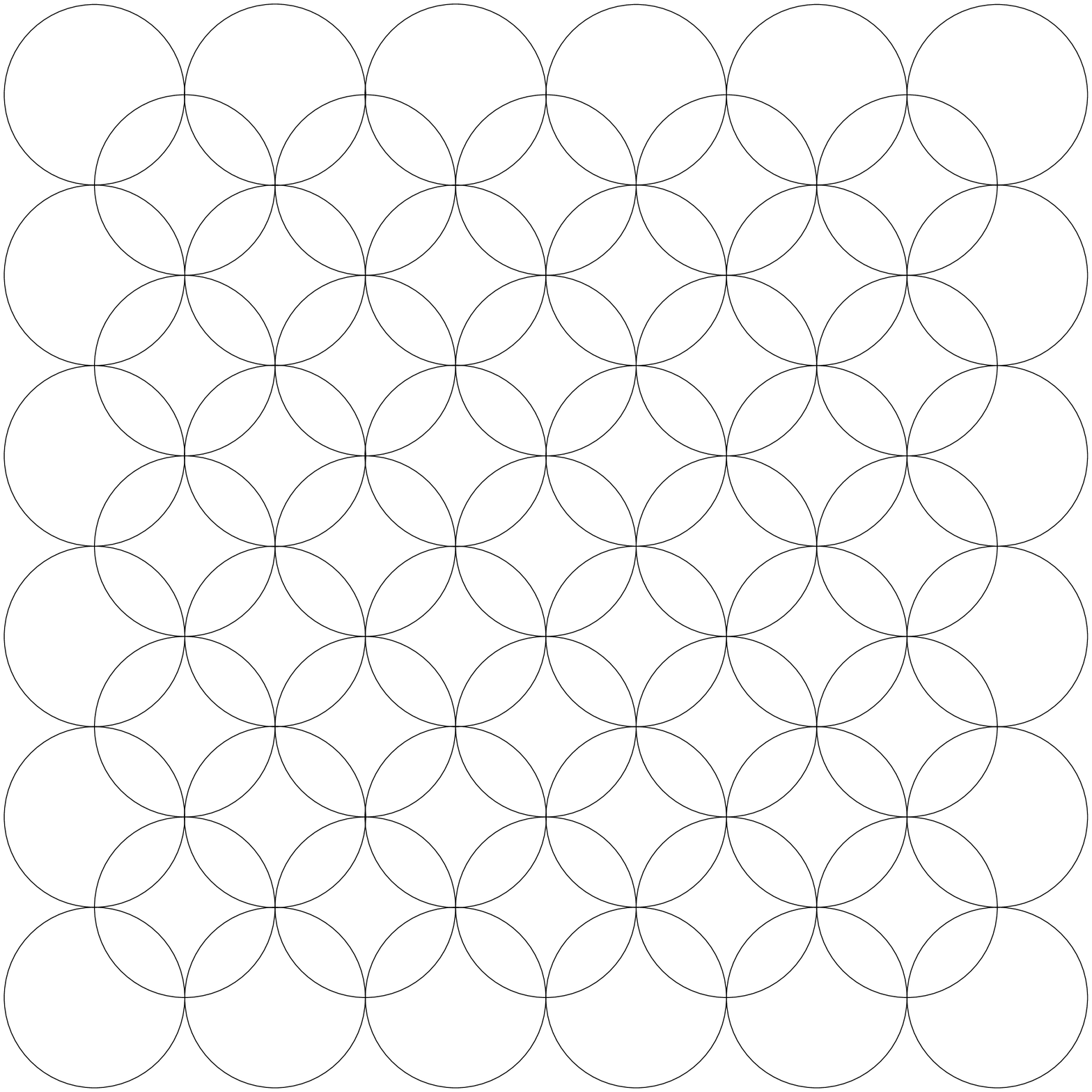}
\hspace{0.05cm}
}
\hspace{0.15cm}
\subfigure[A regular hexagonal circle pattern.]{\label{figExHex}
\includegraphics[height=2.55cm]{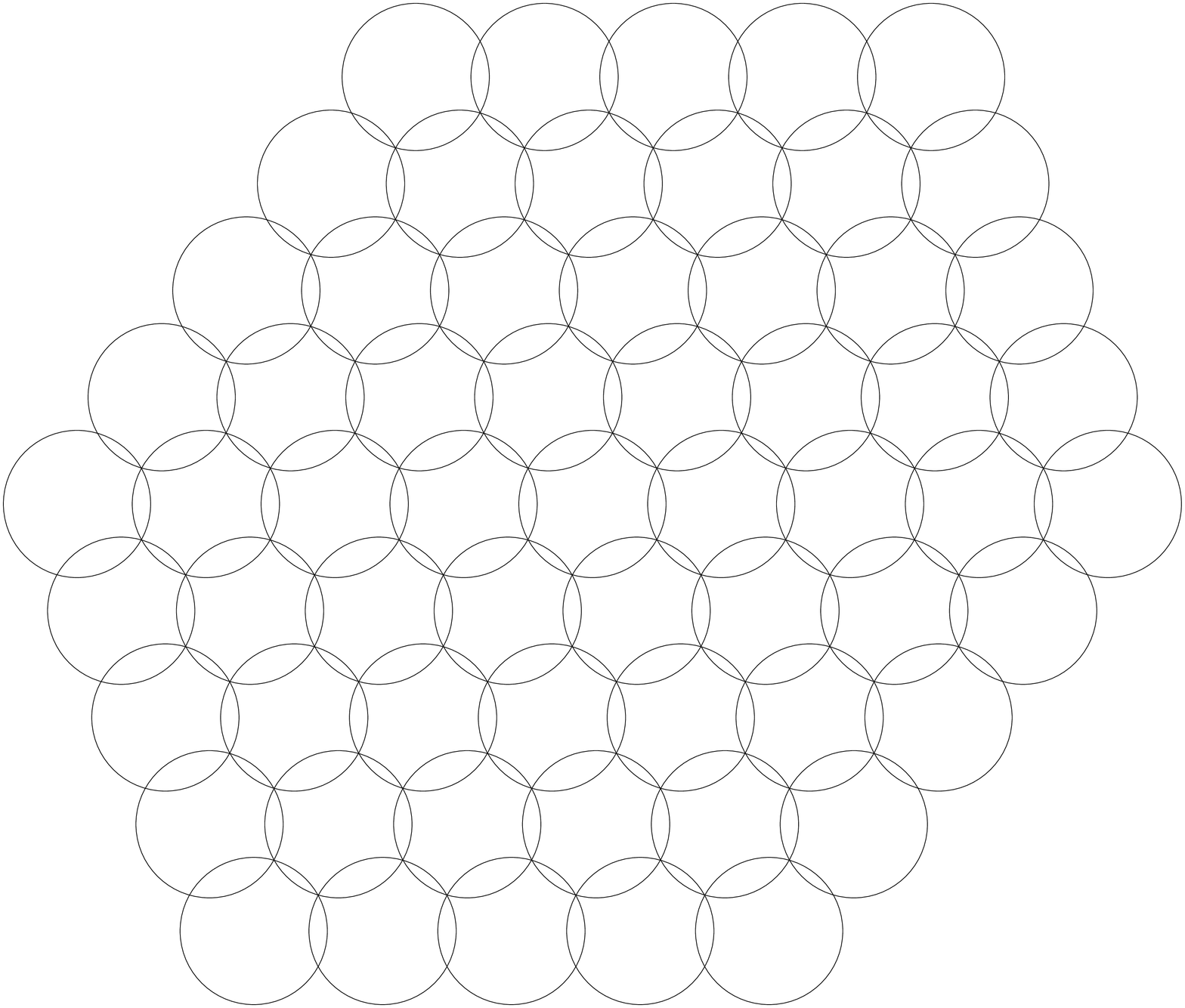}
}
\hspace{0.2cm}
\subfigure[A rhombic embedding (a part of a Penrose tiling) and a
corresponding isoradial circle pattern.]{\label{figPenrose}
\includegraphics[height=2.55cm]{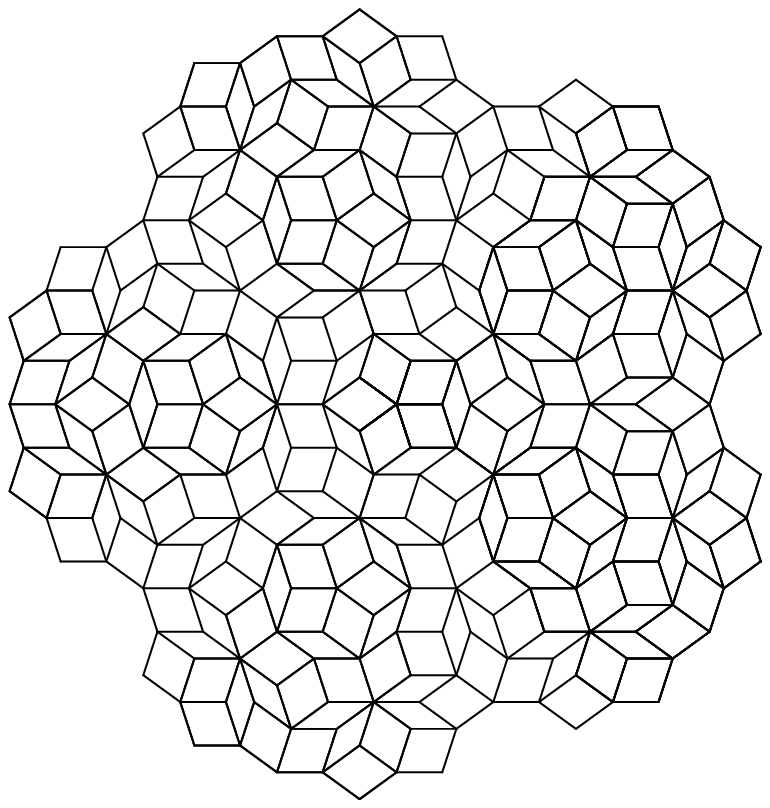}
\includegraphics[height=2.55cm]{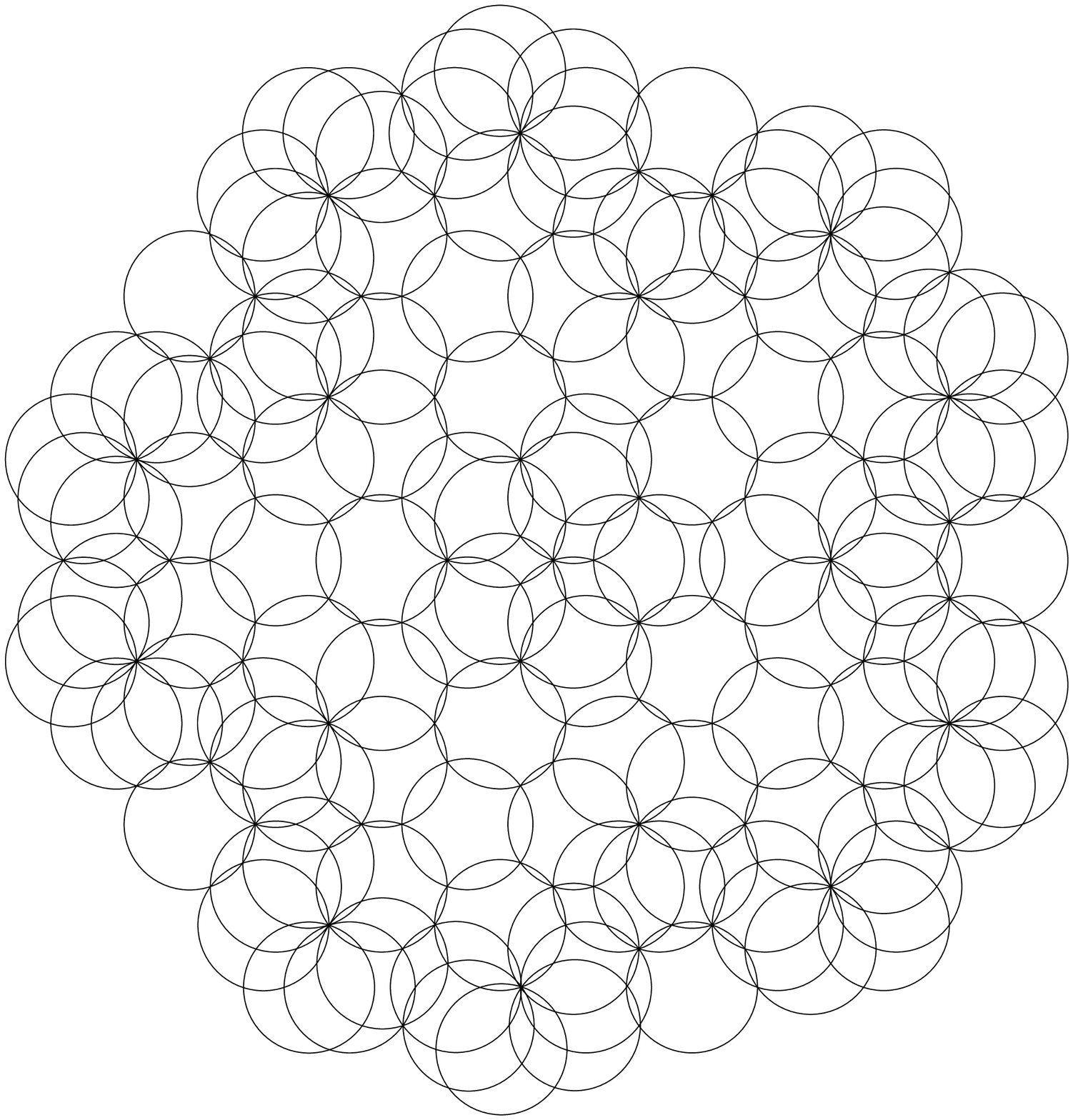}
}
\vspace{-1.5ex}
\caption{Examples of isoradial circle patterns.}\label{figExCirc}
\end{figure}

There are also other definitions for
 circle patterns, for example associated to a {\em Delaunay
 decomposition} of a domain in $\C$. This is a cell decomposition such
that the boundary of each face is a polygon with straight edges which is
inscribed in a circular disk, and these disks have no vertices in their
interior. The corresponding circle pattern can be associated to the
graph $G^*$.
The Poincar{\'e}-dual decomposition of a Delaunay decomposition with the
centers of the circles as vertices and straight edges is a {\em
    Dirichlet decomposition} (or {\em Voronoi diagram}) and corresponds
  to the graph $G$.

Furthermore the definition of circle patterns can be extended allowing
cone-like singularities in the vertices; see~\cite{BS02} and the references
therein.

 \subsection{The radius function}

Our study of a planar circle pattern ${\mathscr C}$ is based on
characterizations and properties of its
radius function $r_{\mathscr C}=r$ which assigns to every vertex $z\in V(G)$
the radius $r_{\mathscr C}(z)=r(z)$ of the corresponding circle $C_z$. The
index $\mathscr C$ will be dropped whenever there is no confusion likely.

The following proposition specifies a necessary and sufficient condition for a
radius function to originate from a planar circle pattern, see~\cite{BS02} for
a proof.
For the special case of orthogonal
circle patterns with the combinatorics of the square grid, there are also
other characterizations, see for example~\cite{Sch97}.

\begin{proposition}\label{propRadius}
Let $G$ be a graph constructed from a b-quad-graph $\mathscr
      D$ and let $\alpha$ be an admissible labelling. 

      Suppose that ${\mathscr C}$ is a planar circle
      pattern for ${\mathscr D}$ and $\alpha$ with radius function
      $r=r_{\mathscr C}$. Then for 
      every interior vertex $z_0\in V_{int}(G)$ we have
      \begin{equation} \label{eqFgen}
        \Biggl(\sum_{[z,z_0]\in E(G)} 
        f_{\alpha([z,z_0])}(\log r(z)-\log r(z_0))\Biggr) -\pi=0,
        \end{equation}
       where
        \[ f_\theta(x):=\frac{1}{2i}\log
        \frac{1-e^{x-i\theta}}{1-e^{x+i\theta}},\]
        and the branch of the logarithm is chosen such that
        $0<f_\theta(x)<\pi$. 

Conversely, suppose that $\mathscr D$ is simply connected and
      that $r:V(G)\to(0,\infty)$ satisfies~\eqref{eqFgen} for
      every $z\in  V_{int}(G)$. Then there is a planar circle pattern
      for $G$ and $\alpha$
whose radius function coincides with $r$.
This pattern is unique up to isometries of $\C$.
\end{proposition}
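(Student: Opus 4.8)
The plan is to reduce everything to one elementary computation for a single kite, and then for the converse run a developing-map (unfolding) argument. First I would record the \emph{kite identity}: if two circles of radii $R$ and $\rho$ meet with exterior intersection angle $\theta\in(0,\pi)$ and $K$ is the associated kite, then the angle of $K$ at the centre of the circle of radius $R$ equals $2f_\theta(\log\rho-\log R)$, while the angle of $K$ at each of the two intersection points equals $\theta$. To prove this I would split $K$ along its symmetry axis into two congruent triangles with sides $R$, $\rho$ and $d$ (the distance of the centres), the angle opposite $d$ being $\theta$; writing $\psi,\psi'$ for the angles at the two centres, the law of sines together with $\psi+\psi'+\theta=\pi$ gives $\rho\sin(\theta+\psi)=R\sin\psi$, hence $\cot\psi=(e^{-x}-\cos\theta)/\sin\theta$ with $x=\log\rho-\log R$. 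On the other hand, since $x$ and $\theta$ are real, $(1-e^{x-i\theta})/(1-e^{x+i\theta})$ is the quotient of a complex number by its conjugate, hence unimodular, so $f_\theta(x)=-\arg(1-e^{x+i\theta})$, which lies in $(0,\pi)$ because $1-e^{x+i\theta}$ has negative imaginary part; a direct computation of its cotangent again yields $(e^{-x}-\cos\theta)/\sin\theta$, and since $\psi,f_\theta(x)\in(0,\pi)$ we conclude $\psi=f_\theta(x)$. The angle at each intersection point is $\theta$ by the triangle decomposition.

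For the necessity of \eqref{eqFgen}, fix $z_0\in V_{int}(G)$ and let $[z_1,z_0],\dots,[z_n,z_0]$ be the edges of $G$ at $z_0$, that is, the faces of $\mathscr D$ incident to $z_0$. By the kite identity the kite $K_{[z_j,z_0]}$ has angle $2f_{\alpha([z_j,z_0])}(\log r(z_j)-\log r(z_0))$ at the centre of $C_{z_0}$. By part~(3) of Definition~\ref{defcircpattern} these kites have mutually disjoint interiors and their union is homeomorphic to a closed disc containing the centre of $C_{z_0}$ in its interior, so their angles at that point sum to exactly $2\pi$; dividing by $2$ gives \eqref{eqFgen}.

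For the converse, assume $\mathscr D$ simply connected and $r$ a solution of \eqref{eqFgen} at every interior vertex. I would build the pattern by unfolding. To each face $f=[z_1,z_2]$ of $\mathscr D$ assign the Euclidean kite $\widehat{K}_f$ that is uniquely determined up to orientation-preserving congruence by requiring its two ``centre'' vertices to be at distance $d_f$ (given by the law of cosines from $r(z_1),r(z_2),\alpha(f)$) and its angle at each ``intersection'' vertex to be $\alpha(f)$; fix the orientation of each $\widehat{K}_f$ to agree with that of $\mathscr D$. Place one kite arbitrarily and then attach the kite of every adjacent face along the common edge, continuing throughout $\mathscr D$. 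This developing procedure is well defined locally; it is globally well defined provided the holonomy around each interior vertex of $\mathscr D$ is trivial, i.e.\ the incident kites close up with total angle $2\pi$. At an interior black vertex $v$ this total angle is $\sum_{f\ni v}\alpha(f)=2\pi$ by admissibility of $\alpha$, see~\eqref{condalpha}; at an interior white vertex $z_0$ it is $\sum_{[z,z_0]\in E(G)}2f_{\alpha([z,z_0])}(\log r(z)-\log r(z_0))=2\pi$ by~\eqref{eqFgen} and the kite identity. Since $\mathscr D$ is simply connected, every closed chain of faces is null homotopic and can be reduced to such elementary vertex loops, so the unfolding is consistent; the circles (centre the image of the white vertex $z$, radius $r(z)$) together with the assembled kites then satisfy (1)--(3) of Definition~\ref{defcircpattern}, and $r$ is visibly their radius function. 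The whole construction is determined by the placement of the first kite, and two placements differ by an isometry of $\C$, which yields uniqueness up to isometry.

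The hard part will be this last step: turning ``the holonomy around every interior vertex is trivial'' into a genuinely well-defined immersion. One must use the simple connectedness of $\mathscr D$ to reduce an arbitrary closed face-chain to a product of elementary vertex-loops, and then argue that the purely numerical angle conditions \eqref{condalpha} and \eqref{eqFgen} actually deliver property~(3) of Definition~\ref{defcircpattern} — disjoint kite interiors and a disc neighbourhood around each vertex — and not merely that the relevant angles sum to $2\pi$. By contrast, the kite identity itself is routine once the branch of the logarithm defining $f_\theta$ is pinned down via the unimodularity of $(1-e^{x-i\theta})/(1-e^{x+i\theta})$.
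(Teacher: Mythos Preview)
Your argument is correct and is essentially the standard proof; the paper does not give its own proof but refers to~\cite{BS02}, and in the remark immediately following the proposition it states precisely your kite identity (that $2f_{\alpha([z,z_0])}(\log r(z)-\log r(z_0))$ is the angle of the kite at $z_0$) and identifies~\eqref{eqFgen} as the closing condition for the chain of kites at $z_0$, i.e.\ condition~(3) of Definition~\ref{defcircpattern}. Your developing-map construction for the converse, with holonomy checked at black vertices via~\eqref{condalpha} and at white vertices via~\eqref{eqFgen}, is exactly the intended mechanism, and your closing caveat about upgrading ``angles sum to $2\pi$'' to the disc-neighbourhood property~(3) is the genuine residual issue (handled in~\cite{BS02} by the orientation convention on the kites).
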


Note that $2f_{\alpha([z,z_0])}(\log r(z)-\log r(z_0))$ is the angle at $z_0$
of the kite
with edge lengths $r(z)$ and $r(z_0)$ and angle $\alpha([z,z_0])$, as in
Figure~\ref{figbquad} (right).
Equation~\eqref{eqFgen} is the closing
condition for the chain of kites corresponding to the edges incident
to $z_0$ which is condition (3) of
Definition~\ref{defcircpattern}.

For further use we mention some properties of $f_\theta$, see for
example~\cite{Spr03}.

\begin{lemma}\label{lemPropf}
  \begin{enumerate}[(1)]
    \item The derivative of $f_\theta$ is
    $ f_\theta'(x)=\frac{\sin\theta}{2(\cosh x -\cos \theta)}>0$.
    So $f_\theta$ is strictly increasing.
    \item The function $f_\theta$ satisfies the functional equation
      $f_\theta(x)+f_\theta(-x)=\pi-\theta$.
    \item For $0<y<\pi-\theta$ the inverse function of $f_\theta$ is
      $f_\theta^{-1}(y)=\log\frac{\sin y}{\sin(y+\theta)}$.
\end{enumerate}
\end{lemma}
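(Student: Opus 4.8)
The plan is to dispatch the three items in order: (1) and (2) by direct differentiation, and (3) by solving the defining relation $y=f_\theta(x)$ explicitly for $x$. For (1) I would write $f_\theta(x)=\frac{1}{2i}\bigl(\log(1-e^{x-i\theta})-\log(1-e^{x+i\theta})\bigr)$ and differentiate termwise using $\frac{d}{dx}\log(1-e^{x\pm i\theta})=-e^{x\pm i\theta}/(1-e^{x\pm i\theta})$. Putting the two resulting fractions over the common denominator $(1-e^{x-i\theta})(1-e^{x+i\theta})=1-2e^{x}\cos\theta+e^{2x}$, the numerator collapses to $e^{x+i\theta}-e^{x-i\theta}=2ie^{x}\sin\theta$; cancelling the factor $2i$ and dividing numerator and denominator by $e^{x}$ turns the denominator into $2(\cosh x-\cos\theta)$, giving $f_\theta'(x)=\sin\theta/\bigl(2(\cosh x-\cos\theta)\bigr)$. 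Positivity is then immediate, since $\sin\theta>0$ for $\theta\in(0,\pi)$ and $\cosh x\ge 1>\cos\theta$, and strict monotonicity follows.

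For (2), rather than manipulating branches of the logarithm directly, I would observe that $g(x):=f_\theta(x)+f_\theta(-x)$ has derivative $f_\theta'(x)-f_\theta'(-x)$, which vanishes by (1) and the evenness of $\cosh$; hence $g$ is constant and equals $2f_\theta(0)$. A short computation ($1-e^{-i\theta}=2i\,e^{-i\theta/2}\sin(\theta/2)$, $1-e^{i\theta}=-2i\,e^{i\theta/2}\sin(\theta/2)$) gives $(1-e^{-i\theta})/(1-e^{i\theta})=-e^{-i\theta}=e^{i(\pi-\theta)}$, and since the branch is normalized by $0<f_\theta<\pi$ and $(\pi-\theta)/2\in(0,\pi)$ this yields $f_\theta(0)=(\pi-\theta)/2$, so $g\equiv\pi-\theta$.

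For (3), I would start from $y=f_\theta(x)$, exponentiate to $e^{2iy}=(1-e^{x-i\theta})/(1-e^{x+i\theta})$, clear the denominator, and collect the terms containing $e^{x}$; this gives the linear equation $e^{2iy}-1=e^{x}\bigl(e^{i(2y+\theta)}-e^{-i\theta}\bigr)$, hence $e^{x}=(e^{2iy}-1)/(e^{i(2y+\theta)}-e^{-i\theta})$. Factoring $e^{iy}$ out of both numerator and denominator reduces the right-hand side to $2i\sin y/(2i\sin(y+\theta))=\sin y/\sin(y+\theta)$; for $0<y<\pi-\theta$ both sines are strictly positive, so this is a genuine positive real number and taking logarithms gives $x=\log(\sin y/\sin(y+\theta))$. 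To make the statement about the inverse precise one should also record that $f_\theta$ maps $\R$ bijectively onto exactly $(0,\pi-\theta)$: this follows from the strict monotonicity of (1), the symmetry of (2), and the limits $f_\theta(x)\to 0$ and $f_\theta(x)\to\pi-\theta$ as $x\to-\infty$ and $x\to+\infty$, which are read off from the explicit formula (the ratio tends to $1$, respectively to $e^{-2i\theta}=e^{2i(\pi-\theta)}$).

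I do not expect any genuine obstacle: each step is an elementary algebraic or trigonometric simplification. The only point that warrants a little care, and the one I would double-check, is the consistent bookkeeping of the branch of the logarithm fixed by $0<f_\theta<\pi$ — both when passing to $e^{2iy}$ in (3) and when identifying $f_\theta(0)$ and the boundary limits in (2) and (3).
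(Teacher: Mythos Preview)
Your argument is correct in all three parts; the computations check out, including the branch bookkeeping at $f_\theta(0)=(\pi-\theta)/2$ and the limits $f_\theta(x)\to 0,\ \pi-\theta$ as $x\to\mp\infty$. The paper itself does not prove this lemma at all but only cites~\cite{Spr03}, so there is no approach to compare against; your write-up simply supplies the elementary details the paper omits.
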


\begin{remark}\label{remInterLap}
Equation~\eqref{eqFgen} can be interpreted as a
nonlinear Laplace equation for the radius function
and is related to a linear discrete Laplacian which is common in
the linear theory of discrete holomorphic functions; see for
example~\cite{Du68,Me,BMS05} for more details. This can be seen as follows.

Let $G$ be a planar graph and let $\alpha$ be an admissible
labelling. Assume there is a smooth one parameter family
of planar circle patterns ${\mathscr C}_\eps$ for $G$ and $\alpha$ with radius
function
$r_\eps$ for $\eps\in(-1,1)$. Then for every interior vertex $z_0$ with incident
vertices $z_1,\dots,z_m$ and all $\eps\in (-1,1)$ Proposition~\ref{propRadius}
implies that
\[\sum_{j=1}^m 
       2 f_{\alpha([z_j,z_0])}(\log r_\eps(z_j)-\log r_\eps(z_0)) = 2\pi .\]
Differentiating this equation with respect to $\eps$ at $\eps=0$, we obtain
\begin{equation}
\sum_{j=1}^m 
       2 f_{\alpha([z_j,z_0])}'(\log r_0(z_j)-\log r_0(z_0))
(v(z_j)-v(z_0)) = 0,
\end{equation}
where $v(z)=\frac{d}{d\eps}\log r_\eps(z)\rvert_{\eps=0}$. Thus 
$v$ satisfies a linear discrete Laplace equation with positive weights.
Lemma~\ref{lemPropf} and a simple calculation show that
\begin{equation}\label{eqLapc}
2 f_{\alpha([z_1,z_2])}'(\log r_0(z_1)-\log r_0(z_2)) =
\left | \frac{p(v_1)-p(v_2)}{p(z_1)-p(z_2)} \right|.
\end{equation}
Here $z_1,z_2\in V(G)$ are two incident vertices which correspond to the
centers of circles $p(z_1),p(z_2)$ of the circle pattern ${\mathscr C}_0$. The
two other corner points of the same kite are denoted by $p(v_1),p(v_2)$.
\end{remark}

In analogy to smooth harmonic functions, the radius function of a planar
circle pattern satisfies a maximum principle and a Dirichlet principle.

\begin{lemma}[Maximum Principle]\label{lemMaxPrinzip}
  Let $G$ be a finite graph associated to a b-quad-graph as above
  with some admissible labelling $\alpha$.
      Suppose ${\mathscr C}$ and ${\mathscr C}^*$ are two planar circle
      patterns for $G$ and $\alpha$ with radius functions
      $r_{\mathscr C},r_{{\mathscr C}^*} :V(G)\to(0,\infty)$.
      Then the maximum and minimum of the quotient $r_{\mathscr
C}/ r_{{\mathscr C}^*}$ is attained at the boundary.
\end{lemma}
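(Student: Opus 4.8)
The plan is to pass to logarithms of the radii and to combine the nonlinear Laplace equation~\eqref{eqFgen}, which both radius functions satisfy at every interior vertex, with the strict monotonicity of $f_\theta$ recorded in Lemma~\ref{lemPropf}(1). Set $u:=\log r_{\mathscr C}$ and $u^*:=\log r_{{\mathscr C}^*}$; these are well-defined real functions on $V(G)$ since the radii are positive, and $r_{\mathscr C}/r_{{\mathscr C}^*}$ attains its maximum resp.\ minimum at precisely those vertices where $u-u^*$ does. As $G$ is finite, $u-u^*$ does attain a maximum value $M$; the assertion to be shown is that $M$ is also attained at a boundary vertex (and symmetrically for the minimum).

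First I would establish the local comparison. Suppose $z_0\in V_{int}(G)$ satisfies $(u-u^*)(z_0)=M$. Then $u(z)-u(z_0)\le u^*(z)-u^*(z_0)$ for every vertex $z$ incident to $z_0$, and since each $f_{\alpha([z,z_0])}$ is strictly increasing,
\[
f_{\alpha([z,z_0])}\bigl(u(z)-u(z_0)\bigr)\ \le\ f_{\alpha([z,z_0])}\bigl(u^*(z)-u^*(z_0)\bigr),
\]
with equality precisely when $u(z)-u(z_0)=u^*(z)-u^*(z_0)$. Summing these inequalities over all edges of $G$ incident to $z_0$ and invoking~\eqref{eqFgen}, which applies here because $z_0$ is interior, for both ${\mathscr C}$ and ${\mathscr C}^*$, the two sums are each equal to $\pi$. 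Hence every single inequality must in fact be an equality, so $(u-u^*)(z)=M$ for every neighbour $z$ of $z_0$. Reversing all inequalities gives the analogous statement at an interior minimum.

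It then remains to propagate to the boundary. Let $A\subseteq V(G)$ be the nonempty set of vertices where $u-u^*$ equals $M$. The local step says that as soon as an interior vertex belongs to $A$, so do all of its neighbours. Thus if $A$ contained no boundary vertex it would consist of interior vertices only and be closed under taking neighbours, hence be a union of connected components of $G$; following any edge-path inside such a component one would never leave $A$, which is impossible, since a component of the (finite, connected with nonempty boundary) graph $G$ arising from the b-quad-graph contains boundary vertices. Therefore $A$ meets the boundary, i.e.\ the maximum of $r_{\mathscr C}/r_{{\mathscr C}^*}$ is attained there; applying the same argument to $u-u^*$ at its minimum finishes the proof.

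The analytic content of this argument is essentially the one-line comparison built from~\eqref{eqFgen} and the monotonicity of $f_\theta$, and it needs no estimates. The step I expect to require the most care is the last one: the purely combinatorial verification that the set of interior maximizers of $u-u^*$ cannot be separated from the boundary, which is where one uses that $G$ is connected and has a nonempty boundary, as guaranteed by the hypotheses on the b-quad-graph.
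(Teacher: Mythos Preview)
Your argument is correct and is the standard proof of this comparison maximum principle: pass to $u-u^*=\log(r_{\mathscr C}/r_{{\mathscr C}^*})$, use~\eqref{eqFgen} at an interior maximizer together with the strict monotonicity of $f_\theta$ from Lemma~\ref{lemPropf}(1) to force equality on all incident edges, and then propagate by connectedness. The paper does not actually supply its own proof here---it just cites \cite[Lemma~2.1]{He99}---so there is nothing substantive to compare against, but your argument is precisely the one that reference contains.
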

A proof can be found in~\cite[Lemma~2.1]{He99}.
If there exists an
isoradial planar circle pattern for $G$ and $\alpha$, the usual maximum
principle for the radius function follows by taking $r_{{\mathscr
 C}^*}\equiv 1$.

\begin{theorem}[Dirichlet Principle]\label{theoDirichlet}
  Let $\mathscr D$ be a finite simply connected b-quad-graph with associated
  graph $G$ and let $\alpha$ be an admissible labelling. 
  
  Let $r:V_\partial(G)\to(0,\infty)$ be some positive function on the
  boundary vertices of $G$. Then $r$ can be extended to $V(G)$ in such
  a way that equation~\eqref{eqFgen} holds at every interior vertex $z\in
  V_{int}(G)$ if and only if
there exists any circle pattern for   $G$ and $\alpha$.
  If it exists, the extension is unique.
\end{theorem}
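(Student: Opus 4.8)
The plan is to extract uniqueness and one half of the equivalence directly from Proposition~\ref{propRadius} and the Maximum Principle, and to reduce the existence half to a properness statement in which the hypothesis ``a circle pattern for $G$ and $\alpha$ exists'' is invoked exactly once, to provide a comparison radius function. For the uniqueness and the implication ``$r$ extendable $\Rightarrow$ a circle pattern exists'': if $\hat r$ extends $r$ to $V(G)$ and satisfies \eqref{eqFgen} at every interior vertex then, $\mathscr D$ being simply connected, the converse part of Proposition~\ref{propRadius} yields a planar circle pattern for $G$ and $\alpha$ with radius function $\hat r$; and if $\hat r_1,\hat r_2$ are two such extensions of the same $r$, applying this to each gives two circle patterns for $G$ and $\alpha$, whence Lemma~\ref{lemMaxPrinzip} applied to $\hat r_1/\hat r_2$ shows that the maximum and minimum of this quotient over $V(G)$ are attained on $V_\partial(G)$, where it equals $1$, so $\hat r_1\equiv\hat r_2$.

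For the substantial direction assume a circle pattern for $G$ and $\alpha$ exists and fix its radius function $r_0:V(G)\to(0,\infty)$ (whose boundary values need not coincide with $r$). Let $\mathcal S\subset(0,\infty)^{V(G)}$ be the set of $\hat r$ satisfying \eqref{eqFgen} at every $z\in V_{int}(G)$; it is closed, being the zero set of a smooth map into $\R^{V_{int}(G)}$, nonempty since $r_0\in\mathcal S$, and the restriction map $\Phi:\mathcal S\to(0,\infty)^{V_\partial(G)}$ is injective by the uniqueness above. Moreover $\Phi$ is a local diffeomorphism: by Lemma~\ref{lemPropf}(1) and the linearization recalled in Remark~\ref{remInterLap}, the differential of the left-hand sides of \eqref{eqFgen} with respect to the interior radii is the discrete Laplacian $v\mapsto\sum_{[z,z_0]}2f'_{\alpha([z,z_0])}(\log r(z)-\log r(z_0))(v(z)-v(z_0))$ at the vertices $z_0\in V_{int}(G)$, with strictly positive conductances and $v$ vanishing on $V_\partial(G)$, which is invertible, so the implicit function theorem makes $\mathcal S$ a smooth submanifold and $\Phi$ a local diffeomorphism onto an open subset of $(0,\infty)^{V_\partial(G)}$.

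The remaining, crucial point is that $\Phi$ is proper, and this is where the hypothesis is used. Given a compact $K\subset(0,\infty)^{V_\partial(G)}$, choose $0<m\le M$ with $m\,r_0(z)\le s(z)\le M\,r_0(z)$ for all $s\in K$ and $z\in V_\partial(G)$; then for each $\hat r\in\Phi^{-1}(K)$, Lemma~\ref{lemMaxPrinzip} applied to $\hat r/r_0$ forces $m\,r_0(z)\le\hat r(z)\le M\,r_0(z)$ for \emph{every} $z\in V(G)$, so $\Phi^{-1}(K)$ is closed and bounded away from $0$ and $\infty$, hence compact. A proper local diffeomorphism into the connected, locally compact Hausdorff space $(0,\infty)^{V_\partial(G)}$ has open and closed image, which is therefore everything since $\mathcal S\neq\emptyset$; so $\Phi$ is surjective --- precisely the asserted extendability of $r$ --- and, being injective, a diffeomorphism.

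I expect the a priori two-sided bound on the radii to be the main obstacle: it is exactly what one loses in the absence of a circle pattern, matching the ``only if'' half of the statement. Alternatively, existence can be obtained variationally, by minimizing over $\{\hat r:\hat r|_{V_\partial(G)}=r\}$ the convex functional whose Euler--Lagrange equations are \eqref{eqFgen}, with comparison with $r_0$ supplying coercivity, or by a continuity method deforming $r_0|_{V_\partial(G)}$ to $r$ and using the same estimate to keep the set of attainable boundary values closed.
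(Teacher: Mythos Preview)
Your proof is correct and takes a genuinely different route from the paper. The paper uses a Perron-type argument: it calls a function $\kappa:V(G)\to(0,\infty)$ \emph{subharmonic} if the left-hand side of \eqref{eqFgen} is nonnegative at every interior vertex, exhibits an explicit subharmonic function with the prescribed boundary values by setting $\kappa_1=r$ on $V_\partial(G)$ and $\kappa_1=bR$ on $V_{int}(G)$ with $b=\min_{V_\partial(G)}r/R$ (here $R$ is your $r_0$), and then takes $r^*$ to be the pointwise supremum of all subharmonic functions agreeing with $r$ on the boundary; a one-sided version of the Maximum Principle bounds $r^*$ above by $(\max_{V_\partial(G)} r/R)\cdot R$, and one checks directly that $r^*$ satisfies \eqref{eqFgen}. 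Your continuity argument packages the very same two-sided a priori bound (the Maximum Principle against $r_0$) as properness of the boundary-restriction map $\Phi$, and couples it with local surjectivity obtained from the implicit function theorem and the invertibility of the linearized (positive-weight) Laplacian with Dirichlet conditions. The Perron method is more elementary and gives the solution constructively as an upper envelope; your approach uses slightly heavier machinery but makes the role of the hypothesis---furnishing a comparison function and hence compactness---especially transparent, and the variational alternative you mention at the end is in fact the route most often taken in the circle-packing literature.
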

By lack of a good reference we include a proof.
\begin{proof} The only if part follows directly from the second part of
Proposition~\ref{propRadius}.
  
  To show the if part, assume that there exists a circle pattern for $G$ and
  $\alpha$ with radius function $R:V(G)\to(0,\infty)$. 
   A function $\kappa:V(G)\to(0,\infty)$ which satisfies the inequality
  \begin{equation}
     \left(\sum_{[z,z_0]\in E(G)} 
        f_{\alpha([z,z_0])}(\log \kappa(z)-\log \kappa(z_0))\right) -\pi\geq 0  
  \end{equation}
at every interior vertex $z\in V_{int}(G)$ will be called {\em
  subharmonic} in G. Let $b$ be the minimum of the quotient $r/R$ on
$V_\partial(G)$ and let $\kappa_1$ be equal to $r$ on $V_\partial(G)$
and to $bR$ on $V_{int}(G)$. Then $\kappa_1$ is clearly subharmonic.
The maximum of $\kappa_1/R$ is attained
at the boundary, which is a simple generalization of the Maximum
Principle~\ref{lemMaxPrinzip}.
Let $r^*$ be the supremum of all subharmonic
functions on $G$ that coincide with $r$ on $V_\partial(G)$. Thus $r^*$ is
bounded from above by the maximum 
of $r/R$ on $V_\partial(G)$ which is finite. One easily checks
that $r^*$ satisfies condition~\eqref{eqFgen}.

 The uniqueness claim follows directly from the Maximum
Principle~\ref{lemMaxPrinzip}.
\end{proof}

\subsection{The angle function and relations to the radius function}

Similarly as for polar coordinates of the complex plane, 
a suitably defined angle function can be interpreted as a ``dual'' to the
radius function. We focus on connections between these functions and on
a characterization for angle functions of circle patterns similar to
Proposition~\ref{propRadius}.

Let $G$ be a finite graph associated to a b-quad-graph $\mathscr D$
and let $\alpha$ be an admissible labelling.
Denote by $\vec{E}(\mathscr D)$ the set of
oriented edges, where each edge of $E(\mathscr D)$ is replaced by two oriented
edges of opposite orientation.
Let ${\mathscr C}$ be a planar circle pattern for $\mathscr D$ and $\alpha$.
Define an {\em angle function} $\varphi_{\mathscr C}=\varphi$
on $\vec{E}(\mathscr D)$ as follows.
Denote by $p(w)$ the point of $\mathscr C$ corresponding to the vertex $w\in
V(\mathscr D)$. For $\vec{e}=\overrightarrow{zv}\in \vec{E}(\mathscr D)$
set $\varphi(\vec{e})=\arg (p(v)-p(z))$ to be the argument of $p(v)-p(z)$, that
is the angle between the positively oriented real axis and the vector
$p(v)-p(z)$. As this argument is only unique up to addition of multiples of
$2\pi$, we will mostly consider $\varphi\in\R/(2\pi\Z)$.
But note that $e^{i\varphi(\vec{e})}$ is well defined. Furthermore
\begin{equation}\label{eqdiffedges}
\varphi(\vec{e})-\varphi(-\vec{e})= \pi \pmod{2\pi}.
\end{equation}

Our choice of the angle function leads to the following 
connections between radius function $r$ and angle function $\varphi$ for a
planar circle pattern for $\mathscr D$ and $\alpha$.

Let $f\in F({\mathscr D})$ be a face of ${\mathscr D}$.
Without loss of generality, we assume that the notation for the vertices and
edges of $f$ is taken from
Figure~\ref{anglesFig} (left). More precisely, the white vertices $z_-$ and
$z_+$ of $f$ and the black vertices $v_-$ and $v_+$ are labelled
such that the points $z_-, v_-,z_+,v_+$ appear in this cyclical order using the
 counterclockwise orientation of $f$. Furthermore the edges are labelled
such that $\vec{e}_1=\overrightarrow{z_-v_+}$,
$\vec{e}_2=\overrightarrow{z_-v_-}$, $-\vec{e}_3=\overrightarrow{v_-z_+}$,
$-\vec{e}_4=\overrightarrow{v_+z_+}$.
Then  the following equations hold.
  \begin{alignat}{3}
    &\varphi(\vec{e}_1)-\varphi(-\vec{e}_3) &&= \alpha(f)-\pi +
    2f_{\alpha(f)}(\log r(z_+)-\log r(z_-))& \pmod{2\pi} \label{eqCR1} \\
    &\varphi(-\vec{e}_4)-\varphi(\vec{e}_2) &&=  \alpha(f)-\pi +
    2f_{\alpha(f)}(\log r(z_+)-\log r(z_-))& \pmod{2\pi}  \\
    &\varphi(\vec{e}_1)-\varphi(\vec{e}_2) &&=
    2f_{\alpha(f)}(\log r(z_+)-\log r(z_-)) &\pmod{2\pi} \label{eqCR3} \\
    &\varphi(-\vec{e}_4)-\varphi(-\vec{e}_3) &&=
    -2f_{\alpha(f)}(\log r(z_-)-\log r(z_+)) &\pmod{2\pi} \\
    &\varphi(-\vec{e}_3)-\varphi(\vec{e}_2) &&= \pi - \alpha(f)
    &\pmod{2\pi} \label{eqCR5} \\
    &\varphi(-\vec{e}_4)-\varphi(\vec{e}_1) &&=\alpha(f) -\pi
    &\pmod{2\pi} \label{eqCR6}
  \end{alignat}
Additionally, the angle function $\varphi$ satisfies the following
\begin{lemma}[Monotonicity condition]\label{MonoCond}
  Let $z\in V(G)$ be a white vertex and let
  $e_1,\dots,e_n$ be the sequence of all incident edges in
  $E(\mathscr D)$ which are cyclically ordered respecting the
counterclockwise orientation of the circle $C_z$. Then the values of
$\varphi\in\R$ can be changed by suitably adding
  multiples of $2\pi$ such that $\varphi(\vec{e}_j)$ is an increasing
  function of the index $j$ and if $z$ is an interior vertex, then
  also $\varphi(\vec{e}_j)-\varphi(\vec{e}_1)<2\pi$
  for all $j=1,\dots, n$. 
\end{lemma}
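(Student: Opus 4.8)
The plan is to track the angle function around the circle $C_z$ edge by edge, using the explicit formulas \eqref{eqCR3} and \eqref{eqCR5} (or their analogues) that express the jump $\varphi(\vec e_{j+1})-\varphi(\vec e_j)$ in terms of the geometry of the single kite $K_{f_j}$ sandwiched between consecutive edges $e_j,e_{j+1}$. Concretely, fix the cyclic order $e_1,\dots,e_n$ of the edges at $z$ induced by the counterclockwise orientation of $C_z$, let $f_j$ be the face of $\mathscr D$ (equivalently the kite $K_{f_j}$) containing both $e_j$ and $e_{j+1}$ on its boundary, and let $z_j$ be the white vertex of $f_j$ opposite to $z$. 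The point $p(z)$ is the apex of the kite $K_{f_j}$ at which the two edges of length $r(z)$ meet, and the angle of that kite at $p(z)$ is exactly $2f_{\alpha(f_j)}(\log r(z_j)-\log r(z))$, by the remark following Proposition~\ref{propRadius}. Since $\vec e_j$ and $\vec e_{j+1}$ are the two edge-vectors of this kite emanating from $p(z)$, and since going from $e_j$ to $e_{j+1}$ respects the counterclockwise orientation of $C_z$, matching with \eqref{eqCR3} gives
\[
  \varphi(\vec e_{j+1})-\varphi(\vec e_j)\;\equiv\;
  2f_{\alpha(f_j)}\bigl(\log r(z_j)-\log r(z)\bigr)\pmod{2\pi}.
\]

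First I would make this last displayed congruence into an honest equality: by part~(2) of Lemma~\ref{lemPropf}, $0<2f_\theta(x)<2\pi$ strictly (indeed $f_\theta(x)+f_\theta(-x)=\pi-\theta<\pi$ forces $f_\theta(x)<\pi$, and $f_\theta>0$), so there is a unique representative of the jump in the open interval $(0,2\pi)$, namely $2f_{\alpha(f_j)}(\log r(z_j)-\log r(z))$ itself. Therefore, choosing the value of $\varphi(\vec e_1)\in\R$ arbitrarily and then defining $\varphi(\vec e_{j+1}):=\varphi(\vec e_j)+2f_{\alpha(f_j)}(\log r(z_j)-\log r(z))$ recursively for $j=1,\dots,n-1$, we obtain real values, consistent with the original $\varphi$ modulo $2\pi$, which are strictly increasing in $j$ because every increment is positive. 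This proves the first assertion, and it works whether or not $z$ is interior.

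For the second assertion, suppose $z\in V_{int}(G)$. Then the kites $K_{f_1},\dots,K_{f_n}$ close up around $p(z)$: by condition~(3) of Definition~\ref{defcircpattern} their union is a closed disc containing $p(z)$ in its interior, so the angles of these kites at $p(z)$ sum to exactly $2\pi$. But that sum is precisely $\sum_{j=1}^n 2f_{\alpha(f_j)}(\log r(z_j)-\log r(z))$, which is the closing condition \eqref{eqFgen} at $z$ (multiplied by $2$). Hence with the choice above
\[
  \varphi(\vec e_n)-\varphi(\vec e_1)\;=\;\sum_{j=1}^{n-1}2f_{\alpha(f_j)}\bigl(\log r(z_j)-\log r(z)\bigr)\;<\;\sum_{j=1}^{n}2f_{\alpha(f_j)}\bigl(\log r(z_j)-\log r(z)\bigr)\;=\;2\pi,
\]
the strict inequality because the omitted term $2f_{\alpha(f_n)}(\log r(z_n)-\log r(z))$ is strictly positive. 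Since the partial sums are increasing, $\varphi(\vec e_j)-\varphi(\vec e_1)<2\pi$ for every $j=1,\dots,n$, as claimed.

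The only real subtlety — and the step I would be most careful about — is the bookkeeping that identifies, for a given face $f_j$, which of the six edge-pairs in \eqref{eqCR1}--\eqref{eqCR6} corresponds to "the two edges incident to $z$ at $p(z)$ traversed in the counterclockwise order of $C_z$", and in particular checking that the sign comes out so that the jump is $+2f_{\alpha(f_j)}(\log r(z_j)-\log r(z))$ rather than its negative or $2\pi$ minus it. This is purely a matter of unwinding the labelling conventions fixed in Figure~\ref{anglesFig} and the orientation conventions of Definition~\ref{defcircpattern}; once the convention is pinned down, formula \eqref{eqCR3} (for the face on one side) together with the companion formula \eqref{eqCR5}-type relation (for orienting the kite correctly) delivers exactly the stated increment, and the rest is the elementary summation argument above.
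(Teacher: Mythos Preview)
Your proof is correct and is the natural geometric argument. The paper itself does not supply a proof of this lemma; it states the Monotonicity condition and moves on, treating it as an immediate consequence of the kite geometry. Your write-up makes that geometry explicit: the increment $\varphi(\vec e_{j+1})-\varphi(\vec e_j)$ is precisely the kite angle $2f_{\alpha(f_j)}(\log r(z_j)-\log r(z))\in(0,2\pi)$ at $p(z)$, and for interior $z$ these angles sum to $2\pi$ by the closing condition~\eqref{eqFgen}, so the partial sums stay strictly below $2\pi$. One small sharpening: from $f_\theta(x)+f_\theta(-x)=\pi-\theta$ you actually get $2f_\theta(x)<2(\pi-\theta)$, which is stronger than the $<2\pi$ you use and in fact matches condition~\eqref{eqanglefacecond} later in the paper. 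Your closing caveat about checking the sign convention against Figure~\ref{anglesFig} is appropriate; equation~\eqref{eqCR3} is indeed the relevant one, with $z=z_-$ and the counterclockwise step from $\vec e_2$ to $\vec e_1$ sweeping out the positive angle $2f_{\alpha(f)}(\log r(z_+)-\log r(z_-))$.
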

\begin{figure}[tb]
\begin{center}
\setlength{\unitlength}{0.9cm}
\begin{picture}(6,3)(-0.5,0)
\put(0.5,1.5){\circle{0.5}}
\put(4.5,1.5){\circle{0.5}}
\put(2.5,2.5){\circle*{0.5}}
\put(2.5,0.5){\circle*{0.5}}
\thicklines
\put(0.7,1.65){\vector(2,1){1}}
\put(0.7,1.65){\line(2,1){1.54}}
\thicklines
\put(2.75,2.4){\vector(2,-1){1}}
\put(2.75,2.4){\line(2,-1){1.54}}
\thicklines
\put(0.7,1.35){\vector(2,-1){1}}
\put(0.7,1.35){\line(2,-1){1.54}}
\thicklines
\put(2.75,0.57){\vector(2,1){1}}
\put(2.75,0.57){\line(2,1){1.55}}
\put(2.4,1.95){$\alpha$}
\put(2.4,0.9){$\alpha$}
\put(1,1.4){$\beta_-$}
\put(3.6,1.4){$\beta_+$}
\put(-0.4,1.4){$z_-$}
\put(4.95,1.4){$z_+$}
\put(2.4,0){$v_-$}
\put(2.4,2.9){$v_+$}
\put(1.3,2.3){$\vec{e}_1$}
\put(1.3,0.5){$\vec{e}_2$}
\put(3.4,2.3){$-\vec{e}_4$}
\put(3.4,0.5){$-\vec{e}_3$}
\end{picture}
\hspace{1cm}
\input{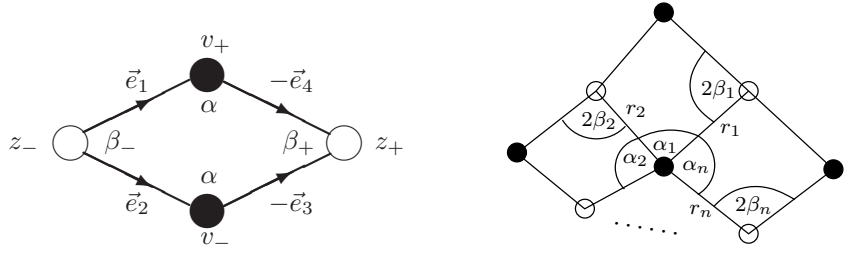}
\end{center}
\caption{
{\em Left:} A face of $\mathscr D$ with oriented edges.
 {\em Right:} An interior intersection point with its neighboring faces.
}\label{anglesFig} 
\end{figure}

The following theorem is useful to compare two circle patterns with the same
combinatorics and intersection angles.

\begin{theorem}\label{theoCompare}
  Let $\mathscr D$ be a b-quad-graph and let $\alpha$ be an admissible
  labelling. 

Let  ${\mathscr C}$ and $\hat{\mathscr C}$ be two planar circle 
  patterns for $\mathscr D$ and $\alpha$ with radius functions
  $r_{{\mathscr C}}=r$ and $r_{\hat{\mathscr C}}=\hat{r}$ and
  angle functions $\varphi_{\mathscr C} =\varphi$ and
  $\varphi_{\hat{\mathscr C}} =\hat{\varphi}$ respectively.
Then the difference $\hat{\varphi}-\varphi$
gives rise to a function $\delta:V(G^*)\to \R$ such that
the following
condition holds on every face $f\in F(\mathscr D)$. 
\begin{equation}\label{eqdeltaw}
2f_{\alpha(f)}\left(\log\left({\textstyle
      \frac{w(z_+)}{w(z_-)}}\right)+\log\left({\textstyle
  \frac{r(z_+)}{r(z_-)}}\right)\right)  -
2f_{\alpha(f)}\left(\log\left({\textstyle \frac{r(z_+)}{r(z_-)}}\right)\right) =
\delta(v_+)-\delta(v_-) 
\end{equation}
Here we have defined $w:V(G)\to\R$, $w(z)=\hat{r}(z)/r(z)$ and the
notation is taken from Figure~\ref{anglesFig}~(left) as above.

  Conversely, assume that ${\mathscr C}$ is a planar circle 
  pattern for $\mathscr D$ and $\alpha$ with radius functions
  $r_{{\mathscr C}}=r$  and
  angle function $\varphi_{\mathscr C} =\varphi$. Let $\delta:V(G^*)\to \R$ and
  $w:V(G)\to\R_+$ be two functions which
  satisfy equation~\eqref{eqdeltaw} for every face $f\in F(\mathscr
  D)$. Then $(rw)$ and $(\varphi+\delta)$ are 
  the radius and angle function of a planar circle 
  pattern for $\mathscr D$ and $\alpha$.
 This circle pattern is unique up to translation.
\end{theorem}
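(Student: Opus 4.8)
The plan is to read everything off from the six ``Cauchy--Riemann'' relations \eqref{eqCR1}--\eqref{eqCR6} on a face, together with the closing condition \eqref{eqFgen}: the difference of two angle functions is obtained by a descent, and the converse by reassembling the kites from the prescribed data.

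For the direct part I would first show that $\hat\varphi(\overrightarrow{zv})-\varphi(\overrightarrow{zv})$, for a directed edge with white endpoint $z$ and black endpoint $v$, depends only on $v$. If $\overrightarrow{zv}$ and $\overrightarrow{z'v}$ lie on a common face, then \eqref{eqCR5} or \eqref{eqCR6} expresses $\varphi(\overrightarrow{z'v})-\varphi(\overrightarrow{zv})$ in terms of $\alpha$ alone, so the same identity holds for $\hat\varphi$, and subtracting gives $\hat\varphi(\overrightarrow{z'v})-\varphi(\overrightarrow{z'v})\equiv\hat\varphi(\overrightarrow{zv})-\varphi(\overrightarrow{zv})\pmod{2\pi}$; running over the faces at $v$ and using \eqref{eqdiffedges} to reverse orientations yields a well-defined $\delta(v)\in\R/2\pi\Z$. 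Applying \eqref{eqCR3} to $\mathscr{C}$ and $\hat{\mathscr{C}}$ on a face $f$ and subtracting, with $w=\hat r/r$, gives precisely \eqref{eqdeltaw} modulo $2\pi$. It remains to lift $\delta$ to a real-valued function: the left-hand side $L(f)$ of \eqref{eqdeltaw} is an honest real number, so I would fix a base black vertex and define $\delta$ by summing the $L(f)$ along chains of faces in $G^*$. This is path-independent because, summing \eqref{eqdeltaw} over the faces incident to an interior white vertex $z$ and using the functional equation of Lemma~\ref{lemPropf}, the $\delta$-contributions telescope and one is left with $\sum 2f_\alpha(\log\hat r(z')-\log\hat r(z))-\sum 2f_\alpha(\log r(z')-\log r(z))=2\pi-2\pi=0$ by \eqref{eqFgen} applied to $\hat r$ and to $r$ (as for Proposition~\ref{propRadius}, one takes $\mathscr{D}$ simply connected so that these flowers generate the cycles of $G^*$).

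For the converse, set $\hat r:=rw$ and $\hat\varphi(\vec e):=\varphi(\vec e)+\delta(v)$ with $v$ the black endpoint of $\vec e$ (consistent for both orientations by \eqref{eqdiffedges}). A direct check then shows that $(\hat r,\hat\varphi)$ satisfies all of \eqref{eqCR1}--\eqref{eqCR6} on every face: in \eqref{eqCR5}--\eqref{eqCR6} the two edges involved share their black endpoint, so the $\delta$-terms cancel, while in the other four relations the $\delta$-terms contribute $\delta(v_+)-\delta(v_-)=L(f)$, which by \eqref{eqdeltaw} is exactly the correction turning $f_\alpha(\log r(z_+)-\log r(z_-))$ into $f_\alpha(\log\hat r(z_+)-\log\hat r(z_-))$. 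Hence, for each face, the kite of two circles of radii $\hat r(z_\pm)$ meeting at angle $\alpha(f)$, positioned by $\hat p(z_-)$ and $\hat\varphi(\overrightarrow{z_-v_+})$, has all its edge directions prescribed by $\hat\varphi$, so the would-be edge vectors $\hat r(z)e^{i\hat\varphi(\overrightarrow{zv})}$ close up around the face; gluing these kites along shared edges produces, up to a single translation, a map $\hat p:V(\mathscr{D})\to\C$. The circles of radius $\hat r(z)$ about $\hat p(z)$, together with these kites, satisfy Definition~\ref{defcircpattern}: (1) and (2) are immediate, and for (3) the kite angles around an interior black vertex equal $\alpha(f)$ and sum to $2\pi$ by admissibility \eqref{condalpha}, while around an interior white vertex they equal $2f_\alpha(\log\hat r(z')-\log\hat r(z))$ and sum to $2\pi$ because summing \eqref{eqdeltaw} over the incident faces (again using Lemma~\ref{lemPropf}) forces $\hat r$ to satisfy \eqref{eqFgen} there. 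By construction $\hat r$ and $\hat\varphi$ are the radius and angle function of this pattern, and since they determine all edge vectors, $\hat p$ is unique up to translation.

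The substance of the argument is the bookkeeping that makes \eqref{eqdeltaw} coincide with the closing condition for the new kites, and, in the converse, the verification of condition~(3) of Definition~\ref{defcircpattern}: the one real point there is that $\hat r=rw$ again solves the nonlinear Laplace equation \eqref{eqFgen} at interior white vertices, which is the only place where \eqref{eqdeltaw} is used globally (via the flower-telescoping) rather than face by face. Everything else is keeping track of the $2\pi$-ambiguities of the angle functions, for which \eqref{eqdiffedges}, the Monotonicity condition (Lemma~\ref{MonoCond}) and the functional equation of Lemma~\ref{lemPropf} are the tools.
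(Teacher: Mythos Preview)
Your argument is correct and follows the same path as the paper's proof. For the direct part you use \eqref{eqCR5}--\eqref{eqCR6} to see that $\hat\varphi-\varphi$ descends to black vertices, then lift $\delta$ to $\R$ via the face relation \eqref{eqdeltaw}, checking path-independence by the flower-telescoping that reduces to \eqref{eqFgen} for both $r$ and $\hat r$; this is exactly what the paper does. In the converse the paper is slightly more economical: rather than verifying \eqref{eqCR1}--\eqref{eqCR6} for $(\hat r,\hat\varphi)$ and assembling the kites by hand, it first observes (by the same telescoping) that $\hat r=rw$ satisfies \eqref{eqFgen} and then invokes Proposition~\ref{propRadius} as a black box to produce the circle pattern, fixing the rotational freedom at one edge and reading off that the resulting angle function equals $\varphi+\delta$ from \eqref{eqdeltaw}. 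Your direct construction and the paper's shortcut rest on the same key point, so the difference is one of packaging rather than substance.
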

\begin{proof}
If ${\mathscr C}$ and $\hat{\mathscr C}$ are two planar circle
  patterns for $\mathscr D$ and $\alpha$,
equations~\eqref{eqdiffedges}, \eqref{eqCR5}, and \eqref{eqCR6} imply
that the difference $\hat{\varphi}-\varphi$ is constant for all edges
incident to any fixed black vertex $v\in V(G^*)$. Therefore
$\delta\pmod{2\pi}$ is well defined on the intersection points and
encodes the relative rotation of the star of edges at $v$. Also
equation~\eqref{eqdeltaw} holds modulo $2\pi$.

To obtain a
function $\delta$ with values in $\R$, fix $\delta(v_0)\in[0,2\pi)$
for one arbitrary vertex $v_0\in V(G^*)$ (for each connected component of
$G^*$). Define the values of $\delta$ for all
incident vertices in $G^*$ by equation~\eqref{eqdeltaw}.
Continue this construction
until a value has been assigned to all vertices.
This proceedure leads to a well-defined function,
as by Proposition~\ref{propRadius} the sum of the left hand side of
equation~\eqref{eqdeltaw} is zero for
simple closed paths in $E(G^*)$ around a white vertex.

To prove the converse claim, observe that if
 $r$ is a radius function, equation~\eqref{eqdeltaw} implies
  that $(wr)$ fullfills equation~\eqref{eqFgen}. 
By Proposition \ref{propRadius} there is a circle pattern with radius
function $(wr)$.
Adjust the rotational freedom at one edge according to $(\varphi+\delta)$.
Equation~\eqref{eqdeltaw} implies that $(\varphi+\delta)$ is indeed the angle
function of this circle pattern.
\end{proof}

The preceeding theorem motivates the definition of a comparison
function for two circle patterns with the same combinatorics and intersection
angles.

Let $G$ be a graph associated to a b-quad-graph $\mathscr
      D$ and let $\alpha$ be an admissible labelling. 
      Suppose that ${\mathscr C}_1$ and ${\mathscr C}_2$ are planar circle
      patterns for ${\mathscr D}$ and $\alpha$ with radius functions
      $r_{{\mathscr C}_1}$ and $r_{{\mathscr C}_2}$ and angle functions
$\varphi_{{\mathscr C}_1}$ and $\varphi_{{\mathscr C}_2}$ respectively. Let
$\delta:V(G^*)\to \R$ be a function corresponding to
$\varphi_{{\mathscr C}_2} -\varphi_{{\mathscr C}_1}$ as in
Lemma~\ref{theoCompare}.
Define a {\em comparison function} $w:V({\mathscr D})\to \C$ by
\begin{equation}\label{eqdefw}
\begin{cases} w(y)=r_{{\mathscr C}_2}(y)/r_{{\mathscr C}_1}(y) &\text{for }
y\in V(G),\\
w(x)=\text{e}^{i\delta(x)}\in \Sp^1 & \text{for } x\in
V(G^*). \end{cases}
\end{equation}
Note that $w(y)$ is the scaling factor of the circle corresponding to $y\in
V(G)$ when changing from
the circle pattern ${\mathscr C}_1$ to ${\mathscr C}_2$.
$w(x)$ gives the rotation of the edge-star at $x\in V(G^*)$. 
Furthermore, $w$ satisfies the following {\em Hirota Equation} for
all faces $f\in F({\mathscr D})$.
\begin{equation}\label{eqw}
  w(x_0)w(y_0)a_0 -w(x_1)w(y_0)a_1
  -w(x_1)w(y_1)a_0 +w(x_0)w(y_1)a_1 =0
\end{equation}
Here $x_0,x_1\in V(G^*)$ and $y_0,y_1\in V(G)$ are the black and white
vertices incident to $f$ and $a_0=x_0-y_0$ and $a_1=x_1-y_0$ are the
directed edges. Equation~\eqref{eqw} is the
closing condition for the kite of ${\mathscr C}_2$ which corresponds to the
face $f$.

Angle functions associated to planar circle patterns can be characterized in a
similar way as radius functions are qualified
in Proposition~\ref{propRadius}.

\begin{proposition}\label{propangle}
Let $\mathscr D$ be a b-quad-graph with associated graphs 
  $G$ and $G^*$ and
  let $\alpha$ be an admissible labelling.

      Suppose ${\mathscr C}$ is a planar circle
      pattern for $\mathscr D$ and $\alpha$ with angle function
      $\varphi=\varphi_{\mathscr C}$. Then $\varphi$ satisfies
equations~\eqref{eqdiffedges}, \eqref{eqCR5}, \eqref{eqCR6}, the
      Monotonicity condition~\ref{MonoCond} at every white
      vertex of $\mathscr D$, and the following two conditions.
      \begin{enumerate}[(i)]
        \item
      Let $f$ be a face of $\mathscr D$ and let $e_1$ and $e_2$ be two edges
      incident to $f$ and to the same white vertex and assume that $e_1$ and
$e_2$ are enumerated in clockwise order as in Figure~\ref{anglesFig} (left).
Define an angle $\beta\in(0,\pi)$ by
       \[ 2\beta=\varphi(\vec{e_1})-\varphi(\vec{e_{2}})
      \pmod{2\pi},\]
where the orientation of the edges is chosen such that the vectors point from
a white vertex to a black vertex, as in Figure~\ref{faceFig} (left).
     Then
      \begin{equation}\label{eqanglefacecond}
        \beta+\alpha(f)<\pi.
      \end{equation}
      \item For an interior black vertex $v\in V_{int}(G^*)$, denote by
      $e_1,\dots, e_{n},e_{n+1}=e_1$ all incident edges of $\mathscr D$ in
      counterclockwise order and by $f_j$ the face of $\mathscr D$
      incident to $e_{j}$ and 
      $e_{j+1}$. Denote by $e_j^*$ ($j=1,\dots,n$) the edge incident to
      $e_j$ and $f_j$ which is not incident to $v$.   For
      $j=1,\dots,n$ define as above $\beta_j\in(0,\pi)$ by  
      $ 2\beta_j=\varphi(\vec{e_j})-\varphi(\vec{e_{j}}^*)
      \pmod{2\pi}$, where we choose the same orientation of the edges from
white to black vertices as above.
      Then
      \begin{gather}
        \sum_{j=1}^n f_{\alpha(f_j)}^{-1}(\beta_j) =
        0. \label{eqintersectionPoint} 
      \end{gather}
      \end{enumerate}

Conversely, suppose that $\mathscr D$ is simply connected and
      that $\varphi:\vec{E}({\mathscr D})\to\R/(2\pi\Z)$ satisfies
      equations~\eqref{eqdiffedges}, \eqref{eqCR5}, \eqref{eqCR6}, the 
      Monotonicity condition~\ref{MonoCond},
condition~\eqref{eqanglefacecond} at every white
      vertex of $\mathscr D$, and condition~\eqref{eqintersectionPoint} at every
interior black vertex.
      Then there is a planar circle pattern
      for $\mathscr D$ and $\alpha$ with angle function $\varphi$. 
This pattern is unique up
      to scaling and translation.
\end{proposition}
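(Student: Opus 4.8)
The plan is to prove the two implications separately: necessity is essentially a transcription of kite geometry into the language of $\varphi$, while for sufficiency one reconstructs a radius function from $\varphi$ and then appeals to Proposition~\ref{propRadius}. For the necessity part, Equations~\eqref{eqdiffedges}, \eqref{eqCR5}, \eqref{eqCR6} and the Monotonicity condition~\ref{MonoCond} are already recorded for every planar circle pattern, so nothing has to be checked there. For~\eqref{eqanglefacecond}, note that $2\beta$ is, by the definition of the angle function, the interior angle of the kite $K_f$ at the white vertex $z$; by the remark following Proposition~\ref{propRadius} this angle equals $2f_{\alpha(f)}(\log r(z')-\log r(z))$, where $z'$ is the other white vertex of $f$, and Lemma~\ref{lemPropf}(3) --- equivalently, the fact that the range of $f_\theta$ is $(0,\pi-\theta)$ --- forces $\beta<\pi-\alpha(f)$. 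For~\eqref{eqintersectionPoint} the same identification gives $2\beta_j=2f_{\alpha(f_j)}(\log r(w_{j+1})-\log r(w_j))$, where $w_1,\dots,w_n,w_{n+1}=w_1$ is the cyclic list of white neighbours of the interior black vertex $v$ (ordered so that $w_j$ is the white endpoint of $e_j$); hence $f_{\alpha(f_j)}^{-1}(\beta_j)=\log r(w_{j+1})-\log r(w_j)$, and the sum telescopes to $0$.

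For sufficiency, assume $\mathscr D$ simply connected and $\varphi$ as in the hypotheses. For each face $f$ with white vertices $z_-,z_+$ (notation of Figure~\ref{anglesFig}) put
\[
\Delta_f:=f_{\alpha(f)}^{-1}\!\left(\tfrac12\bigl(\varphi(\overrightarrow{z_-v_+})-\varphi(\overrightarrow{z_-v_-})\bigr)\right),
\]
which is well defined by~\eqref{eqanglefacecond}. Using~\eqref{eqCR5} and~\eqref{eqCR6} one finds that the interior kite angles of $K_f$ at its two black vertices each equal $\alpha(f)$, so the half-angles at $z_-$ and $z_+$ sum to $\pi-\alpha(f)$; by the functional equation Lemma~\ref{lemPropf}(2) this makes $\Delta_f$ antisymmetric under interchanging $z_-$ and $z_+$, i.e.\ $\Delta_f$ is a consistent prescription of the increment of a prospective function $\log r$ along the $G$-edge dual to $f$. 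Since $\mathscr D$ is simply connected, $G$ is connected and its cycle space is generated by the boundaries of the bounded faces of $G$, which are in bijection with the interior black vertices of $\mathscr D$; the sum of the increments $\Delta_f$ along such a boundary is precisely the left-hand side of~\eqref{eqintersectionPoint}, hence vanishes. Thus the $\Delta_f$ integrate to a function $\log r\colon V(G)\to\R$, unique up to an additive constant --- this is the scaling freedom in the statement.

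One then checks that $r$ satisfies~\eqref{eqFgen} at every interior white vertex $z_0$: by construction its left-hand side equals one half of the sum of the interior kite angles at $z_0$, and the Monotonicity condition (specifically the bound $\varphi(\vec e_n)-\varphi(\vec e_1)<2\pi$ for interior vertices) forces this sum to be exactly $2\pi$. Proposition~\ref{propRadius} now yields a planar circle pattern $\mathscr C$ with radius function $r$, unique up to orientation-preserving isometries of $\C$; let $\varphi_{\mathscr C}$ be its angle function. On a single face $f$, the relations~\eqref{eqdiffedges}, \eqref{eqCR3}, \eqref{eqCR5}, \eqref{eqCR6} already determine all eight values $\varphi_{\mathscr C}(\pm\vec e_i)$ up to a single additive constant depending only on $r$ and $\alpha(f)$ (the remaining equations in~\eqref{eqCR1}--\eqref{eqCR6} being formal consequences of these and Lemma~\ref{lemPropf}(2)). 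The same four relations hold for $\varphi$ --- \eqref{eqCR3} by the very definition of $\Delta_f$ and the others among the hypotheses --- so $\varphi-\varphi_{\mathscr C}$ is constant on each face of $\mathscr D$, and since $\mathscr D$ is connected and neighbouring faces share an edge, $\varphi-\varphi_{\mathscr C}\equiv c\in\R/(2\pi\Z)$ globally. Rotating $\mathscr C$ by $e^{-ic}$ produces a circle pattern whose angle function is $\varphi$. Uniqueness up to scaling and translation follows because~\eqref{eqCR3} together with the injectivity of $f_{\alpha(f)}$ pins any competing radius function down to a positive multiplicative constant, after which prescribing the angle function removes the rotational freedom left by Proposition~\ref{propRadius}.

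The real work is bookkeeping rather than ideas: one must consistently extract from $\varphi$ the \emph{interior} kite angle and never its reflex, fix the sign of $\Delta_f$ so that both the telescoping in the necessity part and the antisymmetry in the sufficiency part come out right, and recognise that the Monotonicity bound $\varphi(\vec e_n)-\varphi(\vec e_1)<2\pi$ is exactly what promotes ``the kite angles around $z_0$ sum to $0\pmod{2\pi}$'' to ``they sum to $2\pi$''. Once these conventions are in place, every step reduces to Proposition~\ref{propRadius}, Lemma~\ref{lemPropf}, and the face relations~\eqref{eqCR1}--\eqref{eqCR6}.
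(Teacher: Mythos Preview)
Your proof is correct and follows essentially the same strategy as the paper: for necessity you identify $2\beta$ with the kite angle at the white vertex and read off~\eqref{eqanglefacecond} from the range of $f_\theta$, then telescope $f_{\alpha_j}^{-1}(\beta_j)=\log r_{j+1}-\log r_j$ for~\eqref{eqintersectionPoint}; for sufficiency you reconstruct $\log r$ from $\varphi$ via $f_\alpha^{-1}$, using~\eqref{eqanglefacecond} for well-definedness and~\eqref{eqintersectionPoint} for consistency around interior black vertices. The only organisational difference is that after obtaining $r$ the paper lays out the kites directly according to $\varphi$ and checks the closing conditions, whereas you invoke Proposition~\ref{propRadius} to produce a pattern with radius function $r$ and then argue separately that its angle function agrees with $\varphi$ up to a global rotation; both routes are equivalent and of comparable length.
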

\begin{proof}
For a given planar circle pattern equations~\eqref{eqCR1}--\eqref{eqCR6} and
Lemma~\ref{MonoCond} hold. 
To show~\eqref{eqanglefacecond},
      consider a kite
      corresponding to a face of $\mathscr D$ as in
Figure~\ref{anglesFig}~(left). Note that $\beta_-=2\beta$ by
equation~\eqref{eqCR3},
$\beta_- +\beta_+ +2\alpha =2\pi$, and
$\beta_-,\beta_+,\alpha>0$.
      Using notation of Figure~\ref{anglesFig} (right)
we also deduce that
       $ f_{\alpha_j}^{-1}(\beta_j)= \log r_{j+1} -\log r_j$ 
      for $j=1,\dots,n$, where we identify $r_1=r_{n+1}$. Now
    \eqref{eqintersectionPoint} follows immediately.

In order to prove the converse claim, we
    construct a radius function $r:V(G)\to(0,\infty)$ and build a circle
pattern corresponding to $r$ and $\varphi$.

    Let $z\in V(G)$ be an interior white vertex. Set
    $r(z)=1$. Consider a neighboring white vertex $z'\in V(G)$ and the
    face $f\in F(\mathscr D)$ incident to $z$ and $z'$. Denote the black
vertices of $\mathscr D$ incident to $z$ and $f$ by $v_1$, $v_2$ such that
$v_1$, $z$, $v_2$, $z'$ appear in counterclockwise order along the boundary of
$f$. Define $\psi_-\in (0,2\pi)$ by
    $\psi_-=\varphi(\overrightarrow{zv_1})-\varphi(\overrightarrow{zv_2})
\pmod{2\pi}$.
Condition~\eqref{eqanglefacecond} implies that
    $r(z'):=r(z)\text{exp}(f_{\alpha(f)}^{-1}(\psi_-/2))$ is well defined
    and positive.
    We proceed in this way until a radius has been
    assigned to all white vertices. Condition~\eqref{eqintersectionPoint}
guarantees that these assignments do not
    lead to 
    different values when turning around a black vertex (see
Figure~\ref{anglesFig} (right) with $\psi=2\beta$). Thus $r$ is
    uniquely determined up to the choice of the initial radius, which
    corresponds to a global scaling.
For each face $f\in F(\mathscr D)$ construct a kite 
with lengths $r(z_+),r(z_-)$ of the edges incident to the white
  vertices $z_+,z_-$ of $f$ respectively and angle $\alpha=\alpha(f)$.
  Lay out one kite fixing the rotational freedom according
  to $\varphi$. Successively add all other kites, respecting
  the combinatorics of $\mathscr D$.
By construction and assumptions,
at every interior vertex the
  angles of the kites having this vertex in common add up to $2\pi$.
Thus we obtain a circle pattern with angle function $\varphi$.
\end{proof}

\section[$C^1$-convergence for isoradial circle patterns]{$C^1$-convergence with
Dirichlet or Neumann \\ boundary conditions}
\label{secDirichlet}

In this section we state and prove our main results on convergence for
isoradial circle patterns. We begin with Dirichlet boundary conditions, that is
we first focus on the radius function with given boundary values.

\begin{theorem}\label{theoConvC1}
Let $D\subset\C$ be a simply connected bounded domain, and let
$W\subset\C$ be open such that $W$ contains the closure $\overline{D}$ of $D$.
Let $g:W\to\C$ be a
locally injective holomorphic function. Assume, for convenience, that
$0\in D$. 

For $n\in\N$ let ${\mathscr D}_n$ be a b-quad-graph with associated
graphs $G_n$ and $G_n^*$ and let $\alpha_n$ be an
admissible labelling. We assume 
that ${\mathscr D}_n$ is simply connected and
that $\alpha_n$ is uniformly bounded such that for all $n\in\N$ and
all faces $f\in F({\mathscr D}_n)$ 
\begin{equation}\label{boundalpha}
|\alpha_n(f)-\pi/2| <C 
\end{equation}
with some constant $0<C<\pi/2$ independent of $n$. 

Let $\varepsilon_n\in(0,\infty)$ be a sequence of positive
numbers such that $\varepsilon_n\to 0$ for $n\to\infty$. For each
$n\in\N$, assume that
there is an isoradial circle pattern for $G_n$ and $\alpha_n$,
    where all circles have the same radius $\eps_n$. Assume further
    that all centers of circles lie in the domain $D$ and that any point
    $x\in\overline{D}$ which is not contained in any of the disks
    bounded by the circles of the pattern has a distance less than
    $\hat{C}\eps_n$ to the nearest center of a circle and to the boundary
    $\partial D$, where $\hat{C}>0$ is some constant independent of
    $n$.
Denote by $R_n\equiv \eps_n$ and $\phi_n$ the radius and the angle function
of the 
above circle pattern for $G_n$ and $\alpha_n$. By abuse of notation,
we do not distinguish 
between the realization of the circle pattern, that is 
the centers of circles $z_n$, the intersection points $v_n$, and the
edges connecting corresponding points in ${\mathscr D}_n$ or $G_n$, and
the abstract b-quad-graph ${\mathscr D}_n$ and the graphs $G_n$ and
$G_n^*$. Also, the index $n$ will be dropped from the notation of the
vertices and the edges.

Define another radius function on $G_n$ as follows.
At boundary vertices $z\in V_{\partial}(G_n)$ set
\begin{equation}
  r_n(z)=R_n(z)\left|g'(z)\right|.
\end{equation}
Using Theorem~\ref{theoDirichlet} extend $r_n$ to a solution
of the Dirichlet problem on $G_n$. 
Let $z_0\in V(G_n)$ be such that the disk bounded by the circle
$C_{z_0}$ contains $0$ and let $e=[z_0,v_0]\in E({\mathscr D}_n)$ be one of
the edges 
incident to $z_0$ such that $\phi_n(\vec{e})\in[0,2\pi)$ is minimal. 

Let $\varphi_n$ be the angle function corresponding to $r_n$ that
satisfies
\begin{equation}\label{eqNormPhi}
 \varphi_n(\vec{e})=\arg\left(g'(v_0)\right)+\phi_n(\vec{e}). 
\end{equation}
Let ${\mathscr C}_n$ be the planar circle pattern
with radius function $r_n$ and angle function $\varphi_n$.
Suppose that ${\mathscr C}_n$ is normalized  by a translation such that
\begin{equation}
  p_n(v_0)=g(v_0),
\end{equation}
where $p_n(v)$ denotes the intersection point corresponding to $v\in
V(G_n^*)$.
For $z\in D$ set 
\[ g_n(z)=p_n(w)\quad \text{and}\quad q_n(z)=\frac{r_n(v)}{R_n(v)}
e^{i(\varphi_n(\overrightarrow{vw}) -\phi_n(\overrightarrow{vw}))},\]
where $w$ is a vertex of  
$V(G_n^*)$ closest to $z$ and $v$ is a vertex of $V(G_n)$ closest to
$z$ such that $[v,w]\in E({\mathscr D}_n)$.
 
Then $q_n\to g'$ and $g_n\to g$ uniformly on compact
subsets in $D$ as $n\to\infty$.
\end{theorem}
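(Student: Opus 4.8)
The plan is to transfer the problem from the nonlinear setting of circle patterns to a linear discrete potential-theoretic one, following Schramm's strategy as announced in the introduction. Concretely, write $\rho_n(z)=\log(r_n(z)/R_n(z))=\log(r_n(z)/\eps_n)$ for $z\in V(G_n)$. Since $R_n$ is isoradial, subtracting the two instances of equation~\eqref{eqFgen} (for $r_n$ and for the constant radius $\eps_n$) and using the mean value theorem on $f_\theta$ shows that $\rho_n$ satisfies a \emph{linear} discrete Laplace equation
\begin{equation*}
\sum_{[z,z_0]\in E(G_n)} c_{[z,z_0]}\bigl(\rho_n(z)-\rho_n(z_0)\bigr)=0
\end{equation*}
at every interior vertex, with positive weights $c_{[z,z_0]}=2f'_{\alpha_n([z,z_0])}(\xi)$ for some intermediate $\xi$ between $0$ and $\rho_n(z)-\rho_n(z_0)$; by Lemma~\ref{lemPropf}(1) and the bound~\eqref{boundalpha} these weights are uniformly comparable to those of the isoradial pattern itself, hence the operator is uniformly elliptic in the discrete sense. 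On the boundary $\rho_n=\log|g'|$. Meanwhile $u:=\log|g'|=\Re(\log g')$ is a genuine harmonic function on a neighbourhood of $\overline D$. The first main step is therefore a convergence statement for the Dirichlet problem: the discrete harmonic functions $\rho_n$ with boundary data sampled from the smooth harmonic function $u$ converge to $u$ uniformly on compact subsets, with the a~priori error $O(1/\sqrt{\log(1/\eps_n)})$ claimed in the abstract. This is obtained by a barrier/maximum-principle argument (Lemma~\ref{lemMaxPrinzip}): one compares $\rho_n$ with discretizations of $u$ plus small multiples of a discrete harmonic function that controls the boundary oscillation, the logarithmic rate coming from the modulus-of-continuity estimate for discrete harmonic functions that is uniform over the admissible combinatorics. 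This gives $r_n(z)/R_n(z)\to|g'(z)|$ uniformly on compacta, which is precisely the modulus part of $q_n\to g'$.

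The second step handles the angle function. By Theorem~\ref{theoCompare} applied to ${\mathscr C}_n$ (with radii $r_n$) versus the isoradial pattern (radii $R_n\equiv\eps_n$), the difference $\delta_n:=\varphi_n-\phi_n$ on $V(G_n^*)$ is determined, up to the normalization~\eqref{eqNormPhi} at the base edge, by the ``conjugate'' difference equation~\eqref{eqdeltaw} driven by $w(z)=r_n(z)/R_n(z)=e^{\rho_n(z)}$. Linearizing~\eqref{eqdeltaw} in $\rho_n$ and again using~\eqref{eqLapc}, one sees that $\delta_n$ is, to leading order, a discrete harmonic conjugate of $\rho_n$ on $G_n^*$: the increments of $\delta_n$ across an edge of $G_n^*$ equal (a weight times) the increments of $\rho_n$ across the crossing edge of $G_n$, exactly as in the smooth Cauchy--Riemann relation between $\Im(\log g')=\arg g'$ and $\Re(\log g')=\log|g'|$. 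Since $\rho_n\to u$ uniformly on compacta and $v:=\arg g'=\Im(\log g')$ is the harmonic conjugate of $u$ on the simply connected $D$, one concludes $\delta_n+\phi_n\to v+\arg(\text{reference})$ appropriately, i.e. $\varphi_n(\overrightarrow{vw})-\phi_n(\overrightarrow{vw})\to\arg g'(z)$ uniformly on compacta. Combining with the modulus estimate gives $q_n\to g'$ uniformly on compact subsets. One should be slightly careful that $\delta_n$ is a priori only defined modulo $2\pi$ and that the summation along a path is path-independent; this is handled exactly as in the proof of Theorem~\ref{theoCompare} (the sum of left-hand sides of~\eqref{eqdeltaw} around a white vertex vanishes) together with the normalization fixing one value.

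The third step upgrades $q_n\to g'$ to $g_n\to g$. Here one writes, for a white vertex $w$ reached from the base vertex $v_0$ along a chain of edges $v_0\to w_1\to\cdots\to w$ in ${\mathscr D}_n$ (alternating white/black), the telescoping identity $p_n(w)-p_n(v_0)=\sum (p_n(w_{j+1})-p_n(w_j))$, where each increment is an edge of the kite pattern of ${\mathscr C}_n$ of length $r_n$ (at a white endpoint) and direction $e^{i\varphi_n}$. Up to the controlled combinatorial error, $p_n(w_{j+1})-p_n(w_j)\approx \eps_n \, q_n \cdot (\text{corresponding increment in }{\mathscr D}_n)$, so the sum is a Riemann sum for $\int g'$, and since $p_n(v_0)=g(v_0)$ by normalization and $q_n\to g'$ uniformly on compacta, we get $g_n(z)=p_n(w)\to g(v_0)+\int_{v_0}^z g'=g(z)$ uniformly on compact subsets. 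The hypothesis that every point of $\overline D$ is within $\hat C\eps_n$ of a circle center \emph{and} of $\partial D$ is what makes ``closest vertex'' well defined with $o(1)$ error and ensures one can connect interior points to the boundary through $O(1/\eps_n)$ kites without the path length blowing up.

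The main obstacle is the first step: proving the uniform (over all admissible combinatorics with~\eqref{boundalpha}) convergence of the discrete Dirichlet solutions $\rho_n$ to $u$ with an honest quantitative rate. Unlike the fixed-lattice case, one cannot cite a standard discrete elliptic regularity theorem; the work is to establish a modulus-of-continuity / Harnack-type estimate for discrete harmonic functions on isoradial graphs with uniformly bounded intersection angles that is independent of the (possibly irregular, possibly varying) combinatorics, and to track how the boundary approximation error $\sup_{V_\partial}|\rho_n-u|=O(\eps_n)$ propagates inward — the barrier construction only yields the weaker $O(1/\sqrt{\log(1/\eps_n)})$ because the available equicontinuity estimate for discrete harmonic functions degrades logarithmically near the boundary. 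Once this is in hand, Steps 2 and 3 are essentially bookkeeping with the linearized Cauchy--Riemann equations~\eqref{eqCR1}--\eqref{eqCR6} and~\eqref{eqdeltaw} and with Riemann-sum estimates, using the maximum principle (Lemma~\ref{lemMaxPrinzip}) and Theorem~\ref{theoCompare} as the structural inputs.
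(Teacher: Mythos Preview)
Your overall architecture (pointwise estimate for $\rho_n=\log(r_n/\eps_n)$ against $\log|g'|$; transfer to the angle via the discrete Cauchy--Riemann relation~\eqref{eqdeltaw}; Riemann-sum integration to recover $g_n$) matches the paper, but there are two genuine gaps in the execution.

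First, your linearization of~\eqref{eqFgen} by the mean value theorem yields weights $2f'_{\alpha}(\xi)$ with $\xi$ between $0$ and $\rho_n(z_j)-\rho_n(z)$. These are positive, but not a~priori comparable to the isoradial weights $2f'_{\alpha}(0)$: since $f'_{\alpha}(\xi)\to 0$ as $|\xi|\to\infty$ (Lemma~\ref{lemPropf}), uniform ellipticity would require $|\rho_n(z_j)-\rho_n(z)|$ bounded, which is precisely what is being proved. The paper avoids this circularity (Lemma~\ref{lem1}): at a hypothetical interior maximum of $p=t_n-h_n+\beta|z|^2$ one has $t_n(z_j)-t_n(z)\le x_j$ with $x_j=\Od(\eps_n)$ \emph{automatically}, and then by \emph{monotonicity} of $f_\alpha$ (not MVT) one obtains $0\le\sum_j f_{\alpha_j}(x_j)-\pi$, which can now safely be Taylor-expanded about $0$. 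This barrier argument gives $t_n-h_n=\Od(\eps_n)$, a much better bound than the $\Od((-\log\eps_n)^{-1/2})$ you quote for this step.

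Second, and more seriously, the pointwise bound $t_n-h_n=\Od(\eps_n)$ is \emph{not} enough for the angle. From~\eqref{eqdeltaw} with isoradial background, $\delta_n(v_+)-\delta_n(v_-)= 2f_\alpha(t_n(z_+)-t_n(z_-))-2f_\alpha(0)$, and matching this with $\arg g'(v_+)-\arg g'(v_-)$ requires controlling the \emph{edge difference} $(t_n-h_n)(z_+)-(t_n-h_n)(z_-)$. From Lemma~\ref{lem1} alone this is only $\Od(\eps_n)$; summed along a path of $\Od(\eps_n^{-1})$ edges this gives an $\Od(1)$ error for $\varphi_n-\phi_n-\arg g'$, which is useless. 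The paper's decisive extra ingredient is Lemma~\ref{lem3}:
\[
(t_n-h_n)(z_j)-(t_n-h_n)(z)=\Od\!\bigl(\eps_n(-\log\eps_n)^{-1/2}\bigr)
\]
for incident vertices on compacta, obtained by applying a Harnack-type inequality for positive superharmonic functions (Proposition~\ref{propSC}, from Saloff-Coste) to $\hat p=\eps_n+\|p\|-p$. \emph{This} is where the logarithmic rate actually enters --- in the discrete gradient of $t_n-h_n$, not in $t_n-h_n$ itself. With it in hand, Lemma~\ref{lem2} sums along paths to give $\varphi_n-\phi_n=\arg g'+\Od((-\log\eps_n)^{-1/2})$, and your Step~3 then goes through as written.
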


\begin{remark}
The proof of Theorem~\ref{theoConvC1} actually shows
the following a priori estimations for the approximating functions $q_n$ and
$g_n$.
\begin{equation*}
\|q_n-g'\|_{V(G_n)\cap K} \leq C_1 (-\log_2\eps_n)^{-\frac{1}{2}}
\ \text{ and }\ \|g_n-g\|_{V(G_n)\cap K} \leq C_2
(-\log_2\eps_n)^{-\frac{1}{2}} 
\end{equation*}
for all compact sets $K\subset D$, where the constants $C_1,C_2$
depend on $K$, $g$, $D$, and on the constants of Theorem~\ref{theoConvC1}.
\end{remark}

We begin with an a priori estimation for the quotients of the radius
functions. 

\begin{lemma}\label{lem1}
  For $z\in V(G_n)$ set 
  \begin{align*}
    & h_n(z)=\log\left|g'(z)\right|, \\
    & t_n(z)=\log( r_n(z)/R_n(z))=\log( r_n(z)/\eps_n).
  \end{align*}
  Then 
  \[ h_n(z)-t_n(z)=\Od(\eps_n).\]
\end{lemma}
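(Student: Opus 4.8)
The plan is to treat $t_n$ and $h_n$ as two approximate solutions of the same discrete Laplace‑type problem and then to invoke the Maximum Principle (Lemma~\ref{lemMaxPrinzip}) together with the boundary data to control their difference. First I would observe that by construction $r_n(z)=R_n(z)|g'(z)|$ on $V_\partial(G_n)$, hence $t_n=h_n$ on the boundary; the content of the lemma is an interior estimate. The key is Remark~\ref{remInterLap}: the exact equation~\eqref{eqFgen} for $r_n$ (equivalently for $t_n$, since $R_n\equiv\eps_n$ is constant) can be written, after subtracting the identity satisfied by the isoradial pattern $R_n$, as a nonlinear discrete equation
\[
\sum_{[z,z_0]\in E(G_n)}\Bigl(f_{\alpha([z,z_0])}(t_n(z)-t_n(z_0))-f_{\alpha([z,z_0])}(0)\Bigr)=0
\]
for every interior vertex $z_0$. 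Using the mean value theorem on $f_\theta$ and Lemma~\ref{lemPropf}(1), this is a linear discrete Laplace equation $\sum_j c_j(t_n(z_j)-t_n(z_0))=0$ with positive weights $c_j=2f'_{\alpha}(\xi_j)$ evaluated at intermediate points; by~\eqref{eqLapc} these weights are exactly the ratios $|p(v_1)-p(v_2)|/|p(z_1)-p(z_2)|$ of the (perturbed) circle pattern and are uniformly bounded above and below because $\alpha_n$ satisfies~\eqref{boundalpha}.

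Next I would estimate the discrete Laplacian of $h_n$ with respect to the \emph{isoradial} weights (the ones coming from $R_n$). Since $h_n=\Re(\log g')$ is a smooth harmonic function on $W\supset\overline D$ and the isoradial circle pattern has the crucial property that its associated linear Laplacian is a consistent discretization of the continuous Laplacian on rhombic/isoradial graphs, a Taylor expansion of $h_n$ at $z_0$ — using that the edge vectors emanating from $z_0$ have length comparable to $\eps_n$ and that the isoradial weights annihilate the linear part exactly and the quadratic part up to the usual isoradial cancellation — gives
\[
\Bigl|\sum_j c_j^{\mathrm{iso}}\bigl(h_n(z_j)-h_n(z_0)\bigr)\Bigr|=\Od(\eps_n^{2})\cdot\deg(z_0),
\]
and then comparing $c_j^{\mathrm{iso}}$ with the perturbed $c_j$ (whose difference is $\Od(t_n(z_j)-t_n(z_0))=\Od(\|t_n-h_n\|)$ plus $\Od(\eps_n)$) one gets that $h_n$ satisfies the \emph{same} equation as $t_n$ up to an inhomogeneity of size $\Od(\eps_n^2)\deg(z_0)$ plus lower‑order coupling terms. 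Equivalently, $u_n:=t_n-h_n$ satisfies $L_n u_n = \rho_n$ with $u_n|_{\partial}=0$, where $L_n$ is a uniformly elliptic discrete Laplacian and $\|\rho_n\|_\infty=\Od(\eps_n^{2})$ (the extra factor $\eps_n$ relative to a naive count being absorbed because there are $\Od(\eps_n^{-2})$ vertices and the Green's function of $L_n$ on a domain of bounded diameter is $\Od(1)$ per unit mass, giving a total of $\Od(\eps_n^{-2}\cdot\eps_n^{2}\cdot\eps_n^{2})$ — one must be careful here).

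The final step is a discrete maximum‑principle / barrier argument: compare $u_n$ against an explicit discrete superharmonic barrier. Concretely, since $L_n$ has uniformly bounded weights and $\overline D$ is bounded, a function like $b_n(z)=A(\,|z|^2-M\,)$ with $A=\Od(\eps_n)$ (using again that $L_n(|z|^2)\approx 4\eps_n^2$ up to the isoradial error, so dividing by $\eps_n^2$ the inhomogeneity $\Od(\eps_n^2)$ is dominated after multiplying by $\Od(1)$) serves as a barrier with $L_n b_n\le \rho_n$ and $b_n\le 0$ on $\partial$, whence $u_n\ge b_n\ge -\Od(\eps_n)$; the same with the opposite sign gives $|u_n|=\Od(\eps_n)$, which is precisely $h_n-t_n=\Od(\eps_n)$. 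I expect the main obstacle to be making the consistency estimate for the isoradial Laplacian on $h_n$ rigorous and uniform — i.e.\ showing the quadratic error genuinely cancels to give $\Od(\eps_n^2)$ rather than $\Od(\eps_n)$, and controlling the nonlinear feedback term (the dependence of the weights $c_j$ on $u_n$ itself), which requires either a bootstrap/fixed‑point argument or first proving a crude $\Od(1/\sqrt{-\log\eps_n})$ bound (as in the main theorem) and then upgrading it. The boundedness of $\alpha_n$ away from $0$ and $\pi$ via~\eqref{boundalpha}, and the density/regularity hypothesis on the centers (distance $\le\hat C\eps_n$), are what keep all constants uniform in $n$.
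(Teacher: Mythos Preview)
Your overall strategy---a quadratic barrier combined with the maximum principle---is exactly the one the paper uses, but the execution has two concrete gaps.

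First, your consistency estimate is too weak by one power of $\eps_n$. You claim
\[
\Bigl|\sum_j c_j^{\mathrm{iso}}\bigl(h_n(z_j)-h_n(z_0)\bigr)\Bigr|=\Od(\eps_n^{2}),
\]
but this would only yield $t_n-h_n=\Od(1)$: the barrier $\beta|z|^2$ satisfies $\Delta(\beta|z|^2)\sim\beta\eps_n^2$, so dominating an $\Od(\eps_n^2)$ inhomogeneity forces $\beta=\Od(1)$, hence the barrier is $\Od(1)$ on the boundary. What is actually needed---and what the isoradial (rhombic) structure gives---is $\Od(\eps_n^3)$. The paper obtains this by the telescoping identity $(v_j-v_{j-1})(z_j-z)=(a_j-a_{j-1})(a_j+a_{j-1})=a_j^2-a_{j-1}^2$, which kills the quadratic term $\Re(b(z_j-z)^2)$ exactly, not merely ``up to'' something. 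You gesture at this (``quadratic part up to the usual isoradial cancellation'') but then record the wrong order.

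Second, your linearization via the mean value theorem introduces the feedback problem you flag, and your proposed cure is circular: the $\Od(1/\sqrt{-\log\eps_n})$ estimate in the paper (Lemma~\ref{lem3}) is for \emph{increments} of $t_n-h_n$ and is proved \emph{using} the present lemma as input, so you cannot invoke it to bootstrap here. The paper sidesteps the feedback entirely with a cleaner device: instead of linearizing $\sum_j f_{\alpha_j}(t_n(z_j)-t_n(z))$, it argues directly at a supposed interior maximum of $p=t_n-h_n+\beta|z|^2$. There one has $t_n(z_j)-t_n(z)\le x_j$ with $x_j=h_n(z_j)-h_n(z)-\beta|z_j|^2+\beta|z|^2$ depending only on $h_n$ and the barrier; then monotonicity of $f_\theta$ gives $0\le\sum_j f_{\alpha_j}(x_j)-\pi$, and one Taylor-expands $f_{\alpha_j}(x_j)$ about $0$ using $x_j=\Od(\eps_n)$. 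No knowledge of $t_n$ is required in the expansion, so there is no circularity and no bootstrap.
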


Here and below the notation $s_1=\Od(s_2)$ means that there is a
constant $C$ which may depend on $W,D,g$, but not on $n$ and $z$, such
that $|s_1|\leq Cs_2$ wherever $s_1$ is defined. A direct consequence of
Lemma~\ref{lem1} is
\begin{equation}
  r_n(z)=R_n(z)\left|g'(z)\right| + \Od(\eps_n^{2}).
\end{equation}

Our proof uses ideas of Schramm's proof of the corresponding Lemma 
in~\cite{Sch97}.
\begin{proof}
Consider the function 
\begin{equation*}
  p(z)=t_n(z)-h_n(z)+\beta |z|^2,
\end{equation*}
where $\beta\in(0,1)$ is some function of $\eps_n$. We want to choose
$\beta$ such that $p$ will have no maximum in $V_{int}(G_n)$.

Suppose that $p$ has a maximum at $z\in V_{int}(G_n)$. Denote by
$z_1,\dots, z_m$ the incident vertices of $z$ in $G_n$ in counterclockwise
order. Then we have
\begin{equation}\label{eqineq}
  t_n(z_j)-t_n(z)\leq x_j
\end{equation}
for $j=1,\dots,m$ where
\begin{equation}
x_j=h_n(z_j)-h_n(z)-\beta|z_j|^2+\beta|z|^2.
\end{equation}
Since $z\in V_{int}(G_n)$, we have $|z|=\Od(1)$ and by assumption
$z-z_j=\Od(\eps_n)$. With $\beta\in(0,1)$ this leads to
$\beta|z_j|^2-\beta|z|^2 =\Od(\eps_n)$.
Using this estimate and the smoothness of $\Re(\log g')$, we get $x_j
=\Od(\eps_n)$.

From~\eqref{eqineq}, the definition of $t_n(z)=\log (r_n(z)/
R_n(z))=\log r_n(z)-\log \eps_n$ and the monotonicity 
of the sum in equation~\eqref{eqFgen} (see Lemma~\ref{lemPropf}~(i)), we get
\begin{align}
0=\left(\sum_{j=1}^m
        f_{\alpha(z,z_j)}(\underbrace{\log r_n(z_j)-\log
r_n(z)}_{= t_n(z_j)-t_n(z)})\right) -\pi
      \leq \left(\sum_{j=1}^m
        f_{\alpha(z,z_j)}\left(x_j\right)\right)
      -\pi. \label{eqMonoq}
\end{align}
Remembering $x_j=\Od(\eps_n)$, we can consider
a Taylor expansion of the right hand side of inequality~\eqref{eqMonoq} about
$0$ in order to make an $\Od(\eps_n^3)$-analysis.

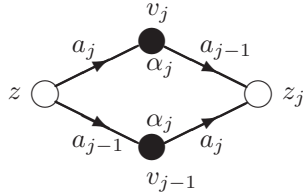
\begin{floatingfigure}[p]{4.3cm}
\begin{center}
\setlength{\unitlength}{0.7cm}
\begin{picture}(6,3)(-0.5,0)
\put(0.5,1.5){\circle{0.5}}
\put(4.5,1.5){\circle{0.5}}
\put(2.5,2.5){\circle*{0.5}}
\put(2.5,0.5){\circle*{0.5}}
\thicklines
\put(0.7,1.65){\vector(2,1){1}}
\put(0.7,1.65){\line(2,1){1.55}}
\thicklines
\put(2.75,2.4){\vector(2,-1){1}}
\put(2.75,2.4){\line(2,-1){1.53}}
\thicklines
\put(0.7,1.35){\vector(2,-1){1}}
\put(0.7,1.35){\line(2,-1){1.55}}
\thicklines
\put(2.75,0.57){\vector(2,1){1}}
\put(2.75,0.57){\line(2,1){1.54}}
\put(2.4,1.95){$\alpha_j$}
\put(2.4,0.9){$\alpha_j$}
\put(-0.2,1.4){$z$}
\put(4.95,1.4){$z_j$}
\put(2.4,-0.2){$v_{j-1}$}
\put(2.4,3.0){$v_j$}
\put(1.0,2.3){$a_{j}$}
\put(1.0,0.5){$a_{j-1}$}
\put(3.4,2.3){$a_{j-1}$}
\put(3.4,0.5){$a_{j}$}
\end{picture}
\end{center}
\caption{
A rhombic face of ${\mathscr D}_n$ with oriented
edges.}\label{faceFig}
\end{floatingfigure}
Consider the chain of faces $f_j$ of ${\mathscr D}_n$ ($j=1,\dots, m$)
which are incident to $z$ and $z_j$.
The enumeration
of the vertices $z_j$ (and hence of the faces $f_j$) and of the black
vertices $v_1,\dots,v_m$ incident to these faces can be chosen such that
$f_j$ is incident to $v_{j-1}$ and $v_j$ for $j=1,\dots, m$, where
$v_0=v_m$. Furthermore, using this enumeration $i(z_j-z)$ and $(v_j-v_{j-1})$
are parallel,
see Figure~\ref{faceFig}.
As each face $f_j$ of an isoradial circle pattern is a
rhombus we can write, using the notation of Figure~\ref{faceFig},
\begin{align}
z_j-z= a_{j-1}+a_j \qquad \text{and} \qquad
v_j-v_{j-1}=a_j-a_{j-1}. \label{eqzva}
\end{align} 
Denoting $\alpha_j =\alpha([z,z_j])$, $l_j=|z_j-z|=2\eps_n\sin(\alpha_j/2)
$, and $\hat{l}_j=|v_j-v_{j-1}|=2\eps_n\cos(\alpha_j/2)$,
we easily obtain by simple calculations that
\begin{align*}
&f_{\alpha_j}(0)=(\pi-\alpha_j)/2, \quad
f_{\alpha_j}'(0)=\hat{l}_j/(2l_j), \quad
f_{\alpha_j}''(0)= 0. 
\end{align*}
Taking into account that equation~\eqref{eqFgen} holds with $R_n\equiv \eps_n$
and using the uniform boundedness~\eqref{boundalpha} of the labelling~$\alpha$, 
inequality~\eqref{eqMonoq} yields
\begin{equation}\label{eqTaylorR}
0\leq \sum_{j=1}^m f_{\alpha_j}'(0)x_j + \Od(\eps_n^3).
\end{equation}
To evaluate this sum, expand
\begin{gather*}
h_n(z_j)-h_n(z)= \Re(\log g'(z_j)-\log g'(z)) = \Re(a(z_j-z)+ b(z_j-z)^2 )+
\Od(\eps_n^3) \\[0.5ex]
\begin{split} \text{and }\
x_j&= h_n(z_j)-h_n(z)-\beta|z_j|^2+\beta|z|^2 \\
&= \Re(a(z_j-z)+ b(z_j-z)^2 -2\beta \bar{z}(z_j-z)) - \beta l_j^2 +
\Od(\eps_n^3).
\end{split}
\end{gather*}
Noting that $f_{\alpha_j}'(0)(z_j-z) =(v_j-v_{j-1})/(2i)$ we get
\begin{align*}
\sum_{j=1}^m f_{\alpha_j}'(0)x_j &= \Re\Biggl(\frac{a-2\beta
  \bar{z}}{2i} \underbrace{\sum_{j=1}^m (v_j-v_{j-1})}_{=0} +
\frac{b}{2i} \underbrace{\sum_{j=1}^m
\underbrace{(v_j-v_{j-1}) (z_j-z)}_{
\stackrel{\eqref{eqzva}}{=} (a_j-a_{j-1})(a_j+a_{j-1})}}_{=0}\Biggr) \\
&\,\quad -\beta \sum_{j=1}^m\frac{l_j\hat{l}_j}{2}
+ \Od(\eps_n^3).
\end{align*}
Thus from inequality~\eqref{eqTaylorR}, remembering 
$l_j=2\eps_n\sin(\alpha_j/2)$ and $\hat{l}_j=2\eps_n\cos(\alpha_j/2)$,  we
arrive at
\begin{equation*}
0\leq -\beta \eps_n^2 \sum_{j=1}^m\sin(\alpha_j/2) \cos(\alpha_j/2) +
\Od(\eps_n^3) \quad\iff\quad \beta \sum_{j=1}^m \sin(\alpha_j) \leq
\Od(\eps_n).
\end{equation*}
Note that $\eps_n^2\sum_{j=1}^m \sin(\alpha_j)>\pi \eps_n^2$ is the
area of the rhombic faces incident to the vertex $z$.
Thus we conclude that $\beta =\Od(\eps_n)$.
This means, that if we choose $\beta= C\eps_n$ with $C>0$ a sufficiently
large constant and if $\eps_n$ is small enough such that $C\eps_n< 1$,
then $p$ will have no maximum in $V_{int}(G_n)$. In
that case, as we have $p(z)=\beta |z|^2=\Od(\eps_n)$ on
$V_{\partial}(G_n)$, we deduce that $p(z)\leq \Od(\eps_n)$ in
$V(G_n)$ and thus
\begin{equation}
  t_n(z)-h_n(z)\leq \Od(\eps_n) \qquad \text{for } z\in V(G_n).
\end{equation}
\hspace{\parindent}
The proof for the reverse inequality is almost the same. The only
modifications needed are reversing the sign of $\beta$ and a few
inequalities.
\end{proof}

\begin{remark}\label{remeps2}
The statement of Lemma~\ref{lem1} can be improved to
\begin{equation}\label{eqlem1eps2}
h_n(z)-t_n(z)=\Od(\eps_n^2)
\end{equation}
in the case of a 'very regular' isoradial circle pattern. These are isoradial
circle patterns such that for every oriented edge $e_{j_1}=z_{j_1}-z\in
\vec{E}(G)$ incident to an
interior vertex $z\in V_{int}(G)$ there is
another parallel edge $e_{j_2}=z_{j_2}-z \in \vec{E}(G)$ with opposite direction
incident to $z$, that is $e_{j_2}=- e_{j_1}$.
Furthermore, the corresponding intersection angles agree:
$\alpha([z,z_{j_1}]) =\alpha([z,z_{j_2}])$.
This additional regularity property holds for example for an orthogonal circle
pattern with the combinatorics of a part of the square grid, see
Figure~\ref{figExQuad}.

The proof of estimation~\eqref{eqlem1eps2} follows the same reasonings as above,
but makes an $\Od(\eps_n^4)$-analysis.
The additional regularity implies
that all terms of order $\eps_n^3$ vanish.
\end{remark}

\begin{definition}\label{defLap}
 For a function $\eta:V(G)\to\R$ define a discrete Laplacian by
\begin{equation}\label{eqLap}
  \Delta \eta (z)=\sum_{[z,z_j]\in E(G)} 2 f'_{\alpha([z,z_j])}(0)
(\eta(z_j)-\eta(z)).
\end{equation}
\end{definition}

As $f'_{\alpha([z_1,z_2])}(0)>0$ one immediately has the following
\begin{lemma}[Maximum Principle]\label{MaxPrinzip}
If $ \Delta \eta \geq 0$ on $V_{int}(G)$
then the maximum of $\eta$ is attained at the boundary $V_\partial (G)$.
\end{lemma}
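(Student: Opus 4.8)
The plan is to run the classical discrete maximum-principle argument, exploiting the positivity of the weights $2f'_{\alpha([z,z_j])}(0)>0$ established in Lemma~\ref{lemPropf}~(1). First I would set $M=\max_{z\in V(G)}\eta(z)$, which exists since $G$ is finite, and consider the (nonempty) level set $S=\{z\in V(G):\eta(z)=M\}$. The goal is to show $S\cap V_\partial(G)\neq\emptyset$.

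The key local step: suppose $z_0\in S\cap V_{int}(G)$. Since $\eta(z_j)\le M=\eta(z_0)$ for every neighbor $z_j$ of $z_0$, each summand $2f'_{\alpha([z_0,z_j])}(0)\,(\eta(z_j)-\eta(z_0))$ in $\Delta\eta(z_0)$ is $\le 0$. But $\Delta\eta(z_0)\ge 0$ by hypothesis, so a sum of nonpositive terms is nonnegative, forcing every summand to vanish; as the weights are strictly positive, this gives $\eta(z_j)=\eta(z_0)=M$ for all neighbors $z_j$, i.e.\ all neighbors of $z_0$ also lie in $S$.

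Next I would propagate this: if $S$ contained no boundary vertex, then $S\subseteq V_{int}(G)$, and by the local step $S$ would be closed under taking neighbors in $G$; since $S$ is nonempty, it would then contain an entire connected component of $G$. Because $G$ is (a connected graph) associated to a b-quad-graph decomposing a domain with nonempty boundary, every connected component meets $V_\partial(G)$, which contradicts $S\subseteq V_{int}(G)$. Hence $S\cap V_\partial(G)\neq\emptyset$, i.e.\ the maximum of $\eta$ is attained on the boundary.

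I do not expect a genuine obstacle here; the only point requiring a word of care is the propagation/connectivity step, namely that no connected component of $G$ lies entirely in the interior — this is immediate from the standing assumption that $\mathscr D$ (hence $G$) is the cell decomposition of a domain with boundary, so boundary vertices exist in each component.
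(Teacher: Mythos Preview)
Your argument is correct and is exactly the standard one the paper has in mind: the paper gives no proof at all but simply remarks, just before the lemma, that ``As $f'_{\alpha([z_1,z_2])}(0)>0$ one immediately has the following,'' i.e.\ the positivity of the weights makes the discrete maximum principle immediate via precisely the neighbor-propagation argument you wrote out.
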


The proof of Lemma~\ref{lem1} actually shows, that 
$t_n-h_n$ is almost harmonic. More precisely, we have
$\Delta(t_n-h_n)= \Od(\eps_n^3)$. Adding a suitable subharmonic
function $\beta |z|^2$ with $\beta>0$ big enough, we deduce that the resulting
function $p$ is {\em subharmonic}, that is $\Delta p \geq 0$, such
that $p$ attains its maximum at the boundary. This is also
important for our proof of the following lemma.

\begin{lemma}\label{lem3}
Let $t_n$ and $h_n$ be defined as in Lemma~\ref{lem1}.
Let $K\subset D$ be a compact subset in $D$. Then the following
estimation holds for every interior vertex $z\in
V_{int}(G_n)\cap K$ such that all its incident vertices $z_1,\dots,z_l$ are
also in $V_{int}(G_n)\cap K$:
\begin{equation}\label{eqestdiff}
t_n(z_j)-h_n(z_j)-(t_n(z)-h_n(z))=\Od(\eps_n(-\log\eps_n)^{-\frac{1}{2}}).
\end{equation}
for $j=1,\dots,l$.
The constant in the $\Od$-notation may depend on $K$, but not on $n$ or $z$.
\end{lemma}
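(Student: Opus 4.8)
The plan is to combine the global a priori bound from Lemma~\ref{lem1} with the near-harmonicity of $u_n := t_n - h_n$ observed in the remark preceding the statement, and then to invoke a discrete gradient estimate: for discrete harmonic (or almost-harmonic) functions on isoradial graphs, the size of a one-step difference $u_n(z_j)-u_n(z)$ is controlled not by the sup-norm of $u_n$ itself but by a quantity of order $\eps_n/\sqrt{-\log\eps_n}$ when $u_n$ is of order $\eps_n$ on a disk of fixed radius around $z$. Concretely, fix a compact $K\subset D$ and a slightly larger compact $K'$ with $K\subset \mathrm{int}(K')\subset K'\subset D$; by Lemma~\ref{lem1} we have $|u_n|\le C\eps_n$ on all of $V(G_n)$, in particular on $V(G_n)\cap K'$, and by the computation in the proof of Lemma~\ref{lem1} we have $|\Delta u_n|\le C\eps_n^3$ on $V_{int}(G_n)$, where $\Delta$ is the Laplacian of Definition~\ref{defLap} with weights $2f'_{\alpha}(0)=\hat l/(2l)$ of order $1$.

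The first step is to reduce to the genuinely harmonic case. Solve the discrete Dirichlet problem $\Delta \tilde u_n = 0$ on the vertices of $K'$ with $\tilde u_n = u_n$ on the boundary vertices of $K'$; the error $u_n - \tilde u_n$ satisfies $\Delta(u_n-\tilde u_n)=\Od(\eps_n^3)$ with zero boundary data on a region of diameter $\Od(1)$, so by the maximum principle (Lemma~\ref{MaxPrinzip}) applied to $u_n-\tilde u_n \mp C\eps_n^3\,|z|^2$ (a function whose discrete Laplacian has a definite sign, exactly as $\beta|z|^2$ was used in Lemma~\ref{lem1}) we get $|u_n - \tilde u_n| = \Od(\eps_n^3)$, hence also $|\tilde u_n|\le C\eps_n$ on $K'$; moreover a single discrete difference of $u_n-\tilde u_n$ is trivially $\Od(\eps_n^3)$ since each term is. So it suffices to prove the estimate $\tilde u_n(z_j)-\tilde u_n(z) = \Od(\eps_n(-\log\eps_n)^{-1/2})$ for the harmonic function $\tilde u_n$.

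The second and central step is the gradient estimate for discrete harmonic functions bounded by $\eps_n$ on a disk of fixed radius $\rho = \mathrm{dist}(K,\partial K')>0$ around $z$. I would establish it via an energy (Dirichlet) argument rather than via a discrete Harnack/Cauchy estimate, because the latter would only give $\Od(\eps_n)$ for a single step and not the gain of $1/\sqrt{-\log\eps_n}$. The idea: the Dirichlet energy of $\tilde u_n$ over the annulus $\{\rho/2 \le |w-z| \le \rho\}\cap K'$ is $\Od(\eps_n^2)$ (it is bounded by $\sup|\tilde u_n|^2$ times the combinatorial ``capacity'', which here is $\Od(1)$ since the annulus has fixed width and the graph is isoradial with bounded weights). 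On the other hand, because the isoradial graph at scale $\eps_n$ resolves the annulus into roughly $\log(\rho/\eps_n)/\log$ ``dyadic shells'' — more precisely, on each of the $\Od(\log(1/\eps_n))$ concentric sub-annuli of fixed modulus the restriction of a harmonic function has energy at least a constant times (oscillation across that shell)$^2$, and these energies add up — the total energy dominates a constant times $\log(1/\eps_n)$ times the square of the maximal single-edge difference near $z$; comparing with the $\Od(\eps_n^2)$ upper bound yields $|\tilde u_n(z_j)-\tilde u_n(z)|^2 = \Od(\eps_n^2/\log(1/\eps_n))$, which is the claim. The isoradiality enters twice: it guarantees the discrete Laplacian weights are bounded above and below (so discrete and continuous Dirichlet energies are comparable and the maximum principle applies cleanly), and it guarantees a uniform doubling/Poincaré property at all scales from $\eps_n$ up to $\Od(1)$ so the shell-by-shell energy lower bound is legitimate.

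The main obstacle I anticipate is precisely this last energy lower bound at the discrete level: I need a quantitative statement that a discrete harmonic function on an isoradial piece of graph that has oscillation $\ge s$ across an annulus of fixed modulus must have Dirichlet energy $\ge c\,s^2$ there, with $c$ independent of $n$ and of the (possibly irregular, $n$-dependent) combinatorics — only the bound \eqref{boundalpha} on intersection angles and the geometric fullness hypothesis (every point of $\overline D$ within $\hat C\eps_n$ of a center) may be used. This is a discrete Poincaré inequality on a fixed domain for a family of graphs with mesh $\eps_n\to 0$; it follows from the uniform ellipticity of the weights together with a standard covering/chaining argument, but making the constant uniform over the whole admissible class of circle patterns — not just square-grid or periodic ones — is the delicate point, and is the analogue in this setting of the step where Schramm uses the specific square-grid structure.
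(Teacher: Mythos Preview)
Your overall strategy is correct and matches the paper's: use Lemma~\ref{lem1} to bound $u_n:=t_n-h_n$ by $\Od(\eps_n)$, observe that $\Delta u_n=\Od(\eps_n^3)$, and then invoke a gradient-type estimate that converts these bounds into the edge-difference bound~\eqref{eqestdiff}. The difference lies entirely in how that last step is carried out, and your version of it has a genuine gap.

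The problematic step is the claimed energy lower bound: ``the total energy dominates a constant times $\log(1/\eps_n)$ times the square of the maximal single-edge difference near $z$.'' The capacity/Poincar\'e inequality you invoke on each dyadic shell controls the energy from below by the square of the \emph{oscillation across that shell}, not by the square of the gradient at the center. Summing over $N\sim\log(1/\eps_n)$ shells and using Cauchy--Schwarz gives $E\ge c\,(\text{total oscillation})^2/N$, which is the inequality in the \emph{wrong direction} for your purpose: combined with the Caccioppoli upper bound $E=\Od(\eps_n^2)$ it yields only $(\text{osc})^2=\Od(\eps_n^2\log(1/\eps_n))$, weaker than what you already have. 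There is no mechanism in your sketch that propagates a large single-edge difference at the center outward to force large energy on \emph{every} shell; for a general harmonic function the gradient at the center and the oscillation on a distant annulus are unrelated. So as written the argument does not produce the $\sqrt{-\log\eps_n}$ gain.

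The paper sidesteps this entirely by quoting a ready-made Harnack inequality of Saloff-Coste (Proposition~\ref{propSC}): for a positive superharmonic function $u$ on a combinatorial ball of radius $R$ in a weighted graph with uniformly bounded ratios $m$ and volume growth $W_x(\varrho)=\Od(\varrho^2)$, one has $|u(x)/u(y)-1|\le C/\sqrt{\log_2 R}$ for adjacent $x,y$. The paper sets $\hat p=\eps_n+\|p\|-p$ with $p=t_n-h_n+\beta|z|^2$ and $\beta=C\eps_n$ large enough; then $\hat p$ is positive, superharmonic, and $\|\hat p\|=\Od(\eps_n)$. Checking the volume-growth hypothesis is straightforward from the isoradial geometry and the angle bound~\eqref{boundalpha}, and the combinatorial radius $R$ available around any $z\in K$ is $\ge C_5/\eps_n$. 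Saloff-Coste's inequality then gives $|\hat p(z)-\hat p(z_j)|=\Od(\eps_n/\sqrt{-\log_2\eps_n})$ directly. If you want to avoid citing~\cite{SC97}, the underlying mechanism is the logarithmic growth of the 2D Green's function (equivalently, of the effective resistance from the center to $\partial B_R$), not a shell-by-shell energy lower bound; this is also what your correctly identified ``main obstacle'' would have to encode.
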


The proof of Lemma~\ref{lem3} uses the following estimation for
superharmonic functions, which is a version of Corollary~3.1 of~\cite{SC97};
see also~\cite[Remark~3.2 and Lemma~2.1]{SC97}.

\begin{proposition}[\cite{SC97}]\label{propSC}
Let $G$ be an undirected connected graph without loops and let
$c:E(G)\to \R^+$ be 
a positive weight function on the edges. Denote $c(e)=c(x,y)$
for an edge $e=[x,y]\in E(G)$ and assume that 
\[m= \max_{[x,y]\in E(G)}
\sum_{[x,z]\in E(G)}\frac{c(x,z)}{c(x,y)}<\infty.\]
Denote by $d(x,y)$ the combinatorial distance
between two vertices $x,y\in V(G)$ in the graph $G$. Let $B_x(\varrho)= \{y\in
V(G) :
d(x,y)\leq \varrho\}$ be the combinatorial ball of radius $\varrho>0$ around the
vertex
$x\in V(G)$. Fix $x\in V(G)$ and $R\geq 4$ and set
\[A_R=\sup_{1\leq \varrho\leq R} \varrho^{-2}W_x(\varrho),\qquad
\text{where}\qquad W_x(\varrho)= \sum_{\substack{z\in B_x(\varrho),\ y\in V(G)\\
d(x,z)<d(x,y)}} c(z,y).\] 

Let $u$ be a positive superharmonic function in $B_x(R+1)$, that is
\[\sum_{[z,w]\in E(G)} c(z,w)(u(z)-u(w)) \leq 0\] 
for all $w\in B_x(R+1)$. Let $y$ be incident to $x$ in $G$. Then
\begin{equation*}
\left| \frac{u(x)}{u(y)}-1\right| \leq
\frac{4m^2\sqrt{A_R}}{\sqrt{c(x,y)\log_2 R}}.
\end{equation*}
\end{proposition}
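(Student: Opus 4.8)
\section*{Proof proposal}

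The plan is to reduce the assertion to the product of two estimates: a purely geometric bound on the capacity of $x$ relative to the complement of $B_x(R)$, and a Caccioppoli-type energy inequality for $\log u$ that converts this capacity bound into a bound for the logarithmic gradient of $u$ across the single edge $[x,y]$. A key preliminary observation is that superharmonicity and positivity already force a uniform per-edge bound on $\log u$. Indeed, the hypothesis at a vertex $w$ reads $\sum_{z\sim w}c(z,w)(u(w)-u(z))\ge 0$, so $u(w)\ge\sum_{z\sim w}\frac{c(z,w)}{c(w)}u(z)\ge\frac{c(z,w)}{c(w)}u(z)\ge\frac1m u(z)$ for any single neighbour $z$, where $c(w)=\sum_{z\sim w}c(z,w)$ and we used $c(w)/c(z,w)\le m$. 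Applying this at both endpoints gives $\frac1m\le u(z)/u(w)\le m$, hence $|\log u(z)-\log u(w)|\le\log m$ on every edge of $B_x(R)$. Write $\mathcal E(f)=\frac12\sum_{[z,w]\in E(G)}c(z,w)(f(z)-f(w))^2$, let $\mathrm{cap}$ be the minimum of $\mathcal E$ over functions equal to $1$ at $x$ and $0$ off $B_x(R)$, and let $h$ be the minimiser (harmonic on $B_x(R)\setminus\{x\}$, with $0\le h\le 1$ and $h(x)=1$).

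First I would show $\mathrm{cap}\le 4A_R/\log_2 R$. Let $C_k$ be the total conductance of the edges joining the spheres $\{d(x,\cdot)=k\}$ and $\{d(x,\cdot)=k+1\}$; by the definition of $W_x$ one has $W_x(\varrho)=\sum_{k=0}^{\varrho}C_k\le A_R\varrho^2$. These edge sets are disjoint cuts separating $x$ from the complement of $B_x(R)$, so the Nash--Williams inequality gives $1/\mathrm{cap}\ge\sum_{k=0}^{R-1}1/C_k$. Splitting the indices into dyadic blocks $[2^j,2^{j+1})$ and using Cauchy--Schwarz inside each block together with $\sum_{k<2^{j+1}}C_k\le 4A_R4^{j}$ shows that each block contributes at least $1/(4A_R)$; summing over the $\lfloor\log_2 R\rfloor$ blocks yields the claim.

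The core step is the energy inequality. Testing superharmonicity against $\psi=h^2/u\ge 0$ and summing by parts gives $\sum_{[z,w]}c(z,w)(\psi(z)-\psi(w))(u(z)-u(w))\ge 0$. Abbreviating $a=h(z)$, $b=h(w)$, $\Delta=\log u(z)-\log u(w)$, a direct computation rewrites each summand as $-(a^2+b^2)(\cosh\Delta-1)+(a^2-b^2)\sinh\Delta$, whence
\[\sum_{[z,w]}c(z,w)(a^2+b^2)(\cosh\Delta-1)\ \le\ \sum_{[z,w]}c(z,w)(a^2-b^2)\sinh\Delta.\]
Because $|\Delta|\le\log m$ on every edge, one has $\sinh^2\Delta\le 2\lambda(\cosh\Delta-1)$ with $\lambda=\tfrac12(\cosh(\log m)+1)=O(m)$; applying Cauchy--Schwarz to the right-hand side, absorbing the resulting factor into the left-hand side, and using $(a^2-b^2)^2/(a^2+b^2)\le 2(a-b)^2$ gives
\[\sum_{[z,w]}c(z,w)\bigl(h(z)^2+h(w)^2\bigr)\bigl(\log u(z)-\log u(w)\bigr)^2\ \le\ C\,m\sum_{[z,w]}c(z,w)(h(z)-h(w))^2\ =\ 2Cm\,\mathrm{cap}.\]

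Retaining only the single edge $[x,y]$ on the left and using $h(x)=1$ together with $h(y)\ge 1-\mathrm{cap}/c(x,y)\ge\frac12$ (from the current identity $\mathrm{cap}=\sum_{w\sim x}c(x,w)(1-h(w))$, in which every term is nonnegative; in the complementary regime $\mathrm{cap}/c(x,y)>\frac12$ the asserted inequality holds trivially, since then its right-hand side exceeds $m\ge|u(x)/u(y)-1|$), I obtain $c(x,y)(\log u(x)-\log u(y))^2\le C'm\,\mathrm{cap}$. Combining with the capacity bound and taking square roots gives $|\log(u(x)/u(y))|\le C''\sqrt{m}\,\sqrt{A_R/(c(x,y)\log_2 R)}$, and finally $|u(x)/u(y)-1|\le m\,|\log(u(x)/u(y))|$ (valid because $|e^t-1|\le m|t|$ for $|t|\le\log m$) produces the stated estimate, the power $m^2$ arising from the ellipticity constant entering both the absorption ($\lambda=O(m)$) and the log-to-ratio conversion. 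The main obstacle is the Caccioppoli step: carrying out the $\cosh/\sinh$ identity and the Cauchy--Schwarz absorption rigorously and tracking the dependence on $m$; the a priori per-edge bound $|\log u(z)-\log u(w)|\le\log m$ established at the outset is precisely what makes this absorption legitimate, whereas the capacity estimate via Nash--Williams is elementary.
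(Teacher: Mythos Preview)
The paper does not give its own proof of this proposition; it is quoted from Saloff-Coste \cite{SC97} (specifically Corollary~3.1, with pointers to Remark~3.2 and Lemma~2.1 there) and is used as a black box in the proof of Lemma~\ref{lem3}. So there is nothing in the present paper to compare your argument against.

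That said, your sketch is a faithful reconstruction of the argument in \cite{SC97}. The three ingredients---the one-step elliptic Harnack bound $1/m\le u(z)/u(w)\le m$ on every edge of $B_x(R+1)$ (this is Lemma~2.1 in \cite{SC97}), the Nash--Williams/dyadic estimate $\mathrm{cap}(\{x\},B_x(R)^c)\le 4A_R/\log_2 R$, and the Caccioppoli inequality for $\log u$ obtained by testing superharmonicity against $h^2/u$---are exactly what Saloff-Coste uses. Your $\cosh/\sinh$ identity is correct (one checks $(a^2/p-b^2/q)(p-q)=a^2(1-q/p)+b^2(1-p/q)$, which rewrites as stated with $\Delta=\log(p/q)$), and the absorption via $\sinh^2\Delta=(\cosh\Delta+1)(\cosh\Delta-1)\le 2\lambda(\cosh\Delta-1)$ is legitimate precisely because of the a~priori edge bound $|\Delta|\le\log m$. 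The handling of the degenerate case $\mathrm{cap}/c(x,y)>\tfrac12$ is also fine: combined with $\mathrm{cap}\le 4A_R/\log_2 R$ it forces $A_R/(c(x,y)\log_2 R)>1/8$, so the right-hand side of the asserted inequality exceeds $m\sqrt{2}\ge m-1\ge |u(x)/u(y)-1|$. Your constant tracking actually delivers an exponent $m^{3/2}$ rather than $m^2$ (since $\sqrt{\lambda}=O(\sqrt{m})$ and the log-to-ratio conversion costs a factor $m$), so the stated bound follows a~fortiori. The argument is sound.
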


\begin{proof}[Proof of Lemma~\ref{lem3}.]
Lemma~\ref{lem1} implies that $t_n(z_j)-t_n(z)=\Od(\eps_n)$
for all incident vertices $z,z_j\in V(G_n)$ since $h_n=\log|g'|$ is a
$C^\infty$-function. Consider a Taylor expansion about $0$ of
\[0=\Biggl(\sum_{j=1}^m f_{\alpha(z,z_j)}(t_n(z_j)-t_n(z))\Biggr)
-\pi.\] 
Similar reasonings as in the proof of Lemma~\ref{lem1} imply that
$\Delta t_n(z)=\Od(\eps_n^3)$.
Let $p=t_n-h_n+\beta|z|^2$ with $\beta\in(0,1)$.
Choosing $\beta= C\eps_n$ with a sufficiently
large constant $C>0$ and $\eps_n$ small enough we deduce
similarly as in the proof of Lemma~\ref{lem1} that
$\Delta p(z)\geq 0$ for all interior vertices $z\in V_{int}(G_n)$.
Now define the positive function
$ \hat{p}=\eps_n + \|p\| -p$.
Then $ \Delta\hat{p}(z)\leq 0$ for all $z\in V_{int}(G_n)$. The proof
of Lemma~\ref{lem1} shows that there is a constant $C_1$, depending
only on $g$, $D$, and the labelling $\alpha$, such that
$\|p\|\leq C_1\eps_n$. Thus $\|\hat{p}\|\leq C_2 \eps_n$ with
$C_2=2C_1+1$. 

To finish to proof, we apply Proposition~\ref{propSC} to the
superharmonic function $\hat{p}$. Remember that $G_n$ is a connected
graph without loops and $c(e):=2f_{\alpha(e)}'(0)>0$ defines a positive
weight function on the edges. The bound~\eqref{boundalpha} on the
labelling $\alpha$ implies that 
\[m= \max_{[x,y]\in E(G_n)}
\sum_{[x,z]\in E(G_n)} \frac{c(x,z)}{c(x,y)}<
\frac{2\pi}{\pi/2-C}\frac{\cot(\pi/4-C/2)}{\cot(\pi/4+C/2)}=:C_3 <\infty.\]
Let $x\in V_{int}(G_n)$.
Note that
\[W_x(\varrho)= \sum_{\substack{z\in B_x(\varrho),\; y\in V(G_n)\\
    d(x,z)<d(x,y)}} c(z,y) \leq \left(\max_{e\in E(G_n)} c(e)\right)
|F_w(x,\varrho)|,\]
where $F_w(x,\varrho)$ is the set of all faces of ${\mathscr D}_n$ with one
white
vertex $z\in B_x(\varrho)$ and $|F_w(x,\varrho)|$ denotes the number of faces of
$F_w(x,\varrho)$.
Now, $\max_{e\in E(G_n)} c(e)<\cot(\pi/4-C/2)/2$ and 
\[F_w(x,\varrho) \subset
D_x((\varrho+1)2\eps_n)= \{w\in\C : |w-x|\leq (\varrho+1)2\eps_n\},\]
as the edge lengths in $G_n$ are smaller than $2\eps_n$. Remember that
$F(f)=\eps_n^2 \sin\alpha(f) > \eps_n^2\sin(\pi/2-C)$ is the area of the face
$f\in F({\mathscr D}_n)$. Thus
\[|F_w(x,\varrho)| < \frac{\pi((\varrho+1)2\eps_n)^2}{\eps_n^2\sin(\pi/2-C)}
\leq
\frac{16\pi \varrho^2}{\sin(\pi/2-C)}=:\varrho^2 C_4 \]
for all $\varrho\geq 1$. Therefore we obtain $A_R=\sup_{1\leq \varrho\leq R}
\varrho^{-2}W_x(\varrho) <C_4$, where the upper bound $C_4$ is
independent of $R\geq 4$ and $n\in\N$.

Let $K\subset D$ be compact. 
Denote by $\mathbb{e}$ the Euclidean 
distance (between a point and a compact set or between closed sets of
$\R^2\cong \C$).
Let $z\in V(G_n)\cap K$ and set
$(R+1)=d(z,V_\partial(G_n))$ to be the combinatorial distance
from $z$ to the boundary of $G_n$. Let $z_j\in V(G_n)$ be incident to $z$.
As the labelling $\alpha$ is bounded, $\eps_n\to 0$, and ${\mathscr D}_n$
approximates $D$, we deduce that
$R\geq \eps_n^{-1} C_5 \geq 4$ if $n\geq n_0$ is large enough. Thus
for all $n\geq n_0$ and all $z\in V(G_n)\cap K$
\[
1/\sqrt{\log_2 R}\leq 1/\sqrt{\log_2 C_5 -\log_2\eps_n}\leq
\sqrt{2}/\sqrt{-\log_2\eps_n}
\]
holds  by our
assumptions. Proposition~\ref{propSC} implies that
\[\left| \frac{\hat{p}(z)}{\hat{p}(z_j)}-1\right| \leq
\frac{4C_3^2\sqrt{C_4}\sqrt{2}}{\sqrt{-c(z,z_j)\log_2\eps_n }}\]
for all incident vertices $z,z_j\in V(G_n)\cap K$ and $n\geq n_0$. As
$c(z,z_j)\leq \cot(\pi/4-C/2)/2$ and $\|\hat{p}\|\leq C_2\eps_n$ we
finally arrive at the desired estimation
\[|t_n(z_j)-h_n(z_j)-(t_n(z)-h_n(z))| =|\hat{p}(z)-\hat{p}(z_j)| \leq
C_6 \eps_n(-\log_2\eps_n)^{-\frac{1}{2}}\]
for all incident vertices $z,z_j\in V(G_n)\cap K$ and $n\geq n_0$, where the
constant $C_6$ depends on $C_2,\dots,C_5$, that
is only on $g$, $D$, $C$, $\hat{C}$, and $K$.
\end{proof}

\begin{lemma}\label{lem2}
  Let $\vec{e}=\overrightarrow{uv}\in \vec{E}({\mathscr D}_n)$ be a directed
 edge with $u\in V(G_n)$ and $v\in V(G_n^*)$. Denote by
  $\delta_n({e})$  the combinatorial 
  distance in ${\mathscr D}_n$ from $e=[u,v]$ to $[z_0,v_0]$,
  that is the least integer $k$ such that there is a sequence of edges
  $\{[z_0,v_0]=e_1,e_2,\dots,e_k=e\}\subset E({\mathscr D}_n)$ such that
 the edges $e_{m+1}$ and $e_m$ are incident to the same face in ${\mathscr
   D}_n$ for $m=1,\dots,k-1$. Then  
\begin{equation}\label{eqvarphigen}
  \varphi_n(\vec{e})=\arg g'(v) +\phi_n(\vec{e}) +
  \delta_n({e})\Od(\eps_n(-\log_2\eps_n)^{-\frac{1}{2}}).
\end{equation}
The constant in the notation $\Od(\eps_n(-\log_2\eps_n)^{-\frac{1}{2}})$
may depend on the distance of $v$ to the boundary $\partial D$.
\end{lemma}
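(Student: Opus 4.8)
The plan is to prove the estimate by induction on the combinatorial distance $\delta_n(e)$, using the comparison identity \eqref{eqdeltaw} of Theorem~\ref{theoCompare} to propagate the angle discrepancy face by face. First I would set up the comparison: let $\mathscr C_n$ be the constructed circle pattern (radii $r_n$, angles $\varphi_n$) and let the isoradial pattern (radii $R_n\equiv\eps_n$, angles $\phi_n$) play the role of the reference pattern. Both have the same combinatorics $\mathscr D_n$ and the same labelling $\alpha_n$, so Theorem~\ref{theoCompare} applies. Writing $w_n(z)=r_n(z)/R_n(z)$ and letting $\delta^{(0)}_n:V(G_n^*)\to\R$ be the function attached to $\varphi_n-\phi_n$ (normalised by \eqref{eqNormPhi} so that $\delta^{(0)}_n(v_0)=\arg g'(v_0)$), the identity \eqref{eqdeltaw} reads, on a face $f$ with white vertices $z_\pm$ and black vertices $v_\pm$,
\begin{equation*}
2f_{\alpha(f)}\!\left(\log\tfrac{w_n(z_+)}{w_n(z_-)}+\log\tfrac{R_n(z_+)}{R_n(z_-)}\right)-2f_{\alpha(f)}\!\left(\log\tfrac{R_n(z_+)}{R_n(z_-)}\right)=\delta^{(0)}_n(v_+)-\delta^{(0)}_n(v_-).
\end{equation*}
Since the reference pattern is isoradial, $\log(R_n(z_+)/R_n(z_-))=0$, so the left-hand side is simply $2f_{\alpha(f)}\!\bigl(t_n(z_+)-t_n(z_-)\bigr)-2f_{\alpha(f)}(0)=2f_{\alpha(f)}\!\bigl(t_n(z_+)-t_n(z_-)\bigr)-(\pi-\alpha(f))$, where $t_n=\log(r_n/R_n)$ as in Lemma~\ref{lem1}.

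Next I would compare this with the smooth target. Define $\hat\delta_n(v):=\arg g'(v)$ on black vertices; by the Cauchy–Riemann equations for $\log g'$, this smooth angle satisfies, across the face $f$, a relation of the same shape up to $\Od(\eps_n^{?})$ — concretely, $\hat\delta_n(v_+)-\hat\delta_n(v_-)=\arg g'(v_+)-\arg g'(v_-)=\Im\bigl(\log g'(v_+)-\log g'(v_-)\bigr)$, which a Taylor expansion together with $v_+-v_-=\Od(\eps_n)$ relates to $2f_{\alpha(f)}\!\bigl(h_n(z_+)-h_n(z_-)\bigr)-(\pi-\alpha(f))$ up to an error of order $\eps_n^{3}$ times the local second-order data of $g$ — this is the analogue of \eqref{eqCR1}–\eqref{eqCR6} for the ``smooth circle pattern'' built from $|g'|$ and $\arg g'$ and is exactly the content that makes the nonlinear equation a good approximation of the linear one (cf. Remark~\ref{remInterLap}). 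Subtracting the two face relations, and letting $D_n:=\delta^{(0)}_n-\hat\delta_n$ on $V(G_n^*)$, I get
\begin{equation*}
D_n(v_+)-D_n(v_-)=2f_{\alpha(f)}\!\bigl(t_n(z_+)-t_n(z_-)\bigr)-2f_{\alpha(f)}\!\bigl(h_n(z_+)-h_n(z_-)\bigr)+\Od(\eps_n^{3}).
\end{equation*}
By the mean value theorem and Lemma~\ref{lemPropf}~(i), the difference of the two $f_{\alpha(f)}$-values is bounded by $\max|f'_{\alpha(f)}|$ times $\bigl|(t_n(z_+)-h_n(z_+))-(t_n(z_-)-h_n(z_-))\bigr|$, and Lemma~\ref{lem3} bounds this last quantity by $\Od\bigl(\eps_n(-\log_2\eps_n)^{-1/2}\bigr)$ whenever $z_\pm$ and their neighbours lie in a compact subset of $D$ — which holds once $v$ is at fixed distance from $\partial D$ and $n$ is large. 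Hence $|D_n(v_+)-D_n(v_-)|=\Od\bigl(\eps_n(-\log_2\eps_n)^{-1/2}\bigr)$ for each face step, with a constant depending on that distance to the boundary. Since $D_n(v_0)=0$ by the normalisation \eqref{eqNormPhi}, summing along a shortest chain of $\delta_n(e)$ faces from $[z_0,v_0]$ to $e=[u,v]$ gives $|D_n(v)|\le \delta_n(e)\cdot\Od\bigl(\eps_n(-\log_2\eps_n)^{-1/2}\bigr)$. Recalling that $\varphi_n(\vec e)=\phi_n(\vec e)+\delta^{(0)}_n(v)$ and $\delta^{(0)}_n(v)=\arg g'(v)+D_n(v)$, this is precisely \eqref{eqvarphigen}.

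The main obstacle I expect is the passage from the discrete face-identity \eqref{eqdeltaw} to the claim that the \emph{smooth} angle increments $\arg g'(v_+)-\arg g'(v_-)$ satisfy the corresponding identity with an $\eps_n^3$ error: one must expand $\log g'$ to second order along the rhombic face, use the rhombus relations $z_j-z=a_{j-1}+a_j$, $v_j-v_{j-1}=a_j-a_{j-1}$ (as in the proof of Lemma~\ref{lem1} and Figure~\ref{faceFig}), and check that the $\Im$-part of the Cauchy–Riemann-type cancellation kills the $\eps_n^2$ terms so that only $\eps_n^3$ survives — the same mechanism that produced $\Delta t_n=\Od(\eps_n^3)$ above, but now tracked through the dual increments rather than the primal closing condition. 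A secondary point of care is bookkeeping the dependence of constants: the constant in \eqref{eqvarphigen} is allowed to blow up as $v\to\partial D$, which is exactly why Lemma~\ref{lem3} is only invoked on compact subsets and why the statement carries the proviso about the distance of $v$ to $\partial D$.
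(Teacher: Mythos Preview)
Your proposal is correct and follows essentially the same route as the paper's proof: both expand $2f_{\alpha}(t_n(z_+)-t_n(z_-))$ about $0$ on each face, use Lemma~\ref{lem3} together with the rhombus identity $2f_\alpha'(0)=(v_+-v_-)/(i(z_+-z_-))$ to identify the linear term with $\arg g'(v_+)-\arg g'(v_-)$ up to $\Od(\eps_n(-\log_2\eps_n)^{-1/2})$, and then sum along a shortest face-chain from $[z_0,v_0]$ to $e$. Your anticipated ``main obstacle'' is easier than you fear: the single-face smooth identity $\arg g'(v_+)-\arg g'(v_-)=2f_{\alpha}(h_n(z_+)-h_n(z_-))-(\pi-\alpha)$ only needs error $\Od(\eps_n^2)$ (not $\eps_n^3$), which comes directly from one-step Taylor expansions of $f_\alpha$ and $\log g'$---the $\Od(\eps_n^3)$ cancellation from Lemma~\ref{lem1} is specific to the closed sum around a vertex and is neither available nor needed here, since $\eps_n^2$ is already dominated by $\eps_n(-\log_2\eps_n)^{-1/2}$.
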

Note that if $\partial D$ is smooth, then
$\delta_n({e})=\Od(\eps_n^{-1})$. In
general we have $\delta_n({e})=\Od(\eps_n^{-1})$ on compact subsets $K\subset
D$, where the constant in the notation $\Od(\eps_n^{-1})$ may depend on
$K$. In any case, on compact subsets of $D$ we have
\begin{equation}\label{eqvarphi}
  \varphi_n(\vec{e})=\arg g'(v) +\phi_n(\vec{e}) +
  \Od((-\log_2\eps_n)^{-\frac{1}{2}}).
\end{equation}

\begin{proof}
Using the notation of Figure~\ref{anglesFig} (left),
equation~\eqref{eqestdiff} implies 
\begin{multline*}
f_{\alpha}(\log r_n(z_+) - \log r_n(z_-))
= f_{\alpha}(t_n(z_+) -t_n(z_-)) \\
= f_{\alpha}(0) + f_{\alpha}'(0)(\log
|g'(z_+)|-\log |g'(z_-)|) 
+\Od(\eps_n(-\log_2\eps_n)^{-\frac{1}{2}}).
\end{multline*}
As in Lemma~\ref{lem1} we have
$2f_{\alpha}'(0)= \frac{|v_+-v_-|}{|z_+-z_-|}= \frac{v_+-v_-}{i(z_+-z_-)}$ with
the same notation which yields
\begin{multline*}
2f_{\alpha}'(0)(\log |g'(z_+)|-\log |g'(z_-)|)\\
= \frac{v_+-v_-}{i(z_+-z_-)}\Re(a(z_+-z_-)) +
\Od(\eps_n(-\log_2\eps_n)^{-\frac{1}{2}}) \\
= \Im(a(v_+-v_-)) +\Od(\eps_n(-\log_2\eps_n)^{-\frac{1}{2}}) \\
= \arg g'(v_+)-\arg g'(v_-)+\Od(\eps_n(-\log_2\eps_n)^{-\frac{1}{2}}),
\end{multline*}
where $a=\frac{g''((z_++z_-)/2)}{g'((z_++z_-)/2)}
=\frac{g''((v_++v_-)/2)}{g'((v_++v_-)/2)}$.

By Lemma~\ref{MonoCond} we can choose the angle functions
$\phi_n$ and $\varphi_n$ on any minimal sequence of edges
$\{[z_0,v_0]=e_1,e_2,\dots,e_k=e\}\subset E({\mathscr D}_n)$ such that
equations~\eqref{eqCR1}--\eqref{eqCR6} are satisfied without the
$\!\!\pmod{2\pi}$-term. Using the above considerations of
$2f_{\alpha}(\log r_n(z_+) - \log r_n(z_-))$ and the normalization of
$\varphi_n$, we arrive at equation~\eqref{eqvarphigen}.
\end{proof}

\begin{proof}[Proof of Theorem~\ref{theoConvC1}]\label{proofConvC1beg}
Consider a compact subset $K$ of $D$. 
Let $z\in V(G_n)\cap K$ and $v\in V(G^*_n)\cap K$ be vertices which
are incident in ${\mathscr D}_n$, that is $[z,v]\in E({\mathscr D}_n)$. Then
Lemmas~\ref{lem1} and~\ref{lem2} imply that
\begin{align*}
\log g'(z) &= \log|g'(z)| +i \arg g'(z)  \\
&= \log(r_n(z)/R_n(z)) + i(\varphi_n(\overrightarrow{zv})
-\phi_n(\overrightarrow{zv})) +\Od((-\log_2\eps_n)^{-\frac{1}{2}}).
\end{align*}
As $g'$ and thus the quotient $r_n/R_n$ is uniformly bounded, we obtain
\begin{equation}
g'(z)=
\underbrace{\frac{r_n(z)}{R_n(z)}e^{i(\varphi_n(\overrightarrow{zv})
  -\phi_n(\overrightarrow{zv}))}}_{=q_n(z)}
+\Od((-\log_2\eps_n)^{-\frac{1}{2}}).
\end{equation}
This implies the uniform convergence on compact subsets of $D$ of
$q_n$ to $g'$.

Convergence of $g_n$ is now proven by using suitable integrations of
$g'$ and $q_n$.
Let $w\in V(G_n^*)$ and consider a shortest path
$\gamma$ in ${G}_n^*$ from $v_0$ to $w$ with vertices
$\{v_0=w_1,w_2,\dots,w_k=w\}\subset V({G}_n^*)$. Then 
\begin{align*}
 g(w) &=g(v_0)+ \int_{\gamma} g'(\zeta)d\zeta 
 =g(v_0)+ \sum_{j=1}^{k-1}g'(w_{j+1})(w_{j+1}-w_j) +\Od(\eps_n) \\
&= g(v_0)+ \sum_{j=1}^{k-1}q_n(w_{j+1})(w_{j+1}-w_j)
+\Od((-\log_2\eps_n)^{-\frac{1}{2}}),
\end{align*}
because $g'(w_j)-q_n(w_j)=\Od((-\log_2\eps_n)^{-\frac{1}{2}})$,
$w_{j+1}-w_j=\Od(\eps_n)$ and $k=\Od(\eps_n^{-1})$ on
compact sets. Thus it only remains to show that
\begin{equation}\label{eqintg}
  p_n(w)=g(v_0)+ \sum_{j=1}^{k-1} q_n(w_{j+1})(w_{j+1}-w_j)
  +\Od((-\log_2\eps_n)^{-\frac{1}{2}}).
\end{equation}
Remembering
\begin{align*}
q_n(v_+) &= \frac{r_n(z_+)}{R_n(z_+)}
\text{e}^{i(\varphi_n(\overrightarrow{w_{j+1}z_+})   
  -\phi_n(\overrightarrow{w_{j+1}z_+}))} +
\Od((-\log_2\eps_n)^{-\frac{1}{2}}), \\
(w_{j+1}-w_j) &=2R_n(z_+)\cos(\alpha([w_{j+1},w_j])/2)
\text{e}^{i(\phi_n(\overrightarrow{w_{j+1}z_+}) 
  -(\pi/2-\alpha([w_{j+1},w_j])/2))},
\end{align*}
we can conclude that
\begin{align*}
q_n(w_{j+1})(w_{j+1}-w_j)
&= p_n(w_{j+1})-p_n(w_j)+\Od(\eps_n(-\log_2\eps_n)^{-\frac{1}{2}}),
\end{align*}
where $z_-,z_+\in V(G)$ are incident to $w_{j+1}$ and $w_j$ and we
have used the
notations are as in Figure~\ref{anglesFig} (left) with $w_j=v_-$ and
$w_{j+1}=v_+$. As we have normalized $g(v_0)=p(v_0)$, this proves
equation~\eqref{eqintg} and therefore the uniform convergence of $p_n$
to $g$ on compact subsets of $D$.
\end{proof}\label{proofConvC1end}

\begin{remark}\label{remtheo1}
Theorem~\ref{theoConvC1} may easily be generalized in the following 
ways. First, we may consider 'nearly isoradial' circle
patterns which satisfy $R_n(z)=\Od(\eps_n)$ for all vertices $z\in V(G_n)$
and $R_n(z_1)/R_n(z_2)=1+\Od(\eps_n^3)$ for all edges $[z_1,z_2]\in E(G_n)$.

Second, we may omit the assumption that the whole domain $D$ is approximated by
the rhombic embeddings ${\mathscr D}_n$. Then the convergence claims remain
true for compact subsets of any open domain $D'\subset D$ which is covered or
approximated by the rhombic embeddings and contains $v_0$.
\end{remark}

Using the angle function instead of the radius function, we obtain the 
following analog of
Theorem~\ref{theoConvC1} for Neumann boundary conditions.

\begin{theorem}\label{theoConvC1Neumann}
Under the same assumptions as in Theorem~\ref{theoConvC1} and with the same
notation, assume further
that $\eps_n$ is sufficiently small such that for all $n\in\N$
\begin{equation}\label{eqcondg'}
\sup_{v\in D} \max_{\theta\in [0,2\pi]}|\arg
g'(v+2\eps_n\text{e}^{i\theta}) -\arg g'(v)| <\frac{\pi}{2}-C< \min_{e\in
  E(G_n)}(\pi-\alpha(e)).
\end{equation}

Define an angle function on the oriented boundary edges by
\begin{equation*}
 \varphi_n(\vec{e})=\phi_n(\vec{e}) +\arg g'(v),
\end{equation*}
where $\vec{e}=\overrightarrow{zv}\in\vec{E}({\mathscr D}_n)$ and $v\in
V(G_n^*)$.
Then there is a circle pattern ${\mathscr C}_n$
for $G_n$ and $\alpha_n$ with radius 
function $r_n$ and angle function $\varphi_n$ with these boundary values.

Suppose that this circle pattern is normalized such that
\[r_n(z_0)=R_n(z_0)|g'(z_0)|, \]
where $z_0\in V(G_n)$ is chosen such that the disk bounded by the circle
$C_{z_0}$ contains $0$.
Suppose further that ${\mathscr C}_n$ is normalized  by a translation such that
\begin{equation}
  p_n(v_0)=g(v_0),
\end{equation}
where $p_n(v)$ denotes the intersection point corresponding to $v\in
V(G_n^*)$. 
 
Then $q_n\to g'$ and $g_n\to g$ uniformly on compact
subsets in $D$ as $n\to\infty$.
\end{theorem}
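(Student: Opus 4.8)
The plan is to run the proof of Theorem~\ref{theoConvC1} with the roles of the radius function and the angle function interchanged: that is, with the graph $G_n^*$, the function $\arg g'=\Im(\log g')$, and the nonlinear Laplace equation~\eqref{eqintersectionPoint} playing the roles that $G_n$, $\log|g'|=\Re(\log g')$, and equation~\eqref{eqFgen} play in Theorem~\ref{theoConvC1}, and with the radius function recovered at the end from the discrete Cauchy--Riemann relation~\eqref{eqdeltaw}.

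First I would settle the existence of ${\mathscr C}_n$. Comparing the wanted pattern with the given isoradial pattern ${\mathscr C}_n^{\mathrm{iso}}$ (radii $R_n\equiv\eps_n$, angle function $\phi_n$) via Theorem~\ref{theoCompare}, prescribing $\varphi_n(\vec e)=\phi_n(\vec e)+\arg g'(v)$ on the oriented boundary edges amounts to prescribing the rotation function $\delta_n=\arg g'$ at the boundary black vertices $v\in V_\partial(G_n^*)$; the radius function $r_n=R_nw_n$ is then forced by~\eqref{eqdeltaw}, which for $R_n\equiv\eps_n$ reduces to
\[
t_n(z_+)-t_n(z_-)=f_{\alpha_n(f)}^{-1}\Bigl(\tfrac{\pi-\alpha_n(f)}{2}+\tfrac12\bigl(\delta_n(v_+)-\delta_n(v_-)\bigr)\Bigr),\qquad t_n:=\log(r_n/\eps_n).
\]
So the only remaining unknown is $\delta_n:V(G_n^*)\to\R$, and single-valuedness of $t_n$ around interior black vertices is exactly condition~\eqref{eqintersectionPoint}, a nonlinear discrete Laplace equation for $\delta_n$ on $G_n^*$. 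Hence existence reduces to a Dirichlet problem for the angle function on $G_n^*$, which I would solve exactly as Theorem~\ref{theoDirichlet} is proved for the radius function: a Perron family argument built on the Maximum Principle~\ref{lemMaxPrinzip}, using that ${\mathscr C}_n^{\mathrm{iso}}$ exists, and relying on Proposition~\ref{propangle} instead of Proposition~\ref{propRadius}. Condition~\eqref{eqcondg'} is precisely what makes this work: since $|v_+-v_-|\le2\eps_n$ on boundary faces, $|\arg g'(v_+)-\arg g'(v_-)|<\pi/2-C$, so the angles $\beta$ appearing in Proposition~\ref{propangle} differ from their isoradial values $(\pi-\alpha_n(f))/2$ by less than $(\pi/2-C)/2<(\pi-\alpha_n(f))/2$, which keeps the face condition~\eqref{eqanglefacecond} and the Monotonicity condition~\ref{MonoCond} valid along the boundary. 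Finally I would normalize ${\mathscr C}_n$ by the global scaling $r_n(z_0)=\eps_n|g'(z_0)|$ and the translation $p_n(v_0)=g(v_0)$; by Proposition~\ref{propangle} this determines ${\mathscr C}_n$ uniquely.

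Next I would prove the a priori estimates for the angle function, dualizing Lemmas~\ref{lem1} and~\ref{lem3}. Set $u_n(v)=\delta_n(v)-\arg g'(v)$. Differentiating~\eqref{eqintersectionPoint} as in Remark~\ref{remInterLap} yields a linear discrete Laplace equation for $\delta_n$ on $G_n^*$ whose weights, at the isoradial reference, equal $1/f_{\alpha_n(f_j)}'(0)=2\tan(\alpha_n(f_j)/2)$ and hence, by~\eqref{boundalpha}, are bounded above and below. Since $\arg g'$ is smooth and harmonic and the faces of ${\mathscr D}_n$ are rhombi, the same Taylor expansion as in the proof of Lemma~\ref{lem1} should show that the discrete Laplacian on $G_n^*$ applied to $u_n$ is $\Od(\eps_n^3)$ at interior black vertices, the first-order terms cancelling by the rhombus structure (and $\eps_n^2\sum_j\sin\alpha_n(f_j)$, the area of the faces incident to $v$, being $\gtrsim\eps_n^2$). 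Adding $\beta|v|^2$ with $\beta=C\eps_n$ then makes $u_n+\beta|v|^2$ subharmonic for $G_n^*$; since $u_n\equiv0$ on $V_\partial(G_n^*)$, the maximum principle gives $u_n=\Od(\eps_n)$ on $V(G_n^*)$, and Proposition~\ref{propSC}, applied to the associated positive superharmonic function exactly as in the proof of Lemma~\ref{lem3} (the faces of ${\mathscr D}_n$ have area $\gtrsim\eps_n^2$ and diameter $\Od(\eps_n)$, so $A_R$ is bounded uniformly in $n$ and $R$), upgrades this to $u_n(v_j)-u_n(v)=\Od(\eps_n(-\log_2\eps_n)^{-1/2})$ for incident $v,v_j\in V(G_n^*)$ contained in any fixed compact $K\subset D$.

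Finally I would recover the radius function and conclude as in Theorem~\ref{theoConvC1}. Expanding the displayed relation for $t_n(z_+)-t_n(z_-)$ about $0=f_{\alpha_n(f)}^{-1}((\pi-\alpha_n(f))/2)$ and using $(f_{\alpha_n(f)}^{-1})'((\pi-\alpha_n(f))/2)=1/f_{\alpha_n(f)}'(0)$, the angle estimate above, the expansion $\arg g'(v_+)-\arg g'(v_-)=\Im(\tfrac{g''}{g'}(v_+-v_-))+\Od(\eps_n^2)$, and the rhombus identity $v_+-v_-=2if_{\alpha_n(f)}'(0)(z_+-z_-)$ from the proof of Lemma~\ref{lem2}, I expect $(t_n-h_n)(z_+)-(t_n-h_n)(z_-)=\Od(\eps_n(-\log_2\eps_n)^{-1/2})$ on compacta, where $h_n=\log|g'|$. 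Summing these differences along a path in $G_n$ of combinatorial length $\Od(\eps_n^{-1})$ starting at $z_0$ (where $t_n(z_0)=h_n(z_0)$ by the normalization) gives $t_n-h_n=\Od((-\log_2\eps_n)^{-1/2})$ on compact subsets of $D$, while the same summation using only the cruder bound $u_n=\Od(\eps_n)$ shows that $r_n/R_n$ is uniformly bounded. Together with $\varphi_n(\overrightarrow{zv})-\phi_n(\overrightarrow{zv})=\delta_n(v)=\arg g'(v)+\Od(\eps_n)$ this gives $g'(z)=q_n(z)+\Od((-\log_2\eps_n)^{-1/2})$ on compacta exactly as in Theorem~\ref{theoConvC1}, hence $q_n\to g'$ uniformly on compact subsets, and then $g_n\to g$ by discrete integration of $q_n$ along shortest paths in $G_n^*$, verbatim as in the derivation of~\eqref{eqintg}. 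The main obstacle is the existence step: solving a Dirichlet problem for the \emph{angle} rather than the radius function (the analog of Theorem~\ref{theoDirichlet} via Proposition~\ref{propangle}) and checking that condition~\eqref{eqcondg'} keeps all of Proposition~\ref{propangle}'s monotonicity and angle inequalities valid; once ${\mathscr C}_n$ exists, everything else is the dual of the proof of Theorem~\ref{theoConvC1}.
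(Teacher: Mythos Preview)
Your convergence argument is essentially the paper's own: interchange the roles of $\delta_n=\varphi_n-\phi_n$ and $\log(r_n/R_n)$, replace $\log|g'|=\Re(\log g')$ by $\arg g'=\Im(\log g')$ and equation~\eqref{eqFgen} by~\eqref{eqintersectionPoint}, and rerun Lemmas~\ref{lem1}, \ref{lem3}, \ref{lem2} on $G_n^*$. The paper states exactly this plan and gives no further detail, so your dualized estimates and the final integration step are what the author intends.

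The one substantive difference is the existence step. The paper does not attempt a Perron argument for the angle function; it simply cites \cite[Theorem~3]{BS02} for circle patterns with prescribed boundary angles, and condition~\eqref{eqcondg'} is there precisely to meet the hypotheses of that external theorem. Your proposed route via a Dirichlet problem for $\delta_n$ on $G_n^*$ is reasonable in spirit --- $f_\alpha^{-1}$ is monotone, so a comparison principle analogous to Lemma~\ref{lemMaxPrinzip} should hold --- but there is a real obstacle you glide over: in the radius Perron argument any positive function is admissible as a subsolution, whereas here every member of the Perron family must keep $\beta_j\in(0,\pi-\alpha_j)$ at \emph{every interior} face for $f_{\alpha_j}^{-1}(\beta_j)$ to even be defined. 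Condition~\eqref{eqcondg'} controls the boundary data, but propagating the constraint~\eqref{eqanglefacecond} into the interior along the whole Perron family requires a barrier argument you have not supplied. The external reference sidesteps this entirely, which is why the paper's proof is three sentences; if you prefer a self-contained argument you will have to close this gap.
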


\begin{proof}
The existence claim for the circle pattern with Neumann
boundary conditions follows from~\cite[Theorem 3]{BS02} using the
assumption~\eqref{eqcondg'}.

Theorem~\ref{theoCompare} shows that the difference $\varphi_n -\phi_n$ gives
rise to a function $\delta_n:V(G_n^*)\to\R$ with boundary values given by $\arg
g'$.
The proof of the convergence claim is similar to the proof of
Theorem~\ref{theoConvC1}. The roles of $\delta_n= \varphi_n-\phi_n$ and
$\log(r_n/R_n)$
have to be interchanged in Lemmas~\ref{lem1}, \ref{lem3},
and~\ref{lem2} and similarly $\arg g'=\Im(\log g')$ has to be considered
instead of $\log|g'|=\Re(\log g')$. The role of equation~\eqref{eqFgen}
is substituted by equation~\eqref{eqintersectionPoint}.
\end{proof}

\section{Quasicrystallic circle patterns}\label{secQuasiCirc}

The order of convergence in Theorems~\ref{theoConvC1}
and~\ref{theoConvC1Neumann} can be improved for a special class of isoradial
circle patterns with a uniformly bounded number of different edge directions
and a local deformation property. In the following, we introduce suitable
terminology and some useful results.

As the kites of an isoradial circle pattern are in fact rhombi, an embedded
isoradial circle pattern leads to a {\em rhombic embedding in $\C$} of the
corresponding b-quad-graph ${\mathscr D}$. Conversely, adding circles with
centers in the white vertices of a rhombic embedding and
radius equal to the edge length results in an embedded
isoradial circle pattern.

Given a rhombic embedding of a b-quad-graph ${\mathscr D}$, consider for each
directed edge $\vec{e}\in \vec{E}({\mathscr D})$ the vector of its embedding
as  a  complex number with length one. Half of the number of
different values of these directions is called the
{\em  dimension} $d$ of the rhombic embedding. If $d$ is finite, the  rhombic
embedding is called {\em quasicrystallic}.
A circle pattern for a b-quad-graph
${\mathscr D}$ is called a {\em quasicrystallic circle pattern} if there
exists a quasicrystallic rhombic embedding of ${\mathscr D}$ and if the
intersection angles are taken from this rhombic embedding.
The comparison function of the isoradial circle pattern ${\mathscr
C}_1$ for ${\mathscr D}$
and the quasicrystallic circle pattern ${\mathscr C}_2$ will also be
called {\em comparison function for ${\mathscr C}_2$}.

Quasicrystallic circle patterns were introduced in~\cite{BMS05}.
Certainly, this property only makes sense for infinite graphs or
infinite sequences of graphs with growing number of vertices and
edges.

In the following we will identify the b-quad-graph
${\mathscr D}$ with a rhombic embedding of ${\mathscr D}$.

\subsection{Quasicrystallic rhombic embeddings and $\Z^d$}\label{secQuasiZd}

\begin{floatingfigure}[l]{4.83cm}
\begin{center}
\includegraphics[height=4cm]{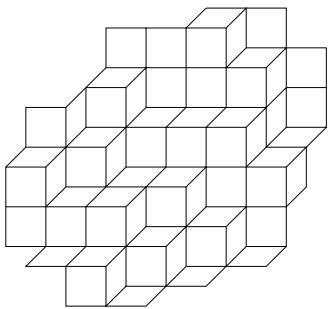}
\end{center}
\caption{An example of a combinatorial surface $\Omega_{\mathscr
    D}\subset \Z^3$.}\label{figExOmegaD}
 \end{floatingfigure}
Any rhombic embedding of a connected b-quad-graph ${\mathscr D}$ can be seen as
a sort of projection of a certain two-dimensional subcomplex (combinatorial
surface) $\Omega_{\mathscr D}$ of the multi-dimen\-sio\-nal lattice $\Z^d$ (or
of a multi-dimen\-sio\-nal lattice $\cal L$ which is
isomorphic to $\Z^d$). An illustrating example is given in 
Figure~\ref{figExOmegaD}.

The combinatorial surface
$\Omega_{\mathscr D}$ in $\Z^d$ can be constructed in the following way.
Denote the set of the different edge directions of $\mathscr D$ by ${\cal
A}=\{\pm
a_1,\dots,\pm a_d\}\subset \Sp^1$. We suppose that $d>1$ and that any two
non-opposite elements of ${\cal A}$ are linearly independent over $\R$.
Let ${\mathbf e}_1,\dots,{\mathbf e}_d$ denote the standard orthonormal basis of
$\R^d$. Fix a white vertex $x_0\in V({\mathscr D})$ and
the origin of $\R^d$. Add the edges of $\{\pm {\mathbf e}_1,\dots,\pm {\mathbf
e}_d\}$ at the origin which correspond to
the edges of $\{\pm a_1,\dots,\pm a_d\}$ incident to $x_0$ in
$\mathscr D$, together with their endpoints. Successively continue the
construction at the new endpoints. Also, add two-dimensional facets (faces) of
$\Z^d$ corresponding to faces of $\mathscr D$, spanned by incident
edges.

A combinatorial surface $\Omega_{\mathscr D}$ in $\Z^d$
corresponding to a quasicrystallic rhombic embedding can be
characterized using the following monotonicity
property, see~\cite[Section~6]{BMS05} for a proof.
\begin{lemma}[Monotonicity criterium]\label{lemMonoton}
Any two points of $\Omega_{\mathscr D}$ can be connected by a path in
$\Omega_{\mathscr D}$ with all
directed edges lying in one $d$-dimensional octant, that is all
directed edges of this path are elements of one of the $2^d$ subsets of $\{\pm
{\mathbf e}_1,\dots, \pm {\mathbf e}_d\}$ containing $d$ linearly independent
vectors. 
\end{lemma}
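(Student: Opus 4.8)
The plan is to pass to the combinatorial surface and reduce the statement to a purely lattice-theoretic one. First I would observe that it suffices to connect two vertices $X$ and $Y$ of $\Omega_{\mathscr D}$. Writing $X=(X_1,\dots,X_d)$, $Y=(Y_1,\dots,Y_d)$ in the coordinates of $\Z^d$, I would set $\sigma_k=\operatorname{sign}(Y_k-X_k)\in\{\pm1\}$ (and $\sigma_k=+1$ if $Y_k=X_k$); since any octant-path from $X$ to $Y$ can only use the steps $\sigma_k{\mathbf e}_k$ on the coordinates with $X_k\neq Y_k$, this octant is the only candidate. After the coordinate change $x_k\mapsto\sigma_k x_k$, ${\mathbf e}_k\mapsto\sigma_k{\mathbf e}_k$ the claim becomes: \emph{$X$ and $Y$ are joined in $\Omega_{\mathscr D}$ by a path that is non-decreasing in every coordinate.} Such a path stays in the finite box $B=\prod_k[\min(X_k,Y_k),\max(X_k,Y_k)]$, so $N=\sum_k|X_k-Y_k|<\infty$ is available for an induction.

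I would then induct on $N$. The case $N=0$ is $X=Y$. For the inductive step it is enough to exhibit one edge of $\Omega_{\mathscr D}$ at $X$ in a direction ${\mathbf e}_k$ with $X_k<Y_k$: moving along it to a vertex $X'$ fixes the remaining coordinates, stays in $B$, lowers $N$ by one, and prepending this edge to the monotone path $X'\to Y$ from the induction hypothesis finishes the argument. To locate such an edge I would invoke the de~Bruijn (ribbon) structure: for each direction $k$ the facets of $\Omega_{\mathscr D}$ of type $\{k,\cdot\}$ glued along their $k$-edges form maximal strips, each lying between two consecutive values $x_k=m$ and $x_k=m+1$; ribbons of equal type are disjoint, hence linearly ordered, ribbons of different type share at most one facet, and each ribbon separates $\Omega_{\mathscr D}$ --- this last point is where simple connectedness of $\mathscr D$ is used. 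Hence $|X_k-Y_k|$ is the number of $k$-ribbons separating $X$ from $Y$, each of which must be crossed from lower to higher $x_k$ with no other ribbon crossed at all, and an edge of the wanted kind at $X$ is exactly the statement that $X$ lies on the innermost separating ribbon of some type $k$.

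The hard part --- really the whole difficulty --- is to show that for $X\neq Y$ this does occur, i.e.\ that $X$ is not \emph{blocked} (with every incident edge either pointing in a direction $-{\mathbf e}_k$, or lying in a coordinate where $X$ already agrees with $Y$). Angular estimates on the star of edges around $X$ do not suffice, since for $d\geq3$ the projected directions ${\mathbf e}_k$ may positively span $\C$; one genuinely needs the global monotonicity of lifts of simply connected rhombic embeddings. I would prove it by showing that the set $A$ of vertices reachable from $X$ by coordinate-monotone paths inside $\Omega_{\mathscr D}\cap B$ cannot be a proper ``source'' subset: if $Y\notin A$, the maximiser of $\sum_k x_k$ over the finite set $A$ would be a blocked vertex different from the unique maximiser $Y$ of $\sum_k x_k$ over $\Omega_{\mathscr D}\cap B$, and tracking the innermost separating ribbons through the finitely many ribbons meeting the vertices of $A$, together with the at-most-one-intersection property of differently typed ribbons and simple connectedness, rules this out. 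This bookkeeping is entirely combinatorial-topological, involves no estimates, and is carried out in detail in \cite[Section~6]{BMS05}; granting it, the induction on $N$ produces the octant-path, proving the lemma.
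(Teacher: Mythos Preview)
The paper does not actually prove this lemma: it states the result and immediately refers the reader to \cite[Section~6]{BMS05} for the proof. Your proposal goes further than the paper in that you sketch the inductive framework (reduction to the unique admissible octant, induction on $N=\sum_k|X_k-Y_k|$, and the de~Bruijn ribbon structure), but at the decisive combinatorial step --- ruling out a blocked vertex --- you likewise defer to \cite[Section~6]{BMS05}. So there is nothing to compare: both the paper and your write-up rest on the same external reference, and your outline is consistent with the argument given there.

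One minor point worth tightening if you flesh this out: your claim that ``$Y$ is the unique maximiser of $\sum_k x_k$ over $\Omega_{\mathscr D}\cap B$'' is correct only because after your sign change $B=\prod_k[X_k,Y_k]$ has $Y$ as its coordinatewise-maximal corner; you should also note that your induction hypothesis applies to the pair $(X',Y)$ with the \emph{same} octant, which holds since moving along $+\mathbf e_k$ with $X_k<Y_k$ cannot flip any coordinate sign. These are easy checks, not gaps.
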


An important class of examples of rhombic embeddings of b-quadgraphs
can be constructed using ideas of the grid projection method
for quasiperiodic tilings of the plane; see for example~\cite{DK,GR,Se}.

\begin{example}[Quasicrystallic rhombic embedding obtained from a
plane]\label{exquasi}
 Let $E$ be a two-dimensional plane in $\R^d$ and $t\in E$.
Let ${\mathbf e}_1,\dots,{\mathbf e}_d$ be the standard orthonormal basis of
$\R^d$. We assume
that $E$ does not contain any of the segments $s_j=\{ {\mathbf t}
+\lambda{\mathbf e}_j : \lambda \in [0,1]\}$ for $j=1,\dots,d$.
Then we can choose positive
numbers $c_1,\dots, c_d$ such that the orthogonal projections
$P_E(c_j{\mathbf e}_j)$ have length 1.
(If $E$ contains two different segments $s_{j_1}$ and $s_{j_2}$, the following
construction only leads to the
standard square grid pattern $\Z^2$.  If $E$ contains exactly one such segment
$s_j$, then the construction may be adapted for the
remaining dimensions excluding ${\mathbf e}_j$.)
We further assume that the orthogonal projections
onto $E$ of the two-dimensional facets $E_{j_1,j_2}= \{\lambda_1
{\mathbf e}_{j_1}+\lambda_2{\mathbf e}_{j_2}: \lambda_1, \lambda_2\in[0,1]\}$
for $1\leq j_1<j_2\leq d$ are non-degenerate parallelograms.

Consider around each vertex ${\mathbf p}$ of the
lattice ${\cal L}=c_1\Z\times\dots \times c_d\Z$ the hypercuboid
$V=[-c_1/2,c_1/2]\times \dots \times [-c_d/2,c_d/2]$, that is the Voronoi
cell ${\mathbf p}+V$.
These translations of $V$ cover $\R^d$.
We build an infinite monotone two-dimensional surface $\Omega^{\cal L}(E)$ in
${\cal L}$ by the following construction.
 The basic idea is illustrated in Figure~\ref{figExquasi} (left).

If $E$ intersects the interior
of the Voronoi cell of a lattice point (i.e.\ $({\mathbf p}+V)^\circ\cap
E\not=
\emptyset$ for ${\mathbf p}\in {\cal L}$), then this point belongs to
$\Omega^{\cal L}(E)$. Undirected edges correspond to intersections
of $E$ with
the interior of a $(d-1)$-dimensional facet bounding two
Voronoi cells. Thus we get a connected graph
in ${\cal L}$. An intersection of $E$ with the interior
of a translated $(d-2)$-dimensional facet of $V$ corresponds to a
rectangular two-dimensional face of the lattice. By construction, the
orthogonal projection
of this graph onto $E$ results in a planar connected graph whose faces
are all of even degree ($=$ number of incident edges or of incident
vertices). A face of degree bigger than 4 corresponds to an
intersection of $E$ with the translation of a $(d-k)$-dimensional facet
of $V$ for some $k\geq 3$. Consider the vertices and edges of such a
face and the corresponding points and edges in the lattice ${\cal L}$. These
points lie on a combinatorial
$k$-dimensional hypercuboid contained in ${\cal L}$.
By construction, it is easy to see that 
there are two points of the $k$-dimensional
hypercuboid which are each incident to $k$ of the given vertices. We choose
a point with least distance from $E$ and add it to the
surface. Adding edges to neighboring vertices splits
the face of degree $2k$ into $k$ faces of degree 4.
Thus we obtain an infinite monotone two-dimensional combinatorial surface
$\Omega^{\cal L}(E)$ which projects to an infinite rhombic embedding covering
the whole plane $E$.
\end{example}

\begin{figure}[tb]
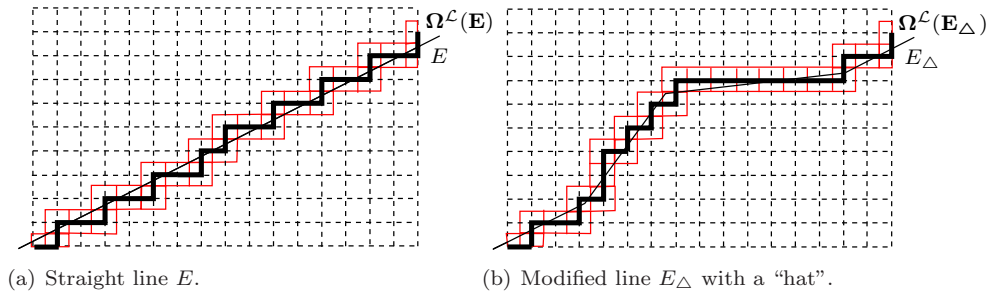

\begin{center}
\subfigure[Straight line $E$.]{
\input{ExampleOmegaEZ2.pstex_t}
}
\subfigure[Modified line $E_\triangle$ with a ``hat''.]{
\input{ExampleOmegaEtriangleZ2.pstex_t}
}
\end{center}
\hspace*{1ex}
\caption{Example for the usage of the grid-projection method in
  $\Z^2$. The Voronoi cells which contain $\Omega^{\cal L}$
are marked in red.}\label{figExquasi}
\end{figure}

\begin{example}[Modification of the construction in
Example~\ref{exquasi}]\label{exquasihat}
The method used in the preceding example can be modified to result in similar
but
different rhombic embeddings.  The basic idea
is illustrated in Figure~\ref{figExquasi} (right).

Let $E$ be
a two-dimensional plane in $\R^d$ satisfying the same assumptions as in
Example~\ref{exquasi}.
Let ${\mathbf N}\not=0$ be a vector orthogonal to $E$. Let $\triangle$ be an
equilateral triangle in $E$ with vertices ${\mathbf t}_1,{\mathbf
t}_2,{\mathbf
t}_3$ and let $\mathbf s$ be
the intersection point of the bisecting lines of the angles. Consider the
two-dimensional facets of the three-dimensional tetrahedron $T(
\triangle,{\mathbf N})$ spanned by
the four vertices ${\mathbf t}_1,{\mathbf t}_2,{\mathbf t}_3,{\mathbf s}
+{\mathbf N}$. Exactly one of these facets is completely
contained in $E$ (this is the triangle $\triangle$). We remove the
triangle from $E$ and add instead the remaining facets of
$T(\triangle,{\mathbf N})$. Let $E_\triangle$
be the resulting two-dimensional surface. Note that $E_\triangle$ is
orientable like $E$.
Define $\gamma\in(0,\pi/2)$ by $\gamma= \arctan(
2\sqrt{3}\|{\mathbf N}\|/\|{\mathbf t}_1-{\mathbf t}_2\|)$, where $\|\cdot\|$
denotes the Euclidean norm of vectors in $\R^d$. $\gamma$ is the acute
angle between $E$ and the two-dimensional facets of 
$T(\triangle,{\mathbf N})$ not contained in $E$.

If $\gamma$ is small enough, then our
assumptions imply that we can apply the same construction algorithm to
$E_\triangle$ as for the plane $E$ in the previous example and obtain a
monotone surface $\Omega^{\cal L}(E_\triangle)$.
The orthogonal
projection of $\Omega^{\cal L}(E_\triangle)$ onto $E$ is a rhombic embedding
which coincides with the rhombic embedding from $\Omega^{\cal L}(E)$ except
for a finite part.
\end{example}

The quasicrystallic rhombic embedding of Example~\ref{exquasihat} may also
obtained using the following general concept of
local changes of rhombic embeddings.

\begin{definition}\label{defFlip}
Let ${\mathscr D}$ be a rhombic embedding of a finite simply connected
b-quad-graph with
corresponding combinatorial surface $\Omega_{\mathscr D}$ in $\Z^d$.
\begin{floatingfigure}[p]{5cm}
 \begin{center}
 \includegraphics[height=1.5cm]{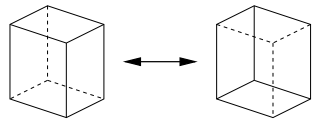}
 \end{center}
 \caption{A flip of a three-dimensional cube. The dashed edges and their
incident faces are not part of
the surface in $\Z^d$.}\label{figflip}
\end{floatingfigure}
Let $\hat{\mathbf z}\in V_{int}(\Omega_{\mathscr D})$ be an interior vertex
with exactly three incident two-dimensional facets of $\Omega_{\mathscr D}$.
Consider the
three-dimen\-sio\-nal cube with these boundary facets. Replace 
the three given facets with the three other two-dimensional facets of
this cube. This procedure is called a {\em flip}; see Figure~\ref{figflip} for
an illustration.

A vertex ${\mathbf z}\in\Z^d$ {\em can be  reached with
flips from $\Omega_{\mathscr D}$} if ${\mathbf z}$ is contained in a
combinatorial surface obtained
from $\Omega_{\mathscr D}$ by a suitable sequence of flips.
The set of all
vertices which can be reached with flips (including $V(\Omega_{\mathscr D})$)
will be denoted by ${\cal F}(\Omega_{\mathscr D})$.
\end{definition}

For further use we enlarge the set ${\cal F}(\Omega_{\mathscr D})$ of vertices
which can
be reached by flips from $\Omega_{\mathscr D}$ in the following way.
For a set of vertices $W\subset V(\Z^d)$ denote by $W^{[1]}$ the set $W$
together with all vertices incident to a two-dimensional facet of $\Z^d$ where
three of its four vertices belong to $W$. Define $W^{[k+1]}=(W^{[k]})^{[1]}$
inductively for all $k\in\N$. In particular, we denote for some arbitrary, but
fixed $\kappa\in\N$
\[{\cal F}_\kappa(\Omega_{\mathscr D})=({\cal F}(\Omega_{\mathscr
D}))^{[\kappa]}.\]

\subsection{Quasicrystallic circle patterns and integrability}\label{secCircInt}

Let $\mathscr D$ be a quasicrystallic rhombic embedding of a b-quad-graph.
The combinatorial surface $\Omega_{\mathscr D}$ in $\Z^d$ is
important by its connection with integrability. See also~\cite{BS08} for a
more detailed presentation and a deepened study of integrability and
consistency.

In particular, a
function defined on the vertices of $\Omega_{\mathscr D}$ which satisfies some
3D-consistent equation on all faces of $\Omega_{\mathscr D}$
can uniquely be extended to the {\em brick}
\begin{equation*}
\Pi(\Omega_{\mathscr D}) :=\{{\mathbf n}=(n_1,\dots, 
n_d)\in \Z^d:a_k(\Omega_{\mathscr D})\leq n_k\leq b_k(\Omega_{\mathscr D}),\
k=1,\dots, d\},
\end{equation*}
where $a_k(\Omega_{\mathscr D})= \min_{{\mathbf n}\in V(\Omega_{\mathscr
D})}n_k$ and $b_k(\Omega_{\mathscr D})= \max_{{\mathbf n}\in V(\Omega_{\mathscr
D})}n_k$.
Note that $\Pi(\Omega_{\mathscr D})$ is the hull of
$\Omega_{\mathscr D}$. A proof may be found in~\cite[Section~6]{BMS05}.
This extension of a function using a 3D-consistent equation will now be applied
for the comparison function $w$ defined in~\eqref{eqdefw} of two circle
patterns. In particular, we take for ${\mathscr C}_1$ the isoradial circle
pattern which corresponds to the quasicrystallic rhombic embedding $\mathscr
D$. Given another circle pattern ${\mathscr C}_2$ for $\mathscr D$ with the
same intersection angles, let $w$ be the comparison function for
${\mathscr C}_2$. Note that the Hirota Equation~\eqref{eqw} is
3D-consistent; see Sections~10 and~11 of~\cite{BMS05} for more details.
Thus $w$ considered as a function on
$V(\Omega_{\mathscr D})$ can uniquely be extended to the brick
$\Pi(\Omega_{\mathscr D})$ such that equation~\eqref{eqw} 
holds on all two-dimensional facets.
Additionally, $w$ and its extension are real valued on
white points $V_w(\Omega_{\mathscr D})$ and has value in $\Sp^1$ for black
points $V_b(\Omega_{\mathscr D})$.
This can easily be deduced from the Hirota Equation~\eqref{eqw}.

The extension of $w$ can be used to define a radius function for any rhombic
embedding with the same boundary faces as $\mathscr D$.

\begin{lemma}\label{lem2patt}
Let ${\mathscr D}$ and ${\mathscr D}'$ be two simply connected finite
rhombic embeddings of b-quad-graphs with the same edge directions. Assume that
${\mathscr D}$ and ${\mathscr D}'$ agree on all boundary faces. Let
${\mathscr C}$ be an (embedded) planar circle pattern for ${\mathscr D}$
and the labelling given by the rhombic embedding. Then there is an
(embedded) planar circle
pattern ${\mathscr C}'$ for ${\mathscr D}'$ which agrees with
${\mathscr C}$ for all boundary circles.
\end{lemma}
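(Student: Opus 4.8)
The plan is to transport the circle pattern ${\mathscr C}$ from ${\mathscr D}$ to ${\mathscr D}'$ using the integrability of the Hirota equation. Since ${\mathscr D}$ is a rhombic embedding, placing at each white vertex a circle whose radius equals the common edge length yields an embedded isoradial circle pattern ${\mathscr C}_1$ for ${\mathscr D}$, and likewise one gets an isoradial pattern ${\mathscr C}_1'$ for ${\mathscr D}'$ of the same radius. Let $w:V({\mathscr D})\to\C$ be the comparison function~\eqref{eqdefw} of ${\mathscr C}_1$ and ${\mathscr C}$; it is real and positive on white vertices, has modulus $1$ on black vertices, and satisfies the Hirota Equation~\eqref{eqw} on every face of ${\mathscr D}$. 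Realize ${\mathscr D}$ and ${\mathscr D}'$ as combinatorial surfaces $\Omega_{\mathscr D},\Omega_{{\mathscr D}'}\subset\Z^d$ ($2d$ being the common number of edge directions), built as in Section~\ref{secQuasiZd} starting from the same white vertex of the shared boundary. Since ${\mathscr D}$ and ${\mathscr D}'$ agree on all boundary faces, $\Omega_{\mathscr D}$ and $\Omega_{{\mathscr D}'}$ have the very same boundary in $\Z^d$.

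Next, by the $3$D-consistency of~\eqref{eqw}, extend $w$ uniquely to the brick $\Pi(\Omega_{\mathscr D})$ so that~\eqref{eqw} holds on every two-dimensional facet; the extension is again real on white points and unimodular on black points. Because $\Omega_{\mathscr D}$ and $\Omega_{{\mathscr D}'}$ are monotone surfaces spanning the same boundary, Lemma~\ref{lemMonoton} forces $V(\Omega_{{\mathscr D}'})\subset\Pi(\Omega_{\mathscr D})$ (in fact $\Omega_{{\mathscr D}'}$ is reachable from $\Omega_{\mathscr D}$ by a finite sequence of flips as in Definition~\ref{defFlip}, so $V(\Omega_{{\mathscr D}'})\subset{\cal F}(\Omega_{\mathscr D})\subset\Pi(\Omega_{\mathscr D})$); see~\cite{BMS05}. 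Restricting the extended $w$ to $V(\Omega_{{\mathscr D}'})$ thus gives a function solving~\eqref{eqw} on every face of ${\mathscr D}'$. Writing its values on the black vertices of ${\mathscr D}'$ as $e^{i\delta}$ and lifting $\delta$ to an $\R$-valued function — which is well defined because, exactly as in the proof of Theorem~\ref{theoCompare}, the left-hand side of~\eqref{eqdeltaw} sums to zero around every interior white vertex of ${\mathscr D}'$ — one obtains a positive function $w$ on the white vertices and a real function $\delta$ on the black vertices of ${\mathscr D}'$ satisfying equation~\eqref{eqdeltaw} on every face of ${\mathscr D}'$ relative to the base pattern ${\mathscr C}_1'$. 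The converse part of Theorem~\ref{theoCompare} then produces a planar circle pattern ${\mathscr C}'$ for ${\mathscr D}'$ with radius function $(r_{{\mathscr C}_1'}w)$ and angle function $(\varphi_{{\mathscr C}_1'}+\delta)$, unique up to translation.

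It remains to verify the boundary condition and the embeddedness claim. On the common boundary faces the values of $w$ are those read directly off ${\mathscr C}$, and $r_{{\mathscr C}_1'}=r_{{\mathscr C}_1}$ there, so the boundary radii of ${\mathscr C}'$ — and, after the allowed translation, its boundary angle data — coincide with those of ${\mathscr C}$; hence ${\mathscr C}'$ shares all boundary circles with ${\mathscr C}$. For embeddedness, follow the flip sequence step by step: a single flip replaces the three kites of the current pattern around an interior vertex by the three kites around the opposite vertex of the same combinatorial cube, and by $3$D-consistency together with the positivity and unimodularity of $w$ these again cover the same hexagonal region with pairwise disjoint interiors, so embeddedness is preserved at each step. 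I expect this last part — guaranteeing that the extended $w$ keeps positive values on the white and unimodular values on the black vertices of $\Omega_{{\mathscr D}'}$, and that the intermediate kite configurations stay non-degenerate — to be the main obstacle; the remainder is a direct combination of Theorem~\ref{theoCompare} with the integrability machinery of Section~\ref{secCircInt}.
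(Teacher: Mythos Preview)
Your approach is essentially the paper's: lift both rhombic embeddings to monotone surfaces in $\Z^d$ with the same boundary, extend the comparison function $w$ to the common brick via the $3$D-consistency of the Hirota equation, and read off the pattern ${\mathscr C}'$ on $\Omega_{{\mathscr D}'}$. The differences are only in how the last two steps are justified.

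For the existence of the immersed pattern, you route through Theorem~\ref{theoCompare} and therefore need $w>0$ on white vertices, which you rightly flag as the delicate point. The paper avoids this by arguing geometrically: equation~\eqref{eqw} forces each rhombus of $\Omega_{{\mathscr D}'}$ to map to a closed kite, the combinatorics force the kite chains to close around every vertex, and then, because the boundary kites coincide with those of the immersed pattern ${\mathscr C}$, the angle sum at each interior white vertex is forced to be exactly $2\pi$ rather than some other multiple. This topological step replaces your algebraic positivity check. (The reality/unimodularity of the extended $w$ is already recorded in the paragraph preceding the lemma, so that part of your worry is settled.)

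For embeddedness, the paper's argument is simpler than your flip-by-flip one: once ${\mathscr C}'$ is known to be immersed and to agree with the embedded pattern ${\mathscr C}$ on all boundary kites, it must itself be embedded (an immersion of a disk that restricts to an embedding on a boundary collar is a global embedding). You do not need to track the intermediate configurations.
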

\begin{proof}
Consider the monotone combinatorial surfaces $\Omega_{\mathscr D}$ and
$\Omega_{\mathscr D}'$. Without loss of generality, we can assume that
$\Omega_{\mathscr D}$ and $\Omega_{\mathscr D}'$ have the same
boundary faces in $\Z^d$. Thus they both define the same brick
$\Pi(\Omega_{\mathscr D}) =\Pi(\Omega_{\mathscr D}')=:\Pi$. Given the
circle pattern ${\mathscr C}$, define the function $w$ on
$V(\Omega_{\mathscr D})$ by~\eqref{eqdefw}. Extend $w$ to the brick
$\Pi$ such that condition~\eqref{eqw} holds for all two-dimensional
facets. Consider $w$ on $\Omega_{\mathscr D}'$ and build the
corresponding pattern ${\mathscr C}'$, such that the points on the boundary
agree with
those of the given circle pattern ${\mathscr C}$. Equation~\eqref{eqw}
guarantees that all rhombi of $\Omega_{\mathscr D}'$
are mapped to closed kites. Due to the combinatorics, the chain of
kites is closed around each vertex. Since the boundary kites of ${\mathscr C}'$
are given by
${\mathscr C}$ which is an immersed circle pattern,
at every interior white point the angles of 
the kites sum up to $2\pi$. Thus ${\mathscr C}'$ is an immersed circle pattern.

Furthermore, ${\mathscr C}'$ is embedded if ${\mathscr C}$ is, because
${\mathscr C}'$ is an immersed circle pattern and ${\mathscr C}'$ and ${\mathscr
C}$ agree for all boundary kites.
\end{proof}

\section{$C^\infty$-convergence for quasicrystallic circle
 patterns}\label{secConvQuasiCinfty}

In order to improve the order of convergence in Theorem~\ref{theoConvC1}
we study partial derivatives of the extended radius function
using the integrability
of the Hirota equation~\eqref{eqw} and a Regularity Lemma~\ref{lemRegIso1}.

The following constants are useful
to estimate the possible orders of partial derivatives for a function
defined on ${\cal F}_\kappa(\Omega_{\mathscr D})$.
Note that ${\cal F}_\kappa(\Omega_{\mathscr D})\subset \Pi(\Omega_{\mathscr
D})$.

\begin{definition}\label{defC12}
Let ${\mathscr D}$ be a rhombic embedding of a finite simply connected
b-quad-graph with
corresponding combinatorial surface $\Omega_{\mathscr D}$ in $\Z^d$.
Let $J\subset \{1,\dots,d\}$ contain at least two different indices.

For $B\geq 0$ define a {\em combinatorial ball} of
radius $B$ about ${\mathbf z}\in V(\Z^d)$ using the
directions $\{\mathbf{e}_{j}: j\in J\}$ by
\begin{equation}
 U_{J}({\mathbf z},B)=\{{\mathbf \zeta}={\mathbf z}
+\sum_{j\in J}n_{j}\mathbf{e}_{j} : \sum_{j\in J}|n_{j}|\leq B\}. 
\end{equation}
The {\em radius of the largest ball} about ${\mathbf z}$ using these
directions
which is contained in ${\cal F}_\kappa(\Omega_{\mathscr D})$ is denoted by
$  B_{J}({\mathbf z},{\cal F}_\kappa(\Omega_{\mathscr D}))
=\max\{B\in\N : U_{J}({\mathbf z},B)\subset {\cal
F}_\kappa(\Omega_{\mathscr D})\}$.

Denote by
$d(\hat{\mathbf z},\partial \Omega_{\mathscr D})$ the {\em combinatorial
distance} of
$\hat{\mathbf z}\in V(\Omega_{\mathscr D})$ to the boundary $\partial
\Omega_{\mathscr D}$, that is the
smallest integer $K$ such that there is a connected path with $K$ edges
contained in $\Omega_{\mathscr D}$ from
$\hat{\mathbf z}$ to a boundary vertex of $\partial \Omega_{\mathscr D}$.
For further use, we define the constant
\begin{equation}
  C_{J}({\cal F}_\kappa(\Omega_{\mathscr D}))
=\min\left\{ \frac{B_{J}(\hat{\mathbf z},{\cal F}_\kappa(\Omega_{\mathscr D}))
+1}{d(\hat{\mathbf z},\partial \Omega_{\mathscr D})} : \hat{\mathbf z}\in
V_{int}(\Omega_{\mathscr D}) \right\} >0.
\end{equation}
\end{definition}
Note as an immediate consequence that for all $\hat{\mathbf z}\in
V_{int}(\Omega_{\mathscr D})$ 
\[ U_{J}(\hat{\mathbf z}, \lceil C_{J}({\cal
F}_\kappa(\Omega_{\mathscr
D})) d(\hat{\mathbf z},\partial \Omega_{\mathscr D})-1\rceil )\subset
{\cal F}_\kappa(\Omega_{\mathscr D}),\]
where $\lceil s \rceil$ denotes the smallest integer bigger than $s\in\R$.

The following theorem is an improved version of Theorem~\ref{theoConvC1} for
a specified class of quasicrystallic circle patterns.

\begin{theorem}\label{theoConvQCinfty}
Under the assumptions of Theorem~\ref{theoConvC1} and with the same notation,
let $d\in\N$ with $d\geq 2$ be a constant and assume further that
${\mathscr D}_n$ is a quasicrystallic rhombic embedding
in $D$ with edge lengths $\eps_n$ and dimension
$d_n\leq d$. 
The directions of the edges 
are elements of the set $\{\pm a_1^{(n)},\dots,\pm a_{d_n}^{(n)}\}\subset
\Sp^1$ such that any two of the vectors of
$\{a_1^{(n)},\dots,a_{d_n}^{(n)}\}$ are linearly independent. The
possible angles
are uniformly bounded, that is for all $n\in\N$ the scalar product
is strictly bounded away from $1$,
\begin{equation}\label{eqproda}
|\langle a_i^{(n)},a_j^{(n)}\rangle |\leq \cos(\pi/2 +C)<1,
\end{equation}
for all $1\leq i<j\leq d_n$ and some constant $0<C<\pi/2$. Consequently, the
intersection angles $\alpha_n$ are uniformly bounded in the sense that
for all $n\in\N$ and all faces $f\in F({\mathscr D}_n)$ there holds
\begin{equation}\label{boundalphaCinf}
|\alpha_n(f)-\pi/2| \leq C.
\end{equation}

Let $\kappa\in\N$, let $J_0\subset\{1,\dots,d\}$ contain at least two
indices, and let $B,C_{J_0}>0$ be real
constants. Suppose that $J_0\subset\{1,\dots,d_n\}$ for all
$n\in\N$ and
\[C_{J_0}({\cal F}_\kappa(\Omega_{{\mathscr D}_n}))\geq C_{J_0}>0.\]
 
Then we have with the same definitions of $r_n$, $\phi_n$, $q_n$, and
$p_n$ as in Theorem~\ref{theoConvC1} that $q_n\to g'$ and $g_n\to g$ in
$C^\infty(D)$ as $n\to\infty$, that is
discrete partial derivatives of all orders of $q_n$ and $g_n$ converge
uniformly on compact subsets to their smooth counterparts.
\end{theorem}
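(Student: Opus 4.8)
\emph{Roadmap.} The proof bootstraps twice on top of Theorem~\ref{theoConvC1}. First I upgrade the $C^0$/$C^1$ error from order $(-\log_2\eps_n)^{-1/2}$ to order $\eps_n$ by replacing the crude gradient bound of Lemma~\ref{lem3} with a sharp one coming from the asymptotics of the discrete Green's function on rhombic embeddings. Then I use the integrability of the Hirota equation~\eqref{eqw} to lift the comparison function~\eqref{eqdefw} to the brick $\Pi(\Omega_{{\mathscr D}_n})\subset\Z^d$ and apply an interior regularity estimate there, which turns the order-$\eps_n$ $C^0$ bound into order-$\eps_n$ bounds for discrete derivatives of every order.

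\emph{Step 1 (order $\eps_n$).} As already observed after Lemma~\ref{lem1} and in the proof of Lemma~\ref{lem3}, the function $t_n-h_n$ vanishes on $V_\partial(G_n)$ (since $r_n=R_n|g'|$ there), satisfies $\Delta(t_n-h_n)=\Od(\eps_n^3)$ on $V_{int}(G_n)$ for the discrete Laplacian of Definition~\ref{defLap}, and obeys $\|t_n-h_n\|=\Od(\eps_n)$ by Lemma~\ref{lem1}. Representing $t_n-h_n$ through the discrete Green's function of $(G_n,\alpha_n)$ and inserting the asymptotic development of this Green's function and of its first difference --- a generalization of Kenyon's expansion in~\cite{Ke02} to quasicrystallic rhombic embeddings, carried out in Appendix~\ref{secPropGreen} with constants uniform under~\eqref{eqproda} --- one bounds the first difference of $t_n-h_n$ by summing $|\Delta(t_n-h_n)|=\Od(\eps_n^3)$ against $|\nabla_xG|=\Od(\eps_n/|x-\cdot|)$ over the $\Od(\eps_n^{-2})$ interior vertices, which, using $\sum_y\eps_n/|x-y|=\Od(\eps_n^{-1})$, gives
\[(t_n-h_n)(z_j)-(t_n-h_n)(z)=\Od(\eps_n^2)\]
for incident vertices on any compact $K\subset D$. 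Inserting this into the proof of Lemma~\ref{lem2}, where now the per-edge error is $\Od(\eps_n^2)$ and the dual path has $\delta_n(e)=\Od(\eps_n^{-1})$ edges on compact subsets, yields $\varphi_n(\vec e)-\phi_n(\vec e)-\arg g'(v)=\Od(\eps_n)$, hence $q_n-g'=\Od(\eps_n)$; integrating $q_n$ exactly as in the proof of Theorem~\ref{theoConvC1} gives $g_n-g=\Od(\eps_n)$.

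\emph{Step 2 (all derivatives).} Let $w:V({\mathscr D}_n)\to\C$ be the comparison function~\eqref{eqdefw} of the isoradial pattern and ${\mathscr C}_n$. By Theorem~\ref{theoCompare}, $q_n$ equals the product of $w$ over the relevant directed edge $[v,w]$, and $g_n=p_n$ is a discrete primitive of $q_n$ (proof of Theorem~\ref{theoConvC1}), so it suffices to control all discrete derivatives of $w$. Since~\eqref{eqw} is $3$D-consistent, $w$ extends uniquely to the brick $\Pi(\Omega_{{\mathscr D}_n})$, in particular to ${\cal F}_\kappa(\Omega_{{\mathscr D}_n})$ (Definitions~\ref{defFlip} and~\ref{defC12}); for $j\in J_0$ the shifted function $w(\,\cdot+\mathbf e_j)$ is again the comparison function of the isoradial pattern and a circle pattern on a surface reached by flips --- Lemma~\ref{lem2patt} in the $\Z^d$-picture --- so the order-$\eps_n$ estimates of Step~1 hold for it on ${\cal F}_\kappa$, and iterating, for every iterated shift. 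Linearizing~\eqref{eqw} around $w\equiv1$ as in Remark~\ref{remInterLap}, the difference of $\log w$ and its smooth counterpart (the pull-back of $\log g'$ under the projection $\Z^d\to\C$) solves a discrete Cauchy--Riemann equation with an inhomogeneity of order $\eps_n^2$ per face and sup norm $\Od(\eps_n)$ on ${\cal F}_\kappa$. Applying the Regularity Lemma~\ref{lemRegIso1} --- the generalization of Duffin's interior estimates~\cite{Du53} to these lattices --- on the ball $U_{J_0}(\hat{\mathbf z},B)$ with $B=\lceil C_{J_0}\,d(\hat{\mathbf z},\partial\Omega_{{\mathscr D}_n})-1\rceil\gtrsim\eps_n^{-1}$ over a fixed compact subset shows that a discrete partial derivative of order $m$ in the $J_0$-directions is $\Od(\|\cdot\|_\infty/B^m)=\Od(\eps_n^{m+1})$. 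Dividing by $\eps_n^m$ (the displacement of $m$ lattice steps in the embedding) and reconstructing all coordinate derivatives from two $J_0$-directions, whose $\C$-projections are $\R$-linearly independent by~\eqref{eqproda}, the $m$-th smooth derivative of $\log w-\log g'$ is $\Od(\eps_n)$; translating back, all discrete partial derivatives of $q_n$, and hence of $g_n$, converge uniformly on compact subsets to those of $g'$, resp.\ $g$, with error $\Od(\eps_n)$. In the ``very regular'' case of Remark~\ref{remeps2} the inhomogeneity is of order $\eps_n^3$ and all errors improve to $\Od(\eps_n^2)$.

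\emph{Main obstacle.} The crux is Step~2's interior regularity estimate on the multidimensional surfaces ${\cal F}_\kappa(\Omega_{{\mathscr D}_n})$: Kenyon's Green's-function asymptotics and Duffin-type discrete potential theory must be transferred from the square grid to arbitrary quasicrystallic rhombic embeddings \emph{and their lifts to $\Z^d$}, with every constant depending only on $d$, $\kappa$, on $C$ in~\eqref{eqproda}--\eqref{boundalphaCinf}, and on the lower bound $C_{J_0}$ --- never on $n$ or on the possibly irregular, varying combinatorics of ${\mathscr D}_n$. Keeping the smooth comparison function on the brick consistent across flips and quantifying how rescaled discrete derivatives approximate smooth ones through the projection $\Z^d\to\C$ is the remaining, largely routine, bookkeeping.
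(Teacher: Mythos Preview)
Your high-level strategy --- extend the comparison function to $\Z^d$ via $3$D-consistency, then use discrete potential theory on quasicrystallic embeddings --- matches the paper, and your Step~1 is essentially correct (it is the $k=1$ case of the paper's induction, so in fact redundant). But Step~2 contains a genuine gap.

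The Regularity Lemma~\ref{lemRegIso1} bounds only the \emph{first} difference of a function $u$ in terms of $\|u\|$ and $M(u)=\max|\Delta u|$; it does not give $m$-th order derivatives in one shot. Your claim that a single application on a ball of combinatorial radius $B\sim\eps_n^{-1}$ yields $\partial^m u=\Od(\|u\|_\infty/B^m)$ would be plausible for a \emph{discrete harmonic} $u$ (iterating on nested balls), but $u=t_n-h_n$ is not harmonic: $\Delta^{\eps_n}u=\Od(\eps_n)$. If you try to iterate na\"ively, at the next step you need $M(\partial u)=\max|\Delta^{\eps_n}\partial u|=\max|\partial\Delta^{\eps_n}u|$, and a pointwise bound $\Delta^{\eps_n}u=\Od(\eps_n)$ only gives $\partial(\Delta^{\eps_n}u)=\Od(1)$, whence the Regularity Lemma yields $\partial^2 u=\Od(1)$ rather than $\Od(\eps_n)$. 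The iteration stalls.

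What closes the argument in the paper is the structural identity~\eqref{eqdefFv},
\[
\Delta^{\eps_n}_{{\mathbf v}_{\mu_1},\dots,{\mathbf v}_{\mu_L}} t_n
  =\eps_n\,F_{{\mathbf v}_{\mu_1},\dots,{\mathbf v}_{\mu_L}}\!\bigl(\eps_n,\partial_{{\mathbf v}_{\mu_1}}t_n,\dots,\partial_{{\mathbf v}_{\mu_L}}t_n\bigr),
\]
with $F$ a $C^\infty$-function of its arguments. This is the nonlinear content of the circle-pattern closing condition, and it is precisely what your linearization to ``an inhomogeneity of order $\eps_n^2$ per face'' discards. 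With~\eqref{eqdefFv} one argues inductively (Lemmas~\ref{lemInd} and~\ref{lemEstpart}): assuming $\partial^{\leq k}(t_n-h_n)=\Od(\eps_n)$ on a suitable $U_k$, one gets $\partial^{\leq k+1}t_n=\Od(1)$ trivially, hence $\partial^k F=\Od(1)$ by the chain rule, hence $\Delta^{\eps_n}\partial^k(t_n-h_n)=\eps_n\,\partial^k F+\Od(\eps_n)=\Od(\eps_n)$; now the Regularity Lemma applied to $\partial^k(t_n-h_n)$ on a slightly smaller set gives $\partial^{k+1}(t_n-h_n)=\Od(\eps_n)$. That inductive bootstrap, not a one-shot estimate, is the engine of the proof; without it (or an equivalent device that controls derivatives of the inhomogeneity) your Step~2 does not go through.
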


Simple examples of sequences of quasicrystallic circle
patterns for this theorem are subgraphs of the suitably
scaled infinite regular square grid or
hexagonal circle patterns or subgraphs of 
suitably scaled infinite rhombic embeddings constructed in
Examples~\ref{exquasi} and~\ref{exquasihat} (see Figure~\ref{figExCirc}).
Simply connected parts of these
rhombic embeddings which are large enough satisfy the
conditions $C_{J}({\cal F}_\kappa(\Omega_{{\mathscr D}_n}))\geq C_0>0$ for all
subsets $J\subset\{1,\dots, d\}$, where the constant $C_0$
only depends on the construction parameters.
This is a consequence of the simple combinatorics or of the modified
construction in Example~\ref{exquasihat}.

\begin{remark}
Similarly as for Theorem~\ref{theoConvC1}, the proof
of Theorem~\ref{theoConvQCinfty}
shows that we have in fact a priori bounds in $O(\eps_n)$ on compact subset of
$D$ for the difference of directional derivatives of $\log g'$ (and
thus of $g'$ and $g$) and corresponding 
discrete partial derivatives of $\log\frac{r_n}{\eps_n}+i(\varphi_n-\phi_n)$
(and thus $q_n$ and $g_n$ respectively). The constant in the estimation depends
on the order of the partial derivatives and on the compact subset.

Furthermore, for the 'very regular' case 
considered in Remark~\ref{remeps2}
the a priori bounds on the above partial
derivatives have order $O(\eps_n^2)$ on compact subset of $D$
due to the improved estimation~\eqref{eqlem1eps2}.
\end{remark}

\begin{remark}\label{remCinfty}
There is an analogous version of Theorem~\ref{theoConvQCinfty}
of $C^\infty$-convergence for quasicrystallic circle patterns
with Neumann boundary conditions.
\end{remark}

For the proof of Theorem~\ref{theoConvQCinfty}, we first define the comparison
function $w_n$ for the circle pattern ${\mathscr C}_n$
according to~\eqref{eqdefw} and extend it to ${\cal F}_\kappa(\Omega_{{\mathscr
D}_n})$.
This extension is again
denoted by $w_n$. The restriction of $w_n$ to white vertices is called
{\em extended radius function} and denoted by $r_n$ as the original radius
function for ${\mathscr C}_n$.
We also extend $|g'|$ to ${\cal  F}_\kappa(\Omega_{{\mathscr D}_n})$
by defining $|g'({\mathbf z})|$ to be
the value $|g'(z)|$ at the projection $z$ of ${\mathbf z}\in\Z^d$ onto the plane
of the rhombic embedding ${\mathscr D}_n$. If $n$ is big enough, which will
be assumed in the following, then $z\in D$ or $z\in W\setminus D$ and the
distance of $z$ to $D$ is bounded independently of $n$.

Denote $t_n=\log(r_n/R_n) =\log(r_n/\eps_n)$ and $h_n=\log|g'|$ as in
Section~\ref{secDirichlet}. Using the
extensions of $r_n$ and $|g'|$, these functions are defined on all white
vertices of ${\cal F}_\kappa(\Omega_{{\mathscr D}_n})$.
Furthermore we have the following extension of Lemma~\ref{lem1}.

\begin{lemma}\label{corlem1}
The estimation
\[ h_n({\mathbf z})-t_n({\mathbf z})=\Od(\eps_n).\]
holds for all white vertices ${\mathbf z}\in V({\cal
F}_\kappa(\Omega_{{\mathscr D}_n}))$. The constant in the $\Od$-notation may
depend on $\kappa$ and on the dimension $d_n$ of ${\mathscr D}_n$.
\end{lemma}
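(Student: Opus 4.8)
The plan is to reduce the statement to Lemma~\ref{lem1}, which already gives $h_n-t_n=\Od(\eps_n)$ on the white vertices of $\Omega_{{\mathscr D}_n}$ itself, by realizing the remaining white vertices of ${\cal F}_\kappa(\Omega_{{\mathscr D}_n})$ on rhombic embeddings to which that lemma applies, and then propagating the estimate through the ``outer layers'' added by the operations $W\mapsto W^{[1]}$. So I would split ${\cal F}_\kappa(\Omega_{{\mathscr D}_n})$ into the flip-reachable part ${\cal F}(\Omega_{{\mathscr D}_n})$ and the $[1]$-closures on top of it, and treat these two parts separately.

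For a white vertex ${\mathbf z}\in V({\cal F}(\Omega_{{\mathscr D}_n}))$, I would choose a finite sequence of flips turning $\Omega_{{\mathscr D}_n}$ into a combinatorial surface $\Omega'$ that contains ${\mathbf z}$, and let ${\mathscr D}'$ be the associated rhombic embedding. A flip changes neither the finite set of edge directions (so the bound~\eqref{boundalphaCinf} still holds for the intersection angles of ${\mathscr D}'$) nor the boundary faces, and $\Omega'\subset\Pi(\Omega_{{\mathscr D}_n})$, so for $n$ large ${\mathscr D}'$ still projects into $W$ and into a uniformly bounded region (compare the extension of $|g'|$ to ${\cal F}_\kappa$ discussed above). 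Applying Lemma~\ref{lem2patt} to ${\mathscr D}_n$, ${\mathscr D}'$ and ${\mathscr C}_n$ produces a planar circle pattern ${\mathscr C}_n'$ for ${\mathscr D}'$ agreeing with ${\mathscr C}_n$ on all boundary circles, and by construction its radius function coincides, up to the global scale, with the restriction of the extended radius function $r_n$ to the white vertices of $\Omega'$. Hence, by Proposition~\ref{propRadius}, $r_n$ solves the nonlinear Laplace equation~\eqref{eqFgen} at every interior white vertex of ${\mathscr D}'$, while it equals $\eps_n|g'|$ on $V_\partial(G')=V_\partial(G_n)$; so it is precisely the Dirichlet solution on ${\mathscr D}'$ of Theorem~\ref{theoDirichlet} for the boundary data $\eps_n|g'|$. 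I would then run the proof of Lemma~\ref{lem1} on the pair formed by the isoradial circle pattern for ${\mathscr D}'$ and by ${\mathscr C}_n'$: that proof uses only the rhombic shape of the faces, the bound on the intersection angles, the smoothness of $\log|g'|$ on the region covered, and the boundary condition, all available here with constants independent of the flip sequence, and it yields $h_n({\mathbf z})-t_n({\mathbf z})=\Od(\eps_n)$ with a uniform constant.

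For a white vertex ${\mathbf z}\in V({\cal F}_\kappa(\Omega_{{\mathscr D}_n}))\setminus V({\cal F}(\Omega_{{\mathscr D}_n}))$ I would argue by induction on the number of $[1]$-operations needed to reach ${\mathbf z}$. Such a ${\mathbf z}$ is the fourth corner of a two-dimensional facet $f$ of $\Z^d$ whose other three corners --- a white vertex ${\mathbf z}'$ and two black vertices ${\mathbf x}_\pm$ --- were produced at the previous step, and whose projection is a rhombus of edge length $\eps_n$ with angle $\alpha(f)$ obeying~\eqref{boundalphaCinf}. Since the extension of $w_n$ satisfies the Hirota equation~\eqref{eqw} on $f$, equivalently equation~\eqref{eqdeltaw} holds on $f$ with the isoradial radii $\equiv\eps_n$, which degenerates to $2f_{\alpha(f)}(t_n({\mathbf z})-t_n({\mathbf z}'))=(\delta_n({\mathbf x}_+)-\delta_n({\mathbf x}_-))+\pi-\alpha(f)$, where $\delta_n$ is, as in~\eqref{eqdefw}, a lift to $\R$ of the argument of $w_n$ on the black vertices of ${\cal F}_\kappa(\Omega_{{\mathscr D}_n})$; hence $t_n({\mathbf z})-t_n({\mathbf z}')=f_{\alpha(f)}^{-1}\!\bigl(\tfrac12(\delta_n({\mathbf x}_+)-\delta_n({\mathbf x}_-))+\tfrac{\pi-\alpha(f)}{2}\bigr)$. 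Carrying along in the same induction the companion estimate that neighbouring values of $\delta_n$ differ from the corresponding increment of $\arg g'$ by $o(\eps_n)$ --- the analogue of Lemma~\ref{lem2}, whose error term is $\Od(\eps_n(-\log_2\eps_n)^{-1/2})$, with base case the flip-reachable part treated above --- and using $f_{\alpha(f)}^{-1}((\pi-\alpha(f))/2)=0$ together with the bound on $(f_{\alpha(f)}^{-1})'$ implied by~\eqref{boundalphaCinf}, I obtain $t_n({\mathbf z})-t_n({\mathbf z}')=\Od(\eps_n)$; since the projections of ${\mathbf z}$ and ${\mathbf z}'$ are $\Od(\eps_n)$ apart, the smoothness of $\log|g'|$ gives $h_n({\mathbf z})-h_n({\mathbf z}')=\Od(\eps_n)$, and combining with the inductive bound for ${\mathbf z}'$ finishes the step. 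After at most $\kappa$ steps the constant has multiplied by a factor depending only on $\kappa$, $d_n$ and $C$.

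The two obstacles I anticipate are: first, keeping every constant uniform across the --- a priori unboundedly many --- flipped embeddings ${\mathscr D}'$ used in the second paragraph, which rests on flips preserving the boundary and the finite set of edge directions and on ${\cal F}_\kappa(\Omega_{{\mathscr D}_n})\subset\Pi(\Omega_{{\mathscr D}_n})$ projecting into $W$ (this is why the $\Od$-constant is allowed to depend on $\kappa$ and $d_n$); second, the estimate on the outer layers is not self-contained, since the radius bound there requires the simultaneous angular bound on $\delta_n$, so the two estimates must be propagated together, and verifying that Lemma~\ref{lem2} and, underneath it, Lemma~\ref{lem3} and Proposition~\ref{propSC} still apply with uniform constants to the flipped embeddings is the technical heart of the argument.
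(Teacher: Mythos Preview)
Your two-part decomposition---flip-reachable vertices via Lemma~\ref{lem2patt} and Lemma~\ref{lem1}, then outer layers via the Hirota equation~\eqref{eqw}---is exactly the paper's approach. The paper's treatment of the second part is a single sentence (``equation~\eqref{eqw} may be used to extend the estimation; each step adds an error of order $\eps_n$''), so you are essentially filling in what it elides, and you correctly identify the key point the paper suppresses: propagating the radius estimate through~\eqref{eqw} requires a companion estimate on the black-vertex values of $w_n$.

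One simplification: you do not need the full strength of Lemma~\ref{lem2} (and hence Lemma~\ref{lem3} and Proposition~\ref{propSC}) for the angular base case. On any flipped surface $\Omega'$, equation~\eqref{eqdeltaw} with isoradial reference reads $\delta_n(v_+)-\delta_n(v_-)=2f_{\alpha(f)}(t_n(z_+)-t_n(z_-))-(\pi-\alpha(f))$, and Lemma~\ref{lem1} alone gives $t_n(z_+)-t_n(z_-)=h_n(z_+)-h_n(z_-)+\Od(\eps_n)=\Od(\eps_n)$, hence $\delta_n(v_+)-\delta_n(v_-)=\Od(\eps_n)$. This $\Od(\eps_n)$ bound (rather than your $o(\eps_n)$) is all that is needed to get $t_n({\mathbf z})-t_n({\mathbf z}')=\Od(\eps_n)$ from $f_{\alpha(f)}^{-1}$, and it propagates through the induction without loss. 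So the second ``obstacle'' you flag largely dissolves: uniformity across flipped surfaces for Lemma~\ref{lem1} suffices, and that rests only on the shared boundary, the common edge directions, and~\eqref{boundalphaCinf}.
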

\begin{proof}
 Let ${\mathbf z}\in V({\cal F}(\Omega_{{\mathscr D}_n}))$ be a white vertex.
By definition of ${\cal F}(\Omega_{{\mathscr D}_n})$ there is a combinatorial
surface $\Omega'({\mathbf z})$ containing ${\mathbf z}$ with the same boundary
curve as $\Omega_{{\mathscr D}_n}$. Lemma~\ref{lem2patt} implies that we can
define an embedded circle pattern ${\mathscr C}'$ using the values of $w_n$ on
$\Omega_{{\mathscr D}_n}$. Now the claim follows from Lemma~\ref{lem1}.

For ${\mathbf z}\in V({\cal F}_\kappa(\Omega_{{\mathscr D}_n}))\setminus
V({\cal F}(\Omega_{{\mathscr D}_n}))$ observe that equation~\eqref{eqw} may be
used to
extend the estimation of $h_n-t_n$. Each steps adds an error of order $\eps_n$,
therefore the final constant depends on $\kappa$ and on $d_n$.
\end{proof}

Our main aim is to estimate the
partial derivatives of the extended radius function in $\Z^{d_n}$.
Such partial derivatives can generally be considered in direction of the
vectors ${\mathbf v}= \pm {\mathbf e}_{j_1}\pm {\mathbf e}_{j_2}$ for $0\leq
j_1,j_2\leq d_n$
such that ${\mathbf e}_{j_1}$ and ${\mathbf e}_{j_2}$ are not collinear. Let
${\mathbf v}_{1},\dots, {\mathbf
v}_{2d_n(d_n-1)}$ be an enumeration of these vectors and set ${\mathbf V}_n=\{
{\mathbf v}_{1},\dots, {\mathbf v}_{2d_n(d_n-1)} \}$.
The corresponding enumeration of the directions $v= \pm
{a}_{j_1}^{(n)}\pm {a}_{j_2}^{(n)}$ in ${\mathscr D}_n$  for $0\leq j_1,j_2\leq
d$ is denoted by $v_1,\dots v_{2d_n(d_n-1)}$.

For any function $h$ on white vertices ${\mathbf z}\in\Z^d$ and/or in $z\in\C$
define {\em discrete partial derivatives} in direction ${\mathbf v}_i$ or
$v_i$ by
\begin{align*} 
  \partial_{{\mathbf v}_i}h({\mathbf z})
  &=\frac{h({\mathbf z}+{\mathbf v}_i)-h({\mathbf z})}{\eps_n|v_i|} 
  \qquad\text{and}\qquad \partial_{{v}_i}h({z})
  =\frac{h({z}+\eps_n{v}_i)-h({z})}{\eps_n|v_i|} 
\end{align*}
respectively.
Furthermore, we call a direction ${\mathbf v}_i$ or $v_i$ to be {\em contained
in} $\Omega_{{\mathscr D}_n}$ or ${\mathscr D}_n$ at a vertex $\hat{\mathbf
z}$ or $z$ respectively if there is a two-dimensional facet of
$\Omega_{{\mathscr D}_n}$ incident to $\hat{\mathbf z}$ whose diagonal incident
to $\hat{\mathbf z}$ is parallel to ${\mathbf v}_i$, that is
$\{\hat{\mathbf z}+\lambda {\mathbf v}_i:
\lambda\in[0,1]\}\subset \Omega_{{\mathscr D}_n}$.

Corresponding to these partial derivatives we use a scaled version
of the Laplacian in~\eqref{eqLap}. For a function $\eta$ and an
interior vertex $z$ with incident vertices $z_1,\dots, z_L$ define the {\em
discrete Laplacian} by
\begin{equation}\label{deflap2}
  \Delta^{\eps_n} \eta (z):=\Delta^{\eps_n}_{{\mathbf v}_{\mu_1}, \dots,{\mathbf
v}_{\mu_L}} \eta (z) :=\frac{1}{\eps_n^2}\sum_{j=1}^L 2
  f'_{\alpha_n([z,z_j])}(0)(\eta(z_j)-\eta(z)).
\end{equation}
Here $v_{\mu_j}=v([z,z_j])=(z_j-z)/\eps_n$ ($j=1,\dots,L$) and the notation
$\Delta^{\eps_n}_{{\mathbf v}_{\mu_1}, \dots,{\mathbf v}_{\mu_L}}$ emphasizes
the dependence of the Laplacian on
the directions $v([z,z_j])$ of the edges $[z,z_j]\in E(G_n)$.

Let $z_0\in V_{int}(G_n)$ be an interior vertex and let ${\mathbf v}_{\mu_1},
\dots,{\mathbf v}_{\mu_L}\in {\mathbf V}_n$ be the directions which correspond
to the directions of the edges of $G_n$ incident to $z_0$.
Let ${\mathbf z}_1\in {\cal F}_\kappa(\Omega_{{\mathscr D}_n})$ be a vertex such
that ${\mathbf z}_1 +{\mathbf v}_{\mu_i}\in {\cal F}_\kappa(\Omega_{{\mathscr
D}_n})$ for all $i=1,\dots, L$. 
Our next aim is to study $\Delta^{\eps_n}_{{\mathbf v}_{\mu_1}, \dots,{\mathbf
v}_{\mu_L}} t_n ({\mathbf z}_1)$. For this purpose we assume that we have
translated to ${\mathbf z}_1$ the facets of $\Omega_{{\mathscr D}_n}$ incident
to ${\mathbf z}_0$, that is we consider the (very small) monotone surface
consisting of the two-dimensional facets incident to ${\mathbf z}_1$ which
contain a diagonal $\{{\mathbf z}_1+\lambda {\mathbf v}_{\mu_i}:
\lambda\in[0,1]\}$ for $i=1,\dots, L$. Using the extension of the comparison
function $w_n$, the closed chain of these two-dimensional facets incident to
${\mathbf z}_1$ is mapped to a closed chain of kites. Thus we have
\[\sum_{l=1}^L 2f_{\alpha_{\mu_l}}(t_n({\mathbf z}_1+
  {\mathbf v}_{\mu_l})-t_n({\mathbf z}_1)) \in 2\pi\N ,\]
where $\alpha_{\mu_l}$ denotes the labelling of the two-dimensional facet
containing ${\mathbf z}_1$ and ${\mathbf z}_1 +{\mathbf v}_{\mu_l}$.
By assumption $\sum_{l=1}^L 2f_{\alpha_{\mu_l}}(0)=2\pi$ and
we know that $t_n({\mathbf z}_1+
  {\mathbf v}_{\mu_l})-t_n({\mathbf z}_1) =\Od(\eps_n)$ by Lemma~\ref{corlem1}.
Since the intersection angles and thus the maximum number of
neighbors are uniformly bounded, we deduce that
\[\Biggl(\sum_{l=1}^L 2f_{\alpha_{\mu_l}}(t_n({\mathbf z}_1+
  {\mathbf v}_{\mu_l})-t_n({\mathbf z}_1)) \Biggr) -2\pi =0\]
if $\eps_n$ is small enough.
Using a Taylor expansion about $0$, we obtain
\begin{align}
  \Delta^{\eps_n}_{{\mathbf v}_{\mu_1}, \dots,{\mathbf
v}_{\mu_L}} t_n({\mathbf z}_1) &=
   \frac{2}{\eps_n^2}\left(-\sum_{l=1}^L \sum_{m=3}^\infty 
    \frac{f^{(m)}_{\alpha_{\mu_l}}(0)}{m!} (t_n({\mathbf z}_1+
  {\mathbf v}_{\mu_l})-t_n(z))^m\right) \nonumber\\ 
  &= \eps_n \left(-2\sum_{l=1}^L \sum_{m=3}^\infty
    \frac{f^{(m)}_{\alpha_{\mu_l}}(0)}{m!} 
  |v_{\mu_l}|^m(\partial_{{\mathbf v}_{\mu_l}} t_n({\mathbf
z}_1))^m\eps_n^{m-3}\right)\nonumber\\
  &=: \eps_n F_{{\mathbf v}_{\mu_1}, \dots,{\mathbf
v}_{\mu_L}} (\eps_n, \partial_{{\mathbf v}_{\mu_1}}t_n, \dots,
  \partial_{{\mathbf v}_{\mu_L}}t_n; {\mathbf z}_1), \label{eqdefFv} 
\end{align}
Note that $F_{{\mathbf v}_{\mu_1}, \dots,{\mathbf v}_{\mu_L}}$ is a
$C^\infty$-function in the variables
$\eps_n,\partial_{{\mathbf v}_{\mu_1}}t_n, \dots, \partial_{{\mathbf
v}_{\mu_L}}t_n$. This fact will be important for the proof of
Lemma~\ref{lemInd} below.

For further use we introduce the following notation.
Let $K$ be a compact subset of $D$ and let $M>0$.
Denote by $\Omega_{{\mathscr D}_n}^{K,M}$ the part of
$\Omega_{{\mathscr D}_n}$ with vertices of combinatorial distance
bigger than $M$ to the boundary and 
whose corresponding vertices $z\in V({\mathscr D}_n)$ lie in $K$.
Let $J\subset\{1,\dots,d\}$
contain at least two different indices. In order to consider
partial derivatives within $K$ in the directions $\pm {\mathbf e}_{j_1} \pm
{\mathbf e}_{j_2}$, where $j_1,j_2\in J$ and $j_1\not=j_2$, we attach a
ball $U_{J}(\hat{\mathbf z},M)$ at each of these points:
\begin{equation}
U_J(K,M,\Omega_{{\mathscr D}_n})= \bigcup_{\hat{\mathbf z}\in
V(\Omega_{{\mathscr D}_n}^{K,M})} U_J(\hat{\mathbf
z},M).
\end{equation}
Note that if $M\leq B_J(\hat{\mathbf z})$, for example if $M\leq
C_J({\cal F}_\kappa(\Omega_{{\mathscr D}_n})) d(\hat{\mathbf
z},\partial \Omega_{{\mathscr D}_n})-1$ for all $\hat{\mathbf z}\in
V(\Omega_{{\mathscr D}_n}^{K,M})$, then
$U_J(K,M,\Omega_{{\mathscr D}_n}) \subset {\cal
F}_\kappa(\Omega_{{\mathscr D}_n})$.

Furthermore we define $K+d_0$ to be the compact $d_0$-neighborhood of
the compact set $K\subset \C$, that is 
\begin{equation*}
 K+d_0 =\{z\in\C : \mathbb{e}(K,z)\leq d_0\},
\end{equation*}
where $\mathbb{e}$ denotes the Euclidean distance between a point and a compact
set or between two compact subsets of $\C$.

The following lemma is important for our argumentation, as it gives an
estimation of a partial derivative using estimations of the function and its
Laplacian.

\begin{lemma}[Regularity Lemma]\label{lemRegIso1}
Let ${\mathscr D}$ be a quasicrystallic rhombic embedding with associated
graph $G$ and labelling $\alpha$.
 Let $W\subset V(G)$ and let $u:W\to \R$ be any function. Let
$M(u)=\max_{v\in W_{int}}|\Delta u(v)/(4F^*(v))|$, where
\begin{equation*}
F^*(v)=\frac{1}{4} \sum_{[z,v]\in E(G)} c([z,v])|z-v|^2
= \frac{1}{2}\sum_{[z,v]\in E(G)} \sin\alpha([z,v])
\end{equation*}
is the area of the face of the dual graph $G^*$ corresponding to the vertex
$v\in V_{int}(G)$.
There are constants $C_5,C_6>0$, independent of $W$ and $u$, such that
\begin{equation}
 |u(x_0)-u(x_1)|\rho \leq C_5\|u\|_W +\rho^2 C_6 M(u)
\end{equation}
for all vertices $x_1\in W$ incident to $x_0$, where $\rho$ is the Euclidean
distance of $x_0$ to the boundary $W_\partial$.
\end{lemma}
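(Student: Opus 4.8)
The plan is to split $u$, on a combinatorial ball centred at $x_0$, into a discrete\hyp harmonic part controlled by a Harnack\slash gradient estimate and an inhomogeneous part $u_2$ with $u_2=0$ on the boundary of the ball and $\Delta u_2=\Delta u$ inside; the harmonic part produces the term $C_5\|u\|_W$ and the inhomogeneous part the term $C_6\rho^2M(u)$. First I would reduce to a ball: since the angle bound~\eqref{boundalphaCinf} forces every edge of $G$ (in its rhombic realisation) to have length in $[\,2\sin(\pi/4-C/2),2\sin(\pi/4+C/2)\,]$, combinatorial and Euclidean distance are comparable, so with $N:=\lfloor \rho/(2\sin(\pi/4+C/2))\rfloor$ the combinatorial ball $B:=B_{x_0}(N)$ lies in $W$ and $N\asymp\rho$ (one may assume $\rho$ large, since otherwise $|u(x_0)-u(x_1)|\rho\le 2\|u\|_W\rho$ is already $\le C_5\|u\|_W$). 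On $B$ write $u=u_1+u_2$ with $\Delta u_1=0$ on $B_{\mathrm{int}}$, $u_1=u$ on $\partial B$ (existence and uniqueness is routine linear algebra: by $f'_\theta(0)>0$, Lemma~\ref{lemPropf}, the operator $\Delta$ of~\eqref{eqLap} restricted to interior vertices is an invertible M\hyp matrix), and $u_2=u-u_1$, so $\Delta u_2=\Delta u$ on $B_{\mathrm{int}}$ and $u_2=0$ on $\partial B$.

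For the harmonic part, the Maximum Principle~\ref{MaxPrinzip} gives $\|u_1\|_B\le\|u\|_{\partial B}\le\|u\|_W$, and a discrete gradient (Harnack\slash Poisson\hyp kernel) estimate for $\Delta$\hyp harmonic functions on combinatorial balls with these uniformly bounded weights — one of the discrete potential\hyp theory statements proved in Appendix~\ref{secPropGreen} in the spirit of Duffin~\cite{Du53} — yields $|u_1(x_0)-u_1(x_1)|\le \tfrac{C}{N}\|u_1\|_B\le \tfrac{C'}{\rho}\|u\|_W$, hence $|u_1(x_0)-u_1(x_1)|\,\rho\le C_5\|u\|_W$.

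For the sup\hyp norm of $u_2$ I would use an identity special to isoradial (rhombic) patterns. Viewing the realisation $\mathrm{id}:V(G)\to\C$ as a function, at every interior $v$ one has $\Delta(\mathrm{id})(v)=\sum_{[z,v]}2f'_{\alpha([z,v])}(0)(z-v)=0$, because $i\cdot 2f'_{\alpha([z,z_j])}(0)(z_j-v)$ is, up to orientation, the directed diagonal joining the two black vertices of the face $[z,z_j]$, and these diagonals telescope to zero around $v$ (this is the computation already carried out in the proof of Lemma~\ref{lem1}). Consequently $\Delta(|\cdot-x_0|^2)(v)=\sum_{[z,v]}2f'_{\alpha([z,v])}(0)|z-v|^2=4F^*(v)$. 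Taking $R:=2N\ge\max_{y\in B}|y-x_0|$ and the barrier $\Psi(x):=M(u)\,(R^2-|x-x_0|^2)$, one gets $\Delta(\Psi\pm u_2)(v)=-4F^*(v)M(u)\pm\Delta u(v)\le 0$ on $B_{\mathrm{int}}$ by the very definition of $M(u)$, while $\Psi\pm u_2=\Psi\ge 0$ on $\partial B$; the Maximum Principle~\ref{MaxPrinzip} then gives $|u_2|\le\Psi\le R^2M(u)\le C N^2M(u)\le C'\rho^2M(u)$ on all of $B$.

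The hard part — and the only place the quasicrystallic structure is genuinely needed — is to upgrade this to a gradient bound $|u_2(x_0)-u_2(x_1)|\le C\rho M(u)$; a sup\hyp norm bound alone only gives $|u_2(x_0)-u_2(x_1)|\rho\le C\rho^3M(u)$, which is off by a factor $\rho$. I would represent $u_2(x)=-\sum_{y\in B_{\mathrm{int}}}G_B(x,y)\,\Delta u(y)$ with $G_B$ the discrete Green's function of $B$, and use $|\Delta u(y)|\le 4F^*(y)M(u)\le C M(u)$ (bounded degree and~\eqref{boundalphaCinf}) to obtain $|u_2(x_0)-u_2(x_1)|\le C M(u)\sum_{y\in B_{\mathrm{int}}}|G_B(x_0,y)-G_B(x_1,y)|$. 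The required input is the discrete analogue of $\int_{B_R}|\nabla_xG(x_0,y)|\,dy\le C R$, namely $\sum_{y\in B_{\mathrm{int}}}|G_B(x_0,y)-G_B(x_1,y)|\le C N$ for $x_1$ incident to the centre $x_0$; this is precisely the Green's\hyp function regularity established for quasicrystallic graphs in Appendix~\ref{secPropGreen}, built on Kenyon's asymptotic expansion~\cite{Ke02} and Duffin's methods~\cite{Du53} — away from $x_0$ the neighbour difference of $G_B$ is $\Od(1/|x_0-y|)$, and $\sum_{1\le|x_0-y|\le R}1/|x_0-y|\le CR$ by comparison with the area integral, the $\Od(1)$ vertices near $x_0$ contributing $\Od(1)$. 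This gives $|u_2(x_0)-u_2(x_1)|\le C N M(u)\le C_6'\rho M(u)$, hence $|u_2(x_0)-u_2(x_1)|\rho\le C_6\rho^2M(u)$, and adding the two contributions proves the lemma. The main obstacle is thus the Green's\hyp function gradient sum above: no purely maximum\hyp principle argument suffices for it, which is why the lemma is stated for quasicrystallic embeddings and why it relies on the discrete potential theory of Appendix~\ref{secPropGreen}.
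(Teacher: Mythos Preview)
Your proof is correct and follows essentially the same route as the paper: both work on a disk about $x_0$, split $u$ into a discrete-harmonic part controlled by the H\"older inequality of Theorem~\ref{theoHoelder} and a zero-boundary remainder whose edge difference at $x_0$ is bounded via the Green's-function representation together with the estimate $|{\cal G}_{x_0,\rho}(x_0,v)-{\cal G}_{x_0,\rho}(x_1,v)|\le C/(|v-x_0|+1)$ of Proposition~\ref{propGrhoRand}. The only cosmetic differences are that the paper folds your parabolic barrier $M(u)|z-x_0|^2$ into the harmonic extension (writing $u=h-f+s$ with $f(z)=M(u)|z-x_0|^2$, $h$ harmonic with boundary data $u+f$, and $s=u+f-h$) and works directly on the Euclidean disk $V(x_0,\rho)$ rather than a combinatorial ball; your separate barrier bound $\|u_2\|\le C\rho^2 M(u)$ is correct but, as you yourself note, not needed once the Green's-function argument is in place.
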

A proof is given in the appendix, see Lemma~\ref{lemRegIso}. As a direct
application we get

\begin{lemma}\label{lemInd}
Let $K\subset D$ be a compact set, $n_0\in\N$ and 
$0<d_0<\mathbb{e}(K,\partial {\mathscr D}_n)$ for all $n\geq n_0$.

Let $k\in\N_0$ and let ${\mathbf v}_{i_1},\dots,{\mathbf v}_{i_k}\in
\bigcap_{n\geq n_0}{\mathbf V}_n$ be $k$ not
necessarily different directions. Let $J\subset\{1,\dots,d\}$ be a minimal
subset of indices such that
$\{{\mathbf v}_{i_1},\dots,{\mathbf v}_{i_k}\} \subset \text{span}\{{\mathbf
e}_j :j\in J\}$.
Let $B_0,C_0\geq 0$ be some constants.
Assume that all discrete partial derivatives using at most $k$ of the directions
${\mathbf v}_{i_1},\dots,{\mathbf v}_{i_k}$ exist on
$U_0=U_J(K+d_0,B_0,\Omega_{{\mathscr D}_n})$ and are bounded on $U_0$ by
$C_0\eps_n$ for all $n\geq n_0$.

Let $n\geq n_0$ and
let $\Omega_{\hat{\mathscr D}_n}$ be a two-dimensional monotone combinatorial
surface. Let $\hat{\mathscr D}_n$ be the corresponding rhombic embedding with
edge lengths $\eps_n$ and such that
\[\{\hat{\mathbf z}\in V(\Omega_{\hat{\mathscr D}_n})
: z\in V(\hat{\mathscr D}_n)\cap (K+d_0)\} \subset U_0.\]
Let $z_0\in V_w(\hat{\mathscr D}_n) \cap (K+d_0/2)$ be a white vertex such that
$\mathbb{e}(z,\partial \hat{\mathscr D}_n)\geq d_0/2$. 
Let ${\mathbf v}_{i_{k+1}}\in {\mathbf V}$ be a direction contained in
$\Omega_{\hat{\mathscr D}_n}$ at $\hat{\mathbf z}_0$.
Then there is a constant $C_1$, depending on $K$, $D$, $g$, and on the constants
$d_0$, $B_0$, $C_0$, $\kappa$, $C$, but not on $z_0$, ${\mathbf
v}_{i_{k+1}}$, and $n$, such that
\begin{equation*}
 |\partial_{{\mathbf v}_{i_{k+1}}} \partial_{{\mathbf
v}_{i_{k}}}\cdots \partial_{{\mathbf v}_{i_1}}(t_n-h_n)(z_0)|\leq C_1\eps_n.
\end{equation*}
\end{lemma}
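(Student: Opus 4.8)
The plan is to prove this estimate directly; it is the inductive step towards the $C^\infty$-result, so I fix $k\ge 0$ together with the hypothesis on discrete derivatives of order $\le k$. I would write $u:=\partial_{{\mathbf v}_{i_k}}\cdots\partial_{{\mathbf v}_{i_1}}(t_n-h_n)$, a real function on white vertices, and begin with the elementary \emph{bootstrap}: forming one more difference quotient turns a quantity that is $\Od(\eps_n)$ on $U_0$ into one that is $\Od(1)$ on a slightly smaller set, so $\partial_{\mathbf w}u=\Od(1)$ there, and since $h_n=\log|g'|$ composed with the linear projection $\Z^d\to\C$ is smooth, also every discrete derivative of $t_n$ of order $\le k+1$ is $\Od(1)$ near $z_0$. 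This weak bound on the derivative of order $k+1$ is what I get for free; the work is to improve it to $\Od(\eps_n)$ at $z_0$ by discrete elliptic regularity.

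The key step would be to show $\Delta^{\eps_n}u=\Od(\eps_n)$ at all vertices $v$ of $\hat{\mathscr D}_n$ near $z_0$, where $\Delta^{\eps_n}=\eps_n^{-2}\Delta$ and $\Delta$ is the Laplacian of Definition~\ref{defLap}/Lemma~\ref{lemRegIso1} built from the edges of $\hat{\mathscr D}_n$ at $v$; equivalently $\Delta u=\Od(\eps_n^{3})$, hence $M(u)=\Od(\eps_n)$ since $4F^{*}(v)\asymp\eps_n^{2}$. For such $v$, with $\star(v)$ its fan of edge directions, I would use that the comparison function $w_n$ (hence $t_n$) is defined on the whole brick $\Pi(\Omega_{{\mathscr D}_n})\supset{\cal F}_\kappa(\Omega_{{\mathscr D}_n})$ and satisfies the Hirota equation~\eqref{eqw} on every facet, together with $t_n-h_n=\Od(\eps_n)$ on ${\cal F}_\kappa$ from Lemma~\ref{corlem1}, to conclude as in the derivation of~\eqref{eqdefFv} that $\Delta^{\eps_n}_{\star(v)}t_n=\eps_n F_{\star(v)}(\eps_n,(\partial_{\mathbf u}t_n)_{{\mathbf u}\in\star(v)})$ at $v$ and at each translate of $v$ by a sub-sum of ${\mathbf v}_{i_1},\dots,{\mathbf v}_{i_k}$; the hypotheses ${\cal F}_\kappa$, $C_{J_0}({\cal F}_\kappa(\Omega_{{\mathscr D}_n}))\ge C_{J_0}>0$ and $U_0=U_J(K+d_0,B_0,\Omega_{{\mathscr D}_n})$ are exactly what makes these finitely many translated fans stay inside ${\cal F}_\kappa$. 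For $h_n$, a Taylor expansion along the projected edge directions of $\star(v)$ kills the first-order term by telescoping (as in the proof of Lemma~\ref{lem1}) and the second-order term because the rhombic Laplacian is isotropic and $\log|g'|$ is harmonic, leaving $\Delta^{\eps_n}_{\star(v)}h_n=\eps_n T_{\star(v)}+\Od(\eps_n^{2})$ with $T_{\star(v)}$ smooth in $z$. Since $\Delta^{\eps_n}_{\star(v)}$ is a constant-coefficient difference operator it commutes with $\partial^{(k)}:=\partial_{{\mathbf v}_{i_k}}\cdots\partial_{{\mathbf v}_{i_1}}$, so $\Delta^{\eps_n}_{\star(v)}u=\eps_n\,\partial^{(k)}\bigl(F_{\star(v)}-T_{\star(v)}\bigr)+\Od(\eps_n^{2})$; because $F_{\star(v)}$ is $C^\infty$ in its arguments (the remark after~\eqref{eqdefFv}) and these arguments together with their discrete derivatives of order $\le k$ are $\Od(1)$ by the bootstrap, a discrete Leibniz/chain rule gives $\partial^{(k)}F_{\star(v)}=\Od(1)$, and $\partial^{(k)}T_{\star(v)}=\Od(1)$ trivially, whence $\Delta^{\eps_n}_{\star(v)}u=\Od(\eps_n)$.

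Then I would apply the Regularity Lemma~\ref{lemRegIso1} to the rescaled unit pattern $\eps_n^{-1}\hat{\mathscr D}_n$ and the function $\tilde u(\cdot)=u(\eps_n\cdot)$, with $W$ the white vertices of $\hat{\mathscr D}_n$ within Euclidean distance $d_0/4$ of $z_0$: by hypothesis $W\subset U_0$, so $\|\tilde u\|_W=\Od(\eps_n)$; the rescaled distance of $\hat{\mathbf z}_0$ to $\partial W$ is $\rho\asymp\eps_n^{-1}$; and $M(\tilde u)=\Od(\eps_n^{3})$ by the previous step. With $x_1=\hat{\mathbf z}_0+{\mathbf v}_{i_{k+1}}$ (a white vertex incident to $\hat{\mathbf z}_0$ in $G(\hat{\mathscr D}_n)$, since ${\mathbf v}_{i_{k+1}}$ is a diagonal contained in $\Omega_{\hat{\mathscr D}_n}$ at $\hat{\mathbf z}_0$) the lemma gives
\[
|u(\hat{\mathbf z}_0)-u(x_1)|\ \le\ \frac{C_5\|\tilde u\|_W}{\rho}+\rho\,C_6 M(\tilde u)\ =\ \Od(\eps_n^{2}),
\]
and since $u(x_1)-u(\hat{\mathbf z}_0)=\eps_n|v_{i_{k+1}}|\,\partial_{{\mathbf v}_{i_{k+1}}}\partial_{{\mathbf v}_{i_k}}\cdots\partial_{{\mathbf v}_{i_1}}(t_n-h_n)(z_0)$ with $|v_{i_{k+1}}|$ bounded below by~\eqref{eqproda}, dividing by $\eps_n|v_{i_{k+1}}|$ yields the claimed bound $\Od(\eps_n)$. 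The resulting constant depends on $K,D,g,C$ and on $d_0,B_0,C_0,\kappa$, but not on $z_0$, ${\mathbf v}_{i_{k+1}}$, or $n$, provided $C_5,C_6$ and the implied $\Od$-constants are uniform over edge directions obeying~\eqref{eqproda}.

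I expect the main obstacle to be not any single estimate but the combinatorial bookkeeping making the above legitimate uniformly in $n$: one must guarantee that every fan $\star(v)$ entering $M(u)$, together with its bounded translates by ${\mathbf v}_{i_1},\dots,{\mathbf v}_{i_k}$, lies in ${\cal F}_\kappa(\Omega_{{\mathscr D}_n})$ — where simultaneously $t_n-h_n=\Od(\eps_n)$ holds (so the kite angles close to $2\pi$, not $4\pi$) and~\eqref{eqdefFv} applies — and that the Regularity Lemma constants and all $\Od$-constants are independent of $n$ and of the particular rhombic embedding. This is precisely the purpose of the constants $C_{J_0}(\cdot)$ and of passing from $\Omega_{{\mathscr D}_n}$ to the enlarged ${\cal F}_\kappa(\Omega_{{\mathscr D}_n})$. (For the ``very regular'' patterns of Remark~\ref{remeps2}, the same scheme with the sharper input $h_n-t_n=\Od(\eps_n^{2})$ would give $\Od(\eps_n^{2})$ instead.)
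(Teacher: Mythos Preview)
Your proposal is correct and follows essentially the same route as the paper: set $u=\partial^{(k)}(t_n-h_n)$, use the bootstrap to get order-$(k{+}1)$ derivatives of $t_n$ bounded by $\Od(1)$, commute $\Delta^{\eps_n}$ past $\partial^{(k)}$, invoke~\eqref{eqdefFv} together with the $C^\infty$-dependence of $F_{\star(v)}$ and the harmonicity/smoothness of $h_n$ to get $\Delta^{\eps_n}u=\Od(\eps_n)$, and finish with the Regularity Lemma~\ref{lemRegIso1}. Your explicit rescaling to unit edge length before applying Lemma~\ref{lemRegIso1} is a welcome clarification (the appendix normalizes edge lengths to one), and your closing remarks about the combinatorial bookkeeping inside ${\cal F}_\kappa$ accurately identify where care is needed; the paper handles these points in the same way but more tersely.
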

\begin{proof}
The proof is an application of the Regularity Lemma~\ref{lemRegIso1} for the
function
$u=\partial_{{\mathbf v}_{i_{k}}}\cdots \partial_{{\mathbf v}_{i_1}}(t_n-h_n)$
and the part of the rhombic embedding $\hat{\mathscr D}_n$ contained in
$K+d_0$. Note that the edge lengths of $\hat{\mathscr D}_n$ are $\eps_n$ and we
suppose that $\mathbb{e}(z,\partial \hat{\mathscr D}_n)\geq d_0/2$.

By assumption we have $\|u\|_{V(\hat{\mathscr D}_n)\cap (K+d_0)}\leq C_0\eps_n$.
As $h_n$ is a $C^\infty$-function,
this implies that $\partial_{{\mathbf v}_{i}} \partial_{{\mathbf
v}_{i_{k}}}\cdots \partial_{{\mathbf v}_{i_1}}t_n = \Od(1)$ on
$U_0$ for all possible directions ${\mathbf v}_i\in{\mathbf V}$  whenever this
partial
derivative is defined in ${\cal F}_\kappa(\Omega_{\hat{\mathscr D}_n})$.

Let $z_1\in V_{int}(\hat{G}_n)\cap (K+d_0)$ be an interior white vertex of
$\hat{\mathscr D}_n$ and let ${\mathbf v}_{\mu_1},
\dots,{\mathbf v}_{\mu_L}\in{\mathbf V}_n$ be the directions corresponding to
the directions
of the edges of $\hat{G}_n$ incident to $z_1$. 
Note that
\[\Delta^{\eps_n}_{{\mathbf v}_{\mu_1}, \dots,{\mathbf v}_{\mu_L}} \!u =
  \Delta^{\eps_n}_{{\mathbf v}_{\mu_1}, \dots,{\mathbf v}_{\mu_L}}
\partial_{{\mathbf v}_{i_{k}}}\!\!\!\cdots \partial_{{\mathbf v}_{i_1}}(t_n-h_n)
  =\partial_{{\mathbf v}_{i_{k}}}\!\!\!\cdots
\partial_{{\mathbf v}_{i_1}} \Delta^{\eps_n}_{{\mathbf v}_{\mu_1},
\dots,{\mathbf v}_{\mu_L}}(t_n-h_n). \]
From the above consideration in~\eqref{eqdefFv} we use
   that $\Delta^{\eps_n}_{{\mathbf v}_{\mu_1}, \dots,{\mathbf v}_{\mu_L}}
 t_n=\eps_n F_{{\mathbf v}_{\mu_1}, \dots,{\mathbf v}_{\mu_L}}$, where
$F_{{\mathbf v}_{\mu_1}, \dots,{\mathbf v}_{\mu_L}}$ is a
  $C^\infty$-function in the variables $\eps_n,
  \partial_{{\mathbf v}_{\mu_1}} t_n, \dots,\partial_{{\mathbf v}_{\mu_L}}
t_n$.
From our assumptions we know that all partial 
  derivatives $\partial_{{\mathbf v}_{\mu_i}} \partial_{{\mathbf
v}_{i_{k}}}\cdots \partial_{{\mathbf v}_{i_1}}t_n(z_1)$ and those containing
less than $k+1$ of the derivatives $\partial_{{\mathbf v}_{\mu_i}},
\partial_{{\mathbf v}_{i_{k}}},\dots, \partial_{{\mathbf v}_{i_1}}$ are defined
and uniformly bounded by a constant independent of $z_1$ and $\eps_n$. Thus
\[\left|\partial_{{\mathbf v}_{i_{k}}}\cdots
\partial_{{\mathbf v}_{i_1}} \Delta^{\eps_n}_{{\mathbf v}_{\mu_1},
\dots,{\mathbf v}_{\mu_L}}t_n(z_1)\right| \leq C_{{\mathbf v}_{\mu_1},
\dots,{\mathbf v}_{\mu_L}} \eps_n,\]
where the constant $C_{{\mathbf v}_{\mu_1}, \dots, {\mathbf v}_{\mu_L}}$ does
not depend on $z_1$ and $\eps_n$. As $h_n$ is a harmonic $C^\infty$-function,
we obtain by similar reasonings as in the proof of Lemma~\ref{lem1} that
\[\partial_{{\mathbf v}_{i_{k}}}\cdots
\partial_{{\mathbf v}_{i_1}} \Delta^{\eps_n}_{{\mathbf v}_{\mu_1},
\dots,{\mathbf v}_{\mu_L}}h_n =\Od(\eps_n). \]
As ${\mathbf V}_n$ is a finite set, we deduce
that $\Delta^{\eps_n} u$ is uniformly bounded on $V_{int}(\hat{G}_n)\cap
(K+d_0)$
by $C_2\eps_n$ for some constant $C_2$ independent of $\eps_n$.
Now the Regularity Lemma~\ref{lemRegIso1} gives the claim.
\end{proof}

The following corollary of the preceeding lemma constitutes the crucial step in
our proof of Theorem~\ref{theoConvQCinfty}.

\begin{lemma}\label{lemEstpart}
 Let $K\subset D$ be a compact set and let $0<d_1<\mathbb{e}(K,\partial
{D})$.
Let $n_0\in\N$ be such that $K+d_1$ is covered by the rhombi of ${\mathscr
D}_n$ for all $n\geq n_0$.
Then there is a constant $C_1 =C_1(K,C_{J_0}({\cal F}_\kappa(\Omega_{{\mathscr
D}_n})))>0$ such that for all $z\in K+d_1$
\[C_1\eps_n^{-1}\leq C_{J_0}({\cal F}_\kappa(\Omega_{{\mathscr
D}_n})) \cdot d(z,\partial {\mathscr D}_n) -1.\]

Furthermore, let $k\in\N_0$ and let ${\mathbf v}_{i_1},\dots,{\mathbf
v}_{i_k}\in {\mathbf V}$
be $k$ (not necessarily different) directions such that ${\mathbf v}_{i_l}\in
\text{span}\{e_j : j\in J_0\}$ for all $l=1,\dots,k$.
Then there are constants $n_0\leq n_1(k,K) \in\N$ and $C(k,K)>0$ which may
depend on $k$, $K$, $d_1$, $D$, $g$, $\kappa$, $C_{J_0}({\cal
F}_\kappa(\Omega_{{\mathscr D}_n}))$, but not on $\eps_n$, such that for all
$n\geq n_1(k,K)$ we have
\begin{equation}\label{eqestpart}
 \|\partial_{{\mathbf v}_{i_{k}}}\cdots \partial_{{\mathbf
v}_{i_1}}(t_n-h_n)\|_{U_k}\leq C(k,K)\eps_n,
\end{equation}
where $U_k= U_{J_0}(K+2^{-k}d_1, 2^{-k}C_1\eps_n^{-1}, \Omega_{{\mathscr
D}_n})$.
\end{lemma}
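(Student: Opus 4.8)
The plan is to prove the two assertions separately. The first is an elementary distance estimate and the second an induction on $k$ whose engine is Lemma~\ref{lemInd} (and, through it, the Regularity Lemma~\ref{lemRegIso1} and the Taylor identity~\eqref{eqdefFv}).

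For the first assertion, note that every $z\in K+d_1$ lies at Euclidean distance at least $\mathbb{e}(K,\partial D)-d_1>0$ from $\partial D$. Since the rhombi of ${\mathscr D}_n$ cover $K+d_1$ and have edge length $\eps_n$, the combinatorial distance $d(z,\partial{\mathscr D}_n)$ is bounded below by $C_5\,\eps_n^{-1}$ for a constant $C_5>0$ depending only on $K$, $d_1$ and $D$. With the hypothesis $C_{J_0}({\cal F}_\kappa(\Omega_{{\mathscr D}_n}))\geq C_{J_0}>0$ this gives, for $\eps_n$ small, $C_{J_0}({\cal F}_\kappa(\Omega_{{\mathscr D}_n}))\,d(z,\partial{\mathscr D}_n)-1\geq C_{J_0}C_5\,\eps_n^{-1}-1\geq C_1\,\eps_n^{-1}$, so any $C_1\le\tfrac12 C_{J_0}C_5$ works; I would in addition take $C_1$ small enough (in terms of $d_1$ and $D$) that the balls introduced below nest with room to spare. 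Writing $d_k=2^{-k}d_1$ and $B_k=2^{-k}C_1\eps_n^{-1}$, the same estimate applied at interior vertices $\hat{\mathbf z}$ of $\Omega_{{\mathscr D}_n}$ whose projection lies in $K+d_k\subset K+d_1$ gives $B_k\leq C_1\eps_n^{-1}\leq B_{J_0}(\hat{\mathbf z},{\cal F}_\kappa(\Omega_{{\mathscr D}_n}))$, hence by the remark after Definition~\ref{defC12} that $U_k=U_{J_0}(K+d_k,B_k,\Omega_{{\mathscr D}_n})\subset{\cal F}_\kappa(\Omega_{{\mathscr D}_n})$ for every $k$ and all large $n$. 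In particular $t_n$, $h_n$ and all the discrete partial derivatives occurring in~\eqref{eqestpart} are defined on $U_k$.

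The second assertion I would prove by (strong) induction on $k$. For $k=0$ the inclusion $U_0\subset{\cal F}_\kappa(\Omega_{{\mathscr D}_n})$ just noted, together with Lemma~\ref{corlem1}, gives $\|t_n-h_n\|_{U_0}=\Od(\eps_n)$, which is~\eqref{eqestpart} for $k=0$. Suppose~\eqref{eqestpart} holds for all $m\le k$, with constants $C(m,K)$ valid for $n\geq n_1(m,K)$ and all $m$-tuples of directions in $\operatorname{span}\{{\mathbf e}_j:j\in J_0\}$. Fix directions ${\mathbf v}_{i_1},\dots,{\mathbf v}_{i_k}$ and a further direction ${\mathbf v}_{i_{k+1}}$, all in $\operatorname{span}\{{\mathbf e}_j:j\in J_0\}$, and a white vertex ${\mathbf z}\in U_{k+1}$. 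I would apply Lemma~\ref{lemInd} with $J=J_0$, compact set $K+d_{k+1}$, neighbourhood parameter $d_0=d_{k+1}$ and ball radius $B_0=B_k$: since $2d_{k+1}=d_k$, its hypothesis region is $U_{J_0}(K+d_{k+1}+d_{k+1},B_k,\Omega_{{\mathscr D}_n})=U_k$, and since $U_k\subset U_m$ for $m\le k$ the induction hypothesis provides the bound $C_0=\max_{m\le k}C(m,K)$ for all discrete partial derivatives of $t_n-h_n$ using at most $k$ of ${\mathbf v}_{i_1},\dots,{\mathbf v}_{i_k}$ on $U_k$. The last datum Lemma~\ref{lemInd} needs is a monotone combinatorial surface $\Omega_{\hat{\mathscr D}_n}$ carrying the diagonal direction ${\mathbf v}_{i_{k+1}}$ at $\hat{\mathbf z}$ and whose part over $K+d_{k+1}$ lies in $U_k$; such a surface is obtained from $\Omega_{{\mathscr D}_n}$ by a sequence of flips (Definition~\ref{defFlip}) that stays inside $U_k\subset{\cal F}_\kappa(\Omega_{{\mathscr D}_n})$, which is possible precisely because ${\cal F}_\kappa$ contains all flip-reachable vertices together with the $[\kappa]$-padding around the altered facets, and because $U_k$ is a large ($\sim\eps_n^{-1}$) neighbourhood, so that a bounded number of flips changes nothing outside it. Lemma~\ref{lemInd} then yields $|\partial_{{\mathbf v}_{i_{k+1}}}\cdots\partial_{{\mathbf v}_{i_1}}(t_n-h_n)({\mathbf z})|\leq C'\eps_n$ with $C'$ depending only on $k$, $K$, $d_1$, $D$, $g$, $\kappa$, $C_{J_0}$, provided $\eps_n$ is small enough; as $k$ is fixed and ${\mathbf V}$ is finite this determines $n_1(k+1,K)$, and taking the supremum over ${\mathbf z}\in U_{k+1}$ and over the finitely many admissible $(k+1)$-tuples gives~\eqref{eqestpart} for $k+1$ with $C(k+1,K):=\max C'$.

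All the analytic work is done inside Lemma~\ref{lemInd}, so the remaining difficulty is purely combinatorial: choosing the nested regions $U_k$ so that passing from step $k$ to step $k+1$ still leaves room to attach the extra direction ${\mathbf v}_{i_{k+1}}$ and to carry out the flips producing $\Omega_{\hat{\mathscr D}_n}$ without leaving ${\cal F}_\kappa(\Omega_{{\mathscr D}_n})$. The halving of both $d_k$ and $B_k$ at each step provides exactly this room, at the harmless cost of $n_1(k,K)\to\infty$ as $k\to\infty$. I expect the verification of these inclusions --- in particular that a controlled sequence of flips realizes any direction in $\operatorname{span}\{{\mathbf e}_j:j\in J_0\}$ at a given deep vertex of $\Omega_{{\mathscr D}_n}$ while staying inside ${\cal F}_\kappa$ --- to be the main obstacle; everything else is routine bookkeeping once Lemma~\ref{lemInd} is available.
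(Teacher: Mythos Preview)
Your overall architecture matches the paper's: an elementary distance bound for the first assertion, then induction on $k$ with Lemma~\ref{lemInd} as the engine. The base case via Lemma~\ref{corlem1} and the nesting $U_{k+1}\subset U_k\subset{\cal F}_\kappa(\Omega_{{\mathscr D}_n})$ are handled the same way.

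The one substantive difference is how you produce the auxiliary surface $\Omega_{\hat{\mathscr D}_n}$ for Lemma~\ref{lemInd}. You propose to manufacture it by a sequence of flips inside ${\cal F}_\kappa(\Omega_{{\mathscr D}_n})$ and correctly flag this as the main obstacle. The paper avoids this obstacle altogether: writing ${\mathbf v}_{i_{k+1}}\in\{\pm{\mathbf e}_{j_1}\pm{\mathbf e}_{j_2}\}$ with $j_1,j_2\in J_0$, it simply \emph{defines}
\[
\Omega_{\hat{\mathscr D}_n}
= U_{\{j_1,j_2\}}\bigl(K+2^{-k}d_1,\,2^{-k}C_1\eps_n^{-1},\,\Omega_{{\mathscr D}_n}\bigr)
\subset U_{J_0}\bigl(K+2^{-k}d_1,\,2^{-k}C_1\eps_n^{-1},\,\Omega_{{\mathscr D}_n}\bigr)=U_k,
\]
the union of two-index balls attached along $\Omega_{{\mathscr D}_n}$, and takes $\hat{\mathscr D}_n$ to be the rhombic embedding obtained by projection. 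Because only the two directions ${\mathbf e}_{j_1},{\mathbf e}_{j_2}$ are used, every vertex of $\Omega_{\hat{\mathscr D}_n}$ automatically carries the diagonal ${\mathbf v}_{i_{k+1}}$ as a facet direction, and the inclusion $\{j_1,j_2\}\subset J_0$ gives $\Omega_{\hat{\mathscr D}_n}\subset U_k$ for free. No flip argument is needed, and what you anticipated as the main difficulty disappears. Your route via flips is not wrong in spirit, but it is harder to make rigorous and unnecessary once one sees this explicit two-dimensional choice.
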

\begin{proof}
The existence of the constant $C_1$ follows from the fact that $d(z,\partial
{\mathscr D}_n)/\eps_n$ is bounded from below for $z\in (K+d_1)$ since the
distance $\mathbb{e}(K+d_1,\partial {\mathscr D}_n)>0$ is positive and the
angles of the rhombi are uniformly bounded.

The proof of estimation~\eqref{eqestpart} uses induction on the number of
partial derivatives $k$.
For $k=0$ the claim has been shown in Lemma~\ref{corlem1}.

Let $k\in\N_0$ and assume that the claim is true for all $\nu\leq k$. 
Let ${\mathbf v}_{i_{k+1}}\in {\mathbf V}$ be a direction with ${\mathbf
v}_{i_{k+1}}\in \{\pm
{\mathbf e}_{j_1}\pm {\mathbf e}_{j_2} \}\subset \text{span}\{{\mathbf e}_j :
j\in J_0\}$.
Using the induction hypotheses, we can apply Lemma~\ref{lemInd} for $U_0=U_k$,
$d_0 =2^{-k}d_1$, $\Omega_{\hat{{\mathscr D}_n}}
=U_{\{j_1,j_2\}}(K+2^{-k}d_1, 2^{-k}C_1\eps_n^{-1}, \Omega_{{\mathscr D}_n})
\subset U_k$ and the corresponding rhombic embedding $\hat{{\mathscr D}_n}$
obtained by projection, and $z_0\in \hat{{\mathscr D}_n} \cap (K+2^{-k-1}d_1)$.
This completes the induction step and the proof.
\end{proof}

\begin{proof}[Proof of Theorem~\ref{theoConvQCinfty}]
Identify $\C$ with $\R^2$ in the
standard way and fix two orthogonal unit vectors
$e_1,e_2$. Define discrete partial derivatives $\partial_{e_1},  
\partial_{e_2}$ in these directions using the discrete partial derivatives
in two orthogonal directions 
$v_{i_1}=a_{j_1}^{(n)}+a_{j_2}^{(n)}$ and $v_{i_2}=a_{j_1}^{(n)}-a_{j_2}^{(n)}$
for
$j_1,j_2\in J_0$. This definition depends on the choice
of $a_{j_1}^{(n)},a_{j_2}^{(n)}$, which may be different for each $n$, but this
does not affect the proof.
As the possible intersection angles are bounded and as $h_n$ is a
$C^\infty$-function, we deduce that
\[\|\partial_{e_{j_k}}\cdots \partial_{e_{j_1}}h_n
-\partial_{{j_k}}\cdots \partial_{{j_1}}h_n\|_K \leq C_1(k,K)\eps_n\]
on every compact set $K$ for $j_k,\dots,j_1\in\{1,2\}$.
Here $\partial_1,\partial_2$ denote the standard
partial derivatives associated to $e_1,e_2$ for smooth functions and $C_1(k,K)$
is a constant which depends only on $K$, $k$, and $g$. Lemma~\ref{lemEstpart}
implies that 
\begin{equation*}
\|\partial_{e_{j_k}}\cdots
\partial_{e_{j_1}}t_n- \partial_{e_{j_k}}\cdots
\partial_{e_{j_1}}h_n\|_{U_{J_0}(K+2^{-k}d_1, 2^{-k}C_1\eps_n^{-1},
\Omega_{{\mathscr D}_n})} \leq C_2(k,K)\eps_n
\end{equation*}
if $n$ is big enough. Using a version of Lemma~\ref{lem2} with error of order
$\Od(\eps_n)$, we deduce that
$t_n+i(\varphi_n-\phi_n)$ converges to $\log g'$ in $C^\infty(D)$. Now the
convergence of $q_n$ and $g_n$ follows by similar arguments as in the proof of
Theorem~\ref{theoConvC1}.
\end{proof}

\appendix

\section[Appendix: Regularity of solutions of
discrete elliptic equations]{Appendix: Properties of discrete Green's function
and regularity of solutions of
discrete elliptic equations}\label{secPropGreen}

In order to prove the Regularity Lemma~\ref{lemRegIso1} (see Lemma
~\ref{lemRegIso}), we present some results in discrete potential theory on
quasicrystallic rhombic embeddings which are derived from a suitable asymptotic
expansion of a discrete Green's function.

Throughout this appendix, we assume that $\mathscr D$ is a (possibly
infinite) simply connected quasicrystallic rhombic embedding of a b-quad-graph
with edge directions ${\cal A} =\{\pm a_1,\dots,\pm a_d\}$.
Also, the edge lengths of  $\mathscr D$ are supposed
to be normalized to one.
Let $G$ be the associated graph built from white vertices.

Fix some interior vertex $x_0\in V_{int}(G)$. 
Following Kenyon~\cite{Ke02} and Bobenko, Mercat, and Suris~\cite{BMS05},
we define the
{\em discrete Green's function} ${\cal G}(x_0,\cdot):V(G)\to\R$ by
\begin{equation}\label{defGreen}
{\cal G}(x_0,x)= -\frac{1}{4\pi^2 i} \int_{\Gamma} \frac{\log
(\lambda)}{2\lambda} e(x;\lambda) d\lambda
\end{equation}
for all $x\in V(G)$. Here $
e(x;z)=\prod_{k=1}^d\left(\frac{z+a_k}{z-a_k}\right)^{n_k}$ is the {\em discrete
exponential function},
where ${\mathbf n}=(n_1,\dots,n_d)=\hat{\mathbf x}- \hat{\mathbf x}_0\in \Z^d$
and $\hat{\mathbf x}, \hat{\mathbf x}_0\in V(\Omega_{\mathscr D})$ correspond
to $x,x_0\in V({\mathscr D})$ respectively.
The integration path $\Gamma$ is a collection of $2d$ small loops, each one
running counterclockwise around one of the points $\pm a_k$ for $k=1,\dots, d$.
The branch of
$\log(\lambda)$ depends on $x$ and is chosen as follows.
Without loss of generality, we assume that the circular order of the points of
$\cal A$ on the positively oriented unit circle $\Sp^1$ is $a_1,\dots,
a_d,-a_1,\dots, -a_d$. Set $a_{k+d}=-a_k$ for $k=1,\dots,d$ and define
$a_m$ for all $m\in\Z$ by $2d$-periodicity. To each
$a_m=\text{e}^{i\theta_m}\in\Sp^1$ we assign a certain value of the argument
$\theta_m\in\R$: choose $\theta_1$ arbitrarily and then use the rule
\[\theta_{m+1}-\theta_m\in(0,\pi)\qquad \text{for all }m\in\Z. \]
Clearly we then have $\theta_{m+d}=\theta_m+\pi$.
The points $a_m$ supplied with the arguments $\theta_m$ can be considered as
belonging to the Riemann surface of the logarithmic function (i.e.\ a branched
covering of the complex $\lambda$-plane). 
Since $\Omega_{\mathscr D}$ is a monotone surface, there is an $m\in\{1,\dots,
2d\}$ and a directed path from $x_0$ to $x$ in $\mathscr D$ such that the
directed edges of this path are contained in $\{a_m,\dots,a_{m+d-1}\}$,
see~\cite[Lemma~18]{BMS05}. Now, the branch of
$\log(\lambda)$ in~\eqref{defGreen} is chosen such that
\[ \log(a_l)\in[i\theta_m,i\theta_{m+d-1}],\qquad l=m,\dots,m+d-1.\]

Remember Definition~\ref{defLap} of the discrete Laplacian and the
representation of its weights $c([z_1,z_2])= 2f_{\alpha([z_1,z_2])}'(0)$
in~\eqref{eqLapc}. Then there holds

\begin{lemma}[{\cite[Theorems~7.1 and~7.3]{Ke02}}]
The discrete Green's function ${\cal G}(x_0,\cdot)$ defined in
equation~\eqref{defGreen} has the following properties.
\begin{enumerate}[(i)]
 \item $\Delta {\cal G}(x_0,v)= -\delta_{x_0}(v)$, where the Laplacian is taken
with respect to the second variable.
\item ${\cal G}(x_0,x_0)=0$.
\item ${\cal G}(x_0,v)=\Od(\log(|v-x_0|))$.
\end{enumerate}
\end{lemma}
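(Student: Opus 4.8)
The plan is to reproduce Kenyon's argument in the present notation, treating the three assertions in turn. Write $\hat{\mathbf x}-\hat{\mathbf x}_0={\mathbf n}\in\Z^d$, so that for each fixed $x$ the discrete exponential $e(x;\lambda)=\prod_{k=1}^d\bigl(\tfrac{\lambda+a_k}{\lambda-a_k}\bigr)^{n_k}$ is a rational function of $\lambda$ with poles and zeros only at the $2d$ points $\pm a_k$, and recall from~\eqref{eqLapc} that the weights of $\Delta$ are $c([z_1,z_2])=2f_{\alpha([z_1,z_2])}'(0)=\cot\bigl(\alpha([z_1,z_2])/2\bigr)>0$. Property~(ii) is then immediate: for $x=x_0$ we have ${\mathbf n}=\mathbf 0$, hence $e(x_0;\cdot)\equiv 1$ and the integrand in~\eqref{defGreen} reduces to $\tfrac{\log\lambda}{2\lambda}$, which is holomorphic in a neighbourhood of each $\pm a_k$ for the chosen branch of $\log$ (as $0\notin\{\pm a_k\}$); by Cauchy's theorem each of the $2d$ loops making up $\Gamma$ contributes $0$, so ${\cal G}(x_0,x_0)=0$.

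For~(i) the key input is that the discrete exponential is discrete harmonic on the white graph, i.e.\ $\Delta_v e(v;\lambda)=0$ for every interior vertex $v$ and every $\lambda$. This is a direct computation on the rhombi incident to $v$ using the representation~\eqref{eqLapc} of the weights together with the admissibility condition~\eqref{condalpha} at the black vertex in the centre of the star of $v$ (equivalently, it expresses that a discrete holomorphic function restricts to a discrete harmonic one on white vertices; see~\cite{BMS05}). Since $\Delta$ is a finite linear combination of point evaluations one would like to move it inside the integral~\eqref{defGreen}, which would give $0$; the obstruction is that the branch of $\log\lambda$ prescribed in~\eqref{defGreen} depends on $v$ through the combinatorial sector $\{a_m,\dots,a_{m+d-1}\}$ of directions of a monotone path from $x_0$ to $v$ (Lemma~\ref{lemMonoton}). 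Concretely, I would write ${\cal G}(x_0,v_j)$, for each neighbour $v_j$ of $v$, as the integral with $v$'s branch plus a correction term: a locally constant multiple of $2\pi i$ on those loops of $\Gamma$ where the two branches disagree, times $\tfrac12\oint e(v_j;\lambda)/\lambda\,d\lambda$. Summing over the star of $v$, the main terms combine to $-\tfrac{1}{4\pi^2 i}\int_\Gamma\tfrac{\log\lambda}{2\lambda}\,\Delta_v e(v;\lambda)\,d\lambda=0$; a careful accounting of the corrections shows that they cancel whenever $v\neq x_0$ (no loop receives a net $2\pi i$ after one turn around the star), while for $v=x_0$ the sector winds once around, so the $2d$ loops pick up residues which — evaluated with~\eqref{condalpha} at $x_0$ — add up to exactly $-1$, in agreement with the normalisation $-\tfrac{1}{4\pi^2 i}$. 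This yields $\Delta_v{\cal G}(x_0,v)=-\delta_{x_0}(v)$.

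For~(iii), which is just the leading term of Kenyon's asymptotic expansion, I would estimate~\eqref{defGreen} by the saddle-point method. Writing $e(v;\lambda)=\exp\!\bigl(|{\mathbf n}|_1\,\Phi(\lambda)\bigr)$ with $\Phi(\lambda)=\sum_k\tfrac{n_k}{|{\mathbf n}|_1}\log\tfrac{\lambda+a_k}{\lambda-a_k}$ and $|{\mathbf n}|_1=\sum_k|n_k|$, the uniform angle bound~\eqref{eqproda} together with the monotonicity of $\Omega_{\mathscr D}$ gives $|{\mathbf n}|_1\asymp|v-x_0|$. Deforming $\Gamma$ off the branch points $\pm a_k$ onto a steepest-descent contour through the saddle $\lambda_\ast$ of $\Phi$ — which by~\eqref{eqproda} stays bounded away from the $\pm a_k$ and from $0$ — the integral is dominated by a neighbourhood of $\lambda_\ast$, of size $\Od\bigl(|{\mathbf n}|_1^{-1/2}\bigr)$ up to the bounded factor $\tfrac{\log\lambda_\ast}{2\lambda_\ast}$, plus exponentially small remainders; hence ${\cal G}(x_0,v)=\Od(\log|v-x_0|)$. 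Tracking the subleading terms recovers Kenyon's full expansion, but only the order of the leading term is needed here.

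The main obstacle is the branch bookkeeping in~(i): making precise how the $v$-dependent branch of $\log\lambda$ changes across the edges of the star of a vertex, and verifying that the residue contributions telescope to $0$ for $v\neq x_0$ and to precisely $-1$ at $v=x_0$ — this is exactly where the admissibility~\eqref{condalpha} and the monotonicity of $\Omega_{\mathscr D}$ are used in an essential way. The steepest-descent estimate in~(iii) is the most computation-heavy step but is routine once $\Gamma$ has been deformed correctly; the only delicate point there is keeping the saddle $\lambda_\ast$ away from the branch points, which again follows from~\eqref{eqproda}.
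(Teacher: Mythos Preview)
The paper does not prove this lemma; it is quoted from Kenyon~\cite{Ke02}, so there is no in-paper argument to compare against and you are essentially reconstructing Kenyon's proof. Part~(ii) is correct. In~(i) your outline is Kenyon's, but you twice invoke condition~\eqref{condalpha} --- once for the harmonicity of $e(\cdot;\lambda)$ and once for the residue computation at $v=x_0$ --- yet~\eqref{condalpha} is a condition at \emph{black} vertices, whereas $v$ and $x_0$ are white. The harmonicity of the discrete exponential on $V(G)$ comes from its discrete holomorphicity on the rhombic embedding (equivalently, from the white-vertex closing condition $\sum_j(\pi-\alpha_j)=2\pi$), not from~\eqref{condalpha}. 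This is easily fixed; the branch-bookkeeping sketch is otherwise along the right lines, though ``a careful accounting shows they cancel'' is exactly the step that has to be written out.

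Part~(iii) has a genuine gap: the steepest-descent argument cannot give the stated bound, because if the integral were really dominated by a saddle contribution of size $\Od(|{\mathbf n}|_1^{-1/2})$ with exponentially small remainders, you would have proved ${\cal G}(x_0,v)=\Od(|v-x_0|^{-1/2})$ and hence that ${\cal G}$ is bounded. But Theorem~\ref{theoGreen} (Kenyon's actual asymptotic, whose proof the paper reproduces) shows ${\cal G}(x_0,v)\sim-\tfrac{1}{2\pi}\log|v-x_0|$, which is unbounded. The $\log$ term does not come from any saddle of $\Phi$; it comes from the behaviour of the integrand near $\lambda=0$ and $\lambda=\infty$, where $e(v;\lambda)\approx\pm\exp\bigl(2(\bar v-\bar x_0)\lambda\bigr)$ and $e(v;\lambda)\approx\exp\bigl(2(v-x_0)\lambda^{-1}\bigr)$ respectively. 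Kenyon's method deforms $\Gamma$ to two circles of radii $\varrho_1\ll N^{-1}$ and $\varrho_2\gg N$ joined by a branch-cut ray; the circle pieces contribute $\pm\tfrac{\log\varrho_i}{4\pi}$ and the ray pieces contribute the exponential integrals that produce $\tfrac{1}{4\pi}\log\bigl(2\varrho_i|v-x_0|\bigr)+\tfrac{\gamma_{\text{Euler}}}{4\pi}$, and these combine to give the logarithmic growth. A closed steepest-descent contour that stays bounded away from $0$ and $\infty$ is not homotopic (relative to the branch cut of $\log\lambda$) to the original $\Gamma$, so your ``exponentially small remainders'' must in fact carry the entire leading term.
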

Note that  ${\cal G}(x_0,\cdot)$ may also be defined by these three conditions.

\subsection{Asymptotics for discrete Green's function}
Kenyon derived in~\cite{Ke02} an
asymptotic development for the discrete Green's function using
standard methods of complex analysis. His result can be
slightly strengthened to an error of order
$\Od(1/|v-x_0|^2)$. Note, that there is the summand $-\log2/(2\pi)$ missing in
Kenyon's formula (but not in his proof).

\begin{theorem}[cf.\ {\cite[Theorem~7.3]{Ke02}}]\label{theoGreen}
For $v\in V(G)$ there holds
\begin{equation}\label{eqestGreen}
{\cal G}(x_0,v)= -\frac{1}{2\pi}\log
(2|v-x_0|) -\frac{\gamma_{\text{Euler}}}{2\pi}
+\Od\left(\frac{1}{|v-x_0|^2}\right).
\end{equation}
Here $\gamma_{\text{Euler}}$ denotes the {\em Euler $\gamma$ constant}.
\end{theorem}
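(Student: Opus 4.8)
The plan is to carry out the asymptotic analysis of the contour integral~\eqref{defGreen} by the standard complex-analytic route — deformation of $\Gamma$ together with saddle-point estimates — essentially as in Kenyon~\cite{Ke02}, but keeping one more term in the expansion and exploiting the reflection symmetry of the integrand to see that the first correction drops out. For $|v-x_0|$ bounded the estimate~\eqref{eqestGreen} is immediate: then $v\neq x_0$ forces $|v-x_0|$ bounded below — the rhombus angles being uniformly bounded — so both ${\cal G}(x_0,v)=\Od(\log|v-x_0|)$ and $-\tfrac1{2\pi}\log(2|v-x_0|)$ are bounded. So assume $R:=|v-x_0|\to\infty$. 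Write $\mathbf n=\hat{\mathbf v}-\hat{\mathbf x}_0\in\Z^d$, so that $v-x_0=\sum_k n_k a_k$; the Monotonicity criterium~\ref{lemMonoton} together with the bounds on the geometry of the rhombic embedding forces the edges of a monotone path from $x_0$ to $v$ into a cone of opening angle strictly less than $\pi$, whence $\sum_k|n_k|=\Od(R)$. Therefore $\log e(x;\lambda)=\sum_k n_k\log\frac{\lambda+a_k}{\lambda-a_k}=\Od(R)$ uniformly on compact subsets of $\C\setminus{\cal A}$, and $R$ is a genuine large parameter.

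I would deform the $2d$ loops $\Gamma$ onto a contour controlling $\Re\log e(x;\lambda)$, isolate the contribution of the saddle of $\lambda\mapsto\frac1R\log e(x;\lambda)$, and match its local behaviour against the prefactor $\log(\lambda)/(2\lambda)$. The $\log R$ comes from the growth rate of $|e(x;\lambda)|$ in $\mathbf n$ near the saddle; the additive $\log 2$ — the term absent from the statement of~\cite[Theorem~7.3]{Ke02}, though not from its proof — comes from the factors $\lambda+a_k\approx2a_k$ near the relevant point; and the Euler constant enters through $\gamma_{\text{Euler}}=-\int_0^\infty e^{-t}\log t\,dt$ once the saddle contribution is written as a Laplace-type integral (equivalently, through Stirling's formula applied to the $\Gamma$-factors produced by the $n_k$-th powers). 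This yields the main term $-\tfrac1{2\pi}\log(2|v-x_0|)-\tfrac{\gamma_{\text{Euler}}}{2\pi}$ with an $\Od(1/R)$ remainder, which is exactly Kenyon's bound.

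To sharpen the remainder to $\Od(1/R^2)$ I would use that ${\cal A}$ is invariant under $\lambda\mapsto-\lambda$ and that $e(x;-\lambda)=e(x;\lambda)^{-1}$ — since $\frac{-\lambda+a_k}{-\lambda-a_k}=\frac{\lambda-a_k}{\lambda+a_k}$ — while under $\lambda\mapsto-\lambda$ the prefactor $\log(\lambda)/(2\lambda)$ changes only by a term proportional to $1/\lambda$, whose integral against $e(x;\lambda)$ over $\Gamma$ is elementary. The substitution $\lambda\mapsto-\lambda$ thus relates the integral for $\mathbf n$ to the one for $-\mathbf n$ up to an explicit correction; symmetrizing in this way annihilates the odd part of the asymptotic expansion, so that the leading correction to the main term is of order $R^{-2}$. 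This is the discrete counterpart of the fact that a centered difference quotient has consistency error $\Od(h^2)$ rather than $\Od(h)$.

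The main obstacle is uniformity in the saddle-point analysis: because the large parameter is the integer vector $\mathbf n$ rather than a scalar, one must show that the deformation of $\Gamma$ and the ensuing estimates are uniform as the direction $\mathbf n/|\mathbf n|$ varies and as the dimension $d$ and the admissible directions $a_k$ range over the allowed families, all the while keeping the deformed contour away from the points $\pm a_k$ and from the $x$-dependent branch cut of $\log\lambda$, whose position must be coordinated with the reflection argument. Once the contour and the local model at the saddle have been pinned down, what remains is the routine though somewhat lengthy second-order Laplace expansion.
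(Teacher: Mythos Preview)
Your framing as a saddle-point analysis is a genuine misidentification of the mechanism. The phase $\lambda\mapsto\log e(x;\lambda)$ has no saddle organising the asymptotics --- in the case $d=1$, $a_1=1$, $n_1=N$ one has $\partial_\lambda\log e=-2N/(\lambda^2-1)$, which never vanishes at a finite point --- and for general $d$ any zeros of the derivative depend delicately on the particular $\mathbf n$ and do not sit where a steepest-descent contour would pass. Consequently the Euler constant does not arise via Stirling or a Laplace expansion around a critical point, and the factor $2$ does not come from $\lambda+a_k\approx 2a_k$ ``near the relevant point''.

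What the paper (following Kenyon) actually does is an \emph{endpoint} analysis on a keyhole contour: one deforms $\Gamma$ to two circles $|\lambda|=\varrho_1\ll N^{-3}$ and $|\lambda|=\varrho_2\gg N^3$ joined along a ray avoiding the $\pm a_k$; the jump of $\log\lambda$ across the cut collapses the two ray integrals into $-\tfrac1{4\pi}\int\frac{1}{t}\prod_j\frac{t+b_j}{t-b_j}\,dt$. Near $t=0$ one uses $\prod_j\frac{t\bar b_j+1}{t\bar b_j-1}=(-1)^k\exp\!\bigl(2(\bar v-\bar x_0)t\bigr)\bigl(1+\Od(kt^3)\bigr)$, and near $t=\infty$ the analogous expansion in $t^{-1}$; the resulting exponential integrals $\int e^{ct}/t\,dt$ produce $\log(2|v-x_0|)+\gamma_{\text{Euler}}$ --- that is where both the $\log 2$ (from the coefficient $2$ in $\log\frac{1+s}{1-s}=2s+\Od(s^3)$) and $\gamma_{\text{Euler}}$ enter. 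The sharpening to $\Od(1/|v-x_0|^2)$ is the observation that $\log\tfrac{1+s}{1-s}$ is \emph{odd}, so the Taylor remainder is $\Od(kt^3)$ rather than $\Od(kt^2)$; on the effective scale $|t|\sim 1/N$ with $k=\Od(N)$ this integrates to $\Od(N^{-2})$. Your $\lambda\mapsto-\lambda$ symmetry is the global shadow of this local oddness, but exploiting it directly would require tracking how the $x$-dependent branch of $\log\lambda$ and the contour transform, which you acknowledge but do not carry out; the local expansion bypasses this entirely.
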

\begin{proof}
Consider
a directed path $x_0=w_0,\dots,w_k=v$ in $\mathscr D$ from $x_0$ to $v$ 
such that the directed edges 
of this path are contained in $\{a_m,\dots,a_{m+d-1}\}$ for some
$1\leq m\leq 2d$ as above. 
Note that $k$ is even since $x_0$ and $v$ are both white vertices of $\mathscr
D$. The integration path~$\Gamma$ in~\eqref{defGreen} can be
deformed into a connected contour lying on
a single leaf of the Riemann surface of the logarithm, in particular 
to a simple closed curve $\Gamma_1$
which surrounds the set $\{a_m,\dots,a_{m+d-1}\}$ in a counterclockwise sense
and has the origin and a ray ${\mathscr R}=\{s\text{e}^{i\tilde{\theta}}:
s>0\}$ in its exterior. We also assume that $\Gamma_1$ is
contained in the sector $\{ z=r\text{e}^{i\varphi} : r> 0,\ \varphi\in
[\tilde{\theta}+\frac{\pi}{2} +\eta, \tilde{\theta}+\frac{3\pi}{2} -\eta]\}$
for some $\eta>0$ independent of $v$, $x_0$, and $m$.
This is possible due to the fact that
$\theta_{m+d-1}-\theta_m<\pi-\delta$ for some $\delta>0$ independent of $v$, 
$x_0$, and $m$. 

Let $N=|v-x_0|$. Take $0<\varrho_1\ll 1/N^3$ and $\varrho_2\gg N^3$, but not
exponentially smaller than $1/N$ or bigger than $N$ respectively.
The curve $\Gamma_1$ is again homotopic to a curve $\Gamma_2$ which runs
counterclockwise around the circle of radius $\varrho_2$ about the origin from
the angle $\tilde{\theta}$ to $\tilde{\theta}+2\pi$, then along the ray
${\mathscr
R}$ from $\varrho_2\text{e}^{i\tilde{\theta}}$ to
$\varrho_1\text{e}^{i\tilde{\theta}}$, then clockwise around the circle of
radius $\varrho_1$ about the origin from the angle $\tilde{\theta}+2\pi$ to
$\tilde{\theta}$, and finally back along the ray ${\mathscr R}$ from
$\varrho_1\text{e}^{i\tilde{\theta}}$ to
$\varrho_2\text{e}^{i\tilde{\theta}}$.
Without loss of generality, we assume that  ${\mathscr R}$ is the negative real
axis.

Kenyon showed in~\cite{Ke02} that the integrals along the circles of radius
$\varrho_1$ and $\varrho_2$ give
\[(-1)^k\frac{\log\varrho_1}{4\pi}(1+\Od(N\varrho_1))
-\frac{\log\varrho_2}{4\pi}(1+\Od(N/\varrho_2)).\]

The difference between the value of $\log z$ above and below the negative real
axis is
$2\pi i$. Thus the integrals along the negative real axis can
be combined into 
\[-\frac{1}{4\pi} \int_{-\varrho_2}^{-\varrho_1} \frac{1}{z} \prod_{j=0}^{k-1}
\frac{z+b_j}{z-b_j} dz,\]
where $b_j=w_{j+1}-w_j\in {\cal A}$ is the directed edge
from $w_j$
to $w_{j+1}$ and $k=\Od(N)$ is the number of edges of the path. This
integral can be split into three parts: from  $-\varrho_2$ to $-\sqrt{N}$, from
$-\sqrt{N}$ to $-1/\sqrt{N}$, and from $-1/\sqrt{N}$ to $-\varrho_1$.

The integral is neglectible for the intermediate range
because
\[\left| \frac{t+\text{e}^{i\beta}}{t-\text{e}^{i\beta}}\right| \leq
\text{e}^{2t\cos\beta /(t-1)^2} \]
for negative $t<0$ and due to our assumptions.

For small $|t|$ we have
\[\prod_{j=0}^{k-1}\frac{t+b_j}{t-b_j}
=\prod_{j=0}^{k-1}\frac{t\bar{b}_j+1}{t\bar{b}_j-1} =(-1)^k
\text{e}^{2\sum_{j=0}^{k-1}\bar{b}_jt}(1+\Od(kt^3)),\]
using the Neumann series and a Taylor expansion.
Thus the integral near the origin is
\[ -\frac{(-1)^k}{4\pi}\left( \int_{-1/\sqrt{N}}^{-\varrho_1}
\frac{\text{e}^{2(\bar{v}-\bar{x}_0)t}}{t} dt + 
\int_{-1/\sqrt{N}}^{-\varrho_1}\Od(kt^3)
\frac{\text{e}^{2(\bar{v}-\bar{x}_0)t}}{t} dt \right).\]
Applying similar reasonings and estimations as in Kenyon's
proof, we obtain
\[-\frac{(-1)^k}{4\pi}\left(\log(2\varrho_1 (\bar{v}-\bar{x}_0))
+\gamma_{\text{Euler}} \right) + \Od\left(\frac{1}{N^2}\right).\]
Here $\gamma_{\text{Euler}}$ denotes the Euler $\gamma$ constant.

For large $|t|$ the estimations are very similar. Since
\[\prod_{j=0}^{k-1}\frac{t+b_j}{t-b_j}
=\prod_{j=0}^{k-1}\frac{1+b_jt^{-1}}{1-b_jt^{-1}}
=\text{e}^{2\sum_{j=0}^{k-1}b_jt^{-1}}(1+\Od(kt^{-3})),\]
we get
\begin{multline*}
-\frac{(-1)^k}{4\pi}\left( \int_{-\varrho_2}^{-\sqrt{N}}
\frac{\text{e}^{2(v-x_0)t^{-1}}}{t} dt + 
\int_{-\varrho_2}^{-\sqrt{N}} \Od(kt^{-3})
\frac{\text{e}^{2(v-x_0)t^{-1}}}{t} dt \right) \\
 = -\frac{1}{4\pi}\left(-\log\left(\frac{\varrho_2}{2(v-x_0)}\right)
+\gamma_{\text{Euler}} \right) + \Od\left(\frac{1}{N^2}\right).
\end{multline*}

Since $k$ is even, the sum of all the above integral parts is therefore
given by the right hand side of~\eqref{eqestGreen}.
\end{proof}

For a bounded domain we also define a discrete Green's function with
vanishing boundary values.
Let $W\subset V_{int}(G)$ be a finite subset of vertices. Denote by
$W_\partial\subset W$ the set of boundary vertices which are incident to at
least one vertex in $V(G)\setminus W$. Set $W_{int}=W\setminus W_\partial$ the
interior vertices of $W$. Let $x_0\in W_{int}$ be
an interior vertex. The {\em discrete Green's
function ${\mathcal G}_W(x_0,\cdot)$} is
uniquely defined by the following properties.
\begin{enumerate}[(i)]
 \item $\Delta {\cal G}_W(x_0,v)= -\delta_{x_0}(v)$ for all $v\in
W_{int}$, where the Laplacian is taken with respect to the second
variable.
\item ${\cal G}_W(x_0,v)=0$ for all $v\in W_\partial$.
\end{enumerate}

In the following, we choose $W$ to be a special disk-like set.
Let $x_0\in V(G)$ be a vertex and
let $\rho>2$. Denote the closed disk with center $x_0$ and radius $\rho$ by
$B_\rho(x_0)\subset \C$. Suppose that this disk is entirely covered by the
rhombic
embedding $\mathscr D$. Denote by $V(x_0,\rho)\subset V(G)$ the set of white
vertices lying within $B_\rho(x_0)$.
For $x_1\in V_{int}(x_0,\rho)$ we denote
\[{\cal G}_{x_0,\rho}(x_1,\cdot)= {\cal G}_{V(x_0,\rho)}(x_1,\cdot).\]

The asymptotics of the discrete Green's function $\cal G$ from
Theorem~\ref{theoGreen} can be used to derive the following estimations for
${\cal G}_{x_0,\rho}$.

\begin{proposition}\label{propGrhoRand}
There is a constant $C_1$, independent of $\rho$ and $x_0$, such that
\begin{equation*}
 |{\cal G}_{x_0,\rho}(x_0,v)|\leq C_1/\rho 
\end{equation*}
for all vertices $v\in V_{int}(x_0,\rho)$ which are incident to a boundary
vertex.

Furthermore,
there is a constant $C_2$, independent of $\rho$ and $x_0$, such that
for all interior vertices $x_1\in V_{int}(x_0,\rho)$ incident to $x_0$ and
all $v\in V(x_0,\rho)$ there holds
\begin{equation*}
 |{\cal G}_{x_0,\rho}(x_0,v)-{\cal G}_{x_0,\rho}(x_1,v)|\leq C_2/(|v-x_0|+1).
\end{equation*}
\end{proposition}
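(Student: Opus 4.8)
The plan is to reduce the Dirichlet Green's function ${\cal G}_{x_0,\rho}$ on the disk $V(x_0,\rho)$ to the whole‑plane discrete Green's function ${\cal G}$ of Theorem~\ref{theoGreen} by subtracting a discrete harmonic correction, and then to exploit that both are essentially constant on the ``boundary ring''. Concretely: for a vertex $x_1\in V_{int}(x_0,\rho)$ let $H_{x_1}\colon V(x_0,\rho)\to\R$ be the unique discrete harmonic function on $V_{int}(x_0,\rho)$ whose values on $W_\partial$ agree with ${\cal G}(x_1,\cdot)$. Since $\Delta_v[{\cal G}(x_1,v)-H_{x_1}(v)]=-\delta_{x_1}(v)$ on $V_{int}(x_0,\rho)$ and ${\cal G}(x_1,\cdot)-H_{x_1}$ vanishes on $W_\partial$, uniqueness of the Dirichlet Green's function gives ${\cal G}_{x_0,\rho}(x_1,\cdot)={\cal G}(x_1,\cdot)-H_{x_1}$. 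The only facts about ${\cal G}$ I use are the asymptotics ${\cal G}(x,v)=-\frac{1}{2\pi}\log(2|v-x|)-\frac{\gamma_{\text{Euler}}}{2\pi}+\Od(|v-x|^{-2})$ from Theorem~\ref{theoGreen} (with constant uniform in the quasicrystallic data), the discrete maximum principle Lemma~\ref{MaxPrinzip}, and the elementary geometric fact that, edge lengths being normalized to $1$, every boundary vertex of $V(x_0,\rho)$ and every vertex of $V(x_0,\rho)$ incident to one satisfies $\rho-c\le|v-x_0|\le\rho$ for an absolute constant $c$ depending only on the angle bound $C$ and the dimension bound $d$. We may assume $\mathscr D$ infinite (so that ${\cal G}$ is available) and $\rho$ large, small $\rho$ being absorbed into the constants since ${\cal G}_{x_0,\rho}$ is then bounded uniformly on a combinatorial ball of bounded size.

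For the first estimate I take $x_1=x_0$. On every $w\in W_\partial$ one has $|w-x_0|=\rho\bigl(1+\Od(1/\rho)\bigr)$, hence ${\cal G}(x_0,w)=c_\rho+\Od(1/\rho)$ with $c_\rho:=-\frac{1}{2\pi}\log(2\rho)-\frac{\gamma_{\text{Euler}}}{2\pi}$. Thus the boundary data of $H_{x_0}-c_\rho$ is $\Od(1/\rho)$, and Lemma~\ref{MaxPrinzip} yields $\|H_{x_0}-c_\rho\|_{V(x_0,\rho)}=\Od(1/\rho)$. If $v\in V_{int}(x_0,\rho)$ is incident to a boundary vertex, then also $|v-x_0|=\rho\bigl(1+\Od(1/\rho)\bigr)$, so ${\cal G}(x_0,v)=c_\rho+\Od(1/\rho)$, and therefore ${\cal G}_{x_0,\rho}(x_0,v)={\cal G}(x_0,v)-H_{x_0}(v)=\Od(1/\rho)$, which is the first claim.

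For the second estimate, write, for $x_1$ incident to $x_0$ and any $v\in V(x_0,\rho)$,
\[
 {\cal G}_{x_0,\rho}(x_0,v)-{\cal G}_{x_0,\rho}(x_1,v)=\bigl[{\cal G}(x_0,v)-{\cal G}(x_1,v)\bigr]-\bigl[H_{x_0}(v)-H_{x_1}(v)\bigr].
\]
Since $|x_0-x_1|\le 2$ (it is a diagonal of a rhombus), Theorem~\ref{theoGreen} gives, for $|v-x_0|$ large, ${\cal G}(x_0,v)-{\cal G}(x_1,v)=-\frac{1}{2\pi}\log\frac{|v-x_0|}{|v-x_1|}+\Od(|v-x_0|^{-2})=\Od\bigl(1/(|v-x_0|+1)\bigr)$, using $\bigl||v-x_0|-|v-x_1|\bigr|\le 2$; for the boundedly many $v$ with $|v-x_0|$ small, ${\cal G}(x_0,v)$ and ${\cal G}(x_1,v)$ are bounded uniformly (again by Theorem~\ref{theoGreen}, or from the integral formula~\eqref{defGreen} together with ${\cal G}(x,x)=0$ for the few vertices with $|v-x_0|<2$), so the first bracket is $\Od(1)=\Od\bigl(1/(|v-x_0|+1)\bigr)$ there too. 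For the second bracket, $H_{x_0}-H_{x_1}$ is discrete harmonic on $V_{int}(x_0,\rho)$ with boundary data ${\cal G}(x_0,\cdot)-{\cal G}(x_1,\cdot)$, which by the same computation is $\Od(1/\rho)$ on every boundary vertex; Lemma~\ref{MaxPrinzip} then gives $|H_{x_0}(v)-H_{x_1}(v)|=\Od(1/\rho)=\Od\bigl(1/(|v-x_0|+1)\bigr)$ since $|v-x_0|\le\rho$. Adding the two brackets proves the second claim, with all constants depending only on $C$ and $d$.

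The main obstacle is not any single estimate — each reduces to Theorem~\ref{theoGreen} plus the maximum principle — but ensuring the error constants are genuinely uniform: one must check that the constant in the $\Od(|v-x|^{-2})$ term of Theorem~\ref{theoGreen} does not degenerate as the embedding varies (in that proof it depends only on the $\delta,\eta>0$ coming from the uniform angle bound), and that the ``$|v-x_0|\approx\rho$ on the boundary ring'' fact holds with a constant independent of $x_0$ and of the particular quasicrystallic embedding (a consequence of the bounded rhombus angles and bounded dimension). A secondary, purely bookkeeping point is the treatment of vertices $v$ close to $x_0$ in the second estimate, where Theorem~\ref{theoGreen} does not apply directly; this is handled by absorbing the boundedly many exceptional vertices into the constant, together with ${\cal G}(x,x)=0$ and the symmetry ${\cal G}_{x_0,\rho}(x_1,v)={\cal G}_{x_0,\rho}(v,x_1)$ of the Dirichlet Green's function.
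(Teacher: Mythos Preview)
Your proof is correct and follows essentially the same route as the paper: decompose ${\cal G}_{x_0,\rho}(x_1,\cdot)={\cal G}(x_1,\cdot)-H_{x_1}$ with $H_{x_1}$ the harmonic extension of the boundary data, then use Theorem~\ref{theoGreen} to see that the boundary data (hence, by the maximum principle, $H_{x_1}$) is within $\Od(1/\rho)$ of the constant $c_\rho$, and combine. The paper packages this slightly differently by working directly with the harmonic function $h_\rho(x_1,v)={\cal G}_{x_0,\rho}(x_1,v)-{\cal G}(x_1,v)-\frac{1}{2\pi}(\log(2\rho)+\gamma_{\text{Euler}})$, which is exactly your $-(H_{x_1}-c_\rho)$; the argument is otherwise identical, and your more explicit treatment of the vertices $v$ near $x_0$ in the second estimate just spells out what the paper leaves implicit.
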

\begin{proof}
Consider the function $h_\rho(x_0,\cdot):V(x_0,\rho)\to \R$ defined by
\[h_\rho(x_0,v) ={\cal G}_{x_0,\rho}(x_0,v) -{\cal G}(x_0,v)
-{\textstyle \frac{1}{2\pi}}(\log(2\rho) +\gamma_{\text{Euler}}). \]
Then $h_\rho(x_0,\cdot)$ is harmonic on $V_{int}(x_0,\rho)$.
For boundary vertices
$v\in V_\partial (x_0,\rho)$ Theorem~\ref{theoGreen} implies that
$h_\rho(x_0,v)=\Od(1/\rho)$.
The Maximum Principle~\ref{MaxPrinzip} yields $|h_\rho(x_0,v)|\leq C/\rho$
for all $v\in V(x_0,\rho)$ and some constant $C$ independent of $\rho$ and $v$.
This shows the first estimation.

To prove the second claim, we also consider the harmonic function 
\[h_\rho(x_1,v)
={\cal G}_{x_0,\rho}(x_1,v) -{\cal G}(x_1,v)
-\frac{1}{2\pi}(\log(2\rho) +\gamma_{\text{Euler}})\]
for a fixed interior
vertex $x_1\in V_{int}(x_0,\rho)$ incident to $x_0$. By similar reasonings as
for $h_\rho(x_0,\cdot)$, we deduce
that  $|h_\rho(x_1,v)|\leq \tilde{C}/\rho$
for all $v\in V(x_0,\rho)$ and some constant $\tilde{C}$ independent of $\rho$
and $v$. Theorem~\ref{theoGreen} implies the desired estimation.
\end{proof}

\subsection{Regularity of discrete solutions of elliptic
equations}\label{secRegEllip}

In the following, we generalize and adapt some results of discrete potential
theory for discrete harmonic functions obtained by Duffin in~\cite[see
in particular Lemma~1 and Theorem~3--5]{Du53}. The proofs are very similar or
use ideas of the corresponding proofs in~\cite{Du53}.
Our aim is to obtain the Regularity Lemma~\ref{lemRegIso}.

\begin{lemma}[Green's Identity]\label{lemGreenId}
Let $W\subset V(G)$ be a finite subset of vertices.
Let $u,v:W\to\R$ be two functions. Then
\begin{equation}
 \sum_{x\in W_{int}} (v(x)\Delta u(x) - u(x)\Delta v(x)) =
\sum_{[p,q]\in E_\partial(W)}
c([p,q])(v(p)u(q) -u(p)v(q)),
\end{equation}
where $E_\partial(W) =\{[p,q]\in E(W) : p\in W_{int}, q\in W_\partial\}$.
\end{lemma}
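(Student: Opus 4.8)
The plan is to prove the identity by a direct summation-by-parts computation, exploiting the antisymmetry of the quantity $v(x)u(y)-u(x)v(y)$ under interchange of the pair $(x,y)$.

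First I would expand the left-hand side using Definition~\ref{defLap} of the discrete Laplacian. For an interior vertex $x\in W_{int}$ every neighbour $y$ of $x$ in $G$ lies in $W$ — this is exactly what $W_{int}=W\setminus W_\partial$ encodes — so $\Delta u(x)=\sum_{[x,y]\in E(G)}c([x,y])(u(y)-u(x))$ is well defined and every edge occurring in it belongs to $E(W)$. Substituting the definition gives
\[
\sum_{x\in W_{int}}\bigl(v(x)\Delta u(x)-u(x)\Delta v(x)\bigr)
=\sum_{x\in W_{int}}\;\sum_{[x,y]\in E(G)}c([x,y])\bigl[v(x)(u(y)-u(x))-u(x)(v(y)-v(x))\bigr].
\]
The key elementary observation is that the bracket simplifies, since the terms $u(x)v(x)$ cancel, leaving $v(x)u(y)-u(x)v(y)$.

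Next I would split the inner sum according to whether the neighbour $y$ lies in $W_{int}$ or in $W_\partial$. An edge $[x,y]$ with both endpoints in $W_{int}$ is counted twice in the double sum: once from the endpoint $x$, contributing $c([x,y])(v(x)u(y)-u(x)v(y))$, and once from the endpoint $y$, contributing $c([x,y])(v(y)u(x)-u(y)v(x))$; these two contributions cancel. Hence only edges $[x,y]$ with $x\in W_{int}$ and $y\in W_\partial$ survive; each such edge is counted exactly once (from its interior endpoint), and writing $p=x\in W_{int}$, $q=y\in W_\partial$ it contributes $c([p,q])(v(p)u(q)-u(p)v(q))$. Since these are precisely the edges of $E_\partial(W)$, the sum of the surviving terms is the right-hand side, which finishes the proof.

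There is no serious obstacle here; the only points demanding a little care are (i) checking that the interior Laplacian $\Delta u(x)$, $\Delta v(x)$ for $x\in W_{int}$ involves only edges of $E(W)$, so that no neighbour outside $W$ ever enters the computation, and (ii) the bookkeeping of the pairwise cancellation — that edges internal to $W_{int}$ are annihilated by the antisymmetry while the boundary edges are retained with the correct single multiplicity and the orientation convention $p\in W_{int}$, $q\in W_\partial$ of $E_\partial(W)$.
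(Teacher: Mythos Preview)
Your argument is correct and is exactly the standard summation-by-parts proof of the discrete Green's identity. The paper itself states this lemma without proof, merely referring globally to Duffin's paper~\cite{Du53} for the results of this appendix; so there is nothing to compare against, and your write-up would serve perfectly well as the omitted proof.
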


\begin{corollary}[Representation of harmonic functions]\label{corEstharm}
Let  $u$ 
be a real valued harmonic function defined on $V(x_0,\rho)$. Then
\begin{equation*}
 u(x_0)=\sum_{q\in V_\partial (x_0,\rho)} c(q) u(q),
\end{equation*}
where
\begin{equation}\label{eqestbound}
c(q)=\sum_{\substack{p\in V_{int} (x_0,\rho) \\ \text{and } [p,q]\in E(G)}}
c([p,q]) {\cal G}_{x_0,\rho} (x_0,p) = \Od(1/\rho).
\end{equation}
\end{corollary}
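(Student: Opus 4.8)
The plan is to read off both assertions from Green's Identity (Lemma~\ref{lemGreenId}), applied on the finite vertex set $W:=V(x_0,\rho)$ to the pair of functions $u$ and $v:={\cal G}_{x_0,\rho}(x_0,\cdot)$.

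First I would substitute this particular $v$ into the identity. On the left-hand side the summand $v(x)\Delta u(x)$ vanishes since $u$ is harmonic on $V(x_0,\rho)$, while $-u(x)\Delta v(x)=u(x)\delta_{x_0}(x)$ by the defining property $\Delta{\cal G}_{x_0,\rho}(x_0,\cdot)=-\delta_{x_0}$ on $W_{int}$; summing over $x\in W_{int}$ therefore leaves exactly $u(x_0)$. On the right-hand side the sum runs over the edges $[p,q]\in E_\partial(W)$ with $p\in W_{int}$ and $q\in W_\partial$, and the term $c([p,q])u(p)v(q)$ drops out because ${\cal G}_{x_0,\rho}(x_0,q)=0$ for boundary vertices. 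Regrouping the surviving terms $c([p,q])v(p)u(q)$ according to the boundary vertex $q$ then gives $u(x_0)=\sum_{q\in V_\partial(x_0,\rho)}c(q)u(q)$, where $c(q)$ is exactly the coefficient displayed in~\eqref{eqestbound}.

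It remains to establish the bound $c(q)=\Od(1/\rho)$. Every vertex $p$ occurring in the sum defining $c(q)$ is an interior vertex of $V(x_0,\rho)$ that is incident in $G$ to the boundary vertex $q$, so the first estimate of Proposition~\ref{propGrhoRand} applies and gives $|{\cal G}_{x_0,\rho}(x_0,p)|\le C_1/\rho$. Since $\mathscr D$ has only finitely many edge directions, the weights $c([p,q])=2f_{\alpha([p,q])}'(0)$ and the number of edges of $G$ incident to any fixed vertex are bounded by constants depending only on $\mathscr D$, so the triangle inequality yields $|c(q)|\le C_1'/\rho$ with $C_1'$ independent of $\rho$ and $x_0$.

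I do not expect any genuine obstacle here: the argument is a one-line substitution of the defining properties of ${\cal G}_{x_0,\rho}$ into Lemma~\ref{lemGreenId}, combined with the already-proven boundary estimate of Proposition~\ref{propGrhoRand}. The only point deserving a little care is the index bookkeeping---checking that $E_\partial(W)$ in Green's Identity is precisely the set of pairs $(p,q)$ with $p\in W_{int}$ and $q\in V_\partial(x_0,\rho)$, so that the double sum over $E_\partial(W)$ rearranges cleanly into $\sum_{q}c(q)u(q)$ with the coefficients stated in~\eqref{eqestbound}.
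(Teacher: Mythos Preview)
Your proof is correct and follows exactly the approach the paper intends: apply Green's Identity~\ref{lemGreenId} to $u$ and ${\cal G}_{x_0,\rho}(x_0,\cdot)$ to obtain the representation formula, and then combine the first estimate of Proposition~\ref{propGrhoRand} with the boundedness of the weights $c(e)$ (and of the vertex degrees) to get $c(q)=\Od(1/\rho)$. The paper merely records the second step in one sentence; you have spelled out both steps in the same spirit.
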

The estimation in~\eqref{eqestbound} is a consequence of
Proposition~\ref{propGrhoRand} and of the boundedness of the weights
$c(e)$.

\begin{theorem}\label{theoGaussharm}
 Let  $u:V(x_0,\rho)\to \R$ be a non-negative harmonic function. There is a
constant $C_3$ independent of $\rho$ and $u$ such that
\begin{equation}
 \left| u(x_0) -\frac{1}{\pi\rho^2} \sum_{v\in V_{int} (x_0,\rho)} F^*(v)u(v)
\right| \leq \frac{C_3 u(x_0)}{\rho},
\end{equation}
where
$F^*(v)=\frac{1}{4} \sum_{[z,v]\in E(G)} c([z,v])|z-v|^2$
is the area of the face of the dual graph $G^*$ corresponding to the vertex
$v\in V_{int}(G)$.
\end{theorem}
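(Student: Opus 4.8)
The cornerstone of the proof is the discrete identity
\[\Delta\bigl(|v-x_0|^2\bigr)=4F^*(v)\qquad\text{for every }v\in V_{int}(G),\]
which plays the role of $\Delta\bigl(|z-x_0|^2\bigr)=4$ in the smooth setting. To get it, write for each edge $[v,z]\in E(G)$ the exact expansion $|z-x_0|^2-|v-x_0|^2=|z-v|^2+2\,\Re\bigl((z-v)\overline{(v-x_0)}\bigr)$, so that $\Delta(|v-x_0|^2)=\sum_{[v,z]}c([v,z])|z-v|^2+2\,\Re\bigl(\overline{(v-x_0)}\sum_{[v,z]}c([v,z])(z-v)\bigr)$. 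The first sum equals $4F^*(v)$ by the formula for $F^*$ recorded in Lemma~\ref{lemRegIso1}. The second sum vanishes: on a unit rhombus the diagonal $z-v$ (a $G$-edge) and the $G^*$-diagonal are orthogonal with $c([v,z])(z-v)=\tfrac1i\cdot(\text{the }G^*\text{-diagonal, counterclockwise oriented})$, and the $G^*$-diagonals of the flower of rhombi around an interior vertex $v$ telescope to zero. Consequently $\Psi(v):=\tfrac14\bigl(\rho^2-|v-x_0|^2\bigr)$ satisfies $\Delta\Psi=-F^*$ on $V_{int}(x_0,\rho)$, obeys $\Psi=\Od(\rho)$ on $V_\partial(x_0,\rho)$ (edge lengths are $1$, so $\rho-2<|b-x_0|\le\rho$ there), and $\Psi\ge0$ on $V(x_0,\rho)$.

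Next I apply Green's Identity (Lemma~\ref{lemGreenId}) on $W=V(x_0,\rho)$ to the pair $(u,\Psi)$. Since $u$ is harmonic, $\Delta u\equiv0$ on $W_{int}$ and $\Delta\Psi=-F^*$ there, the left-hand side collapses to $\sum_{v\in V_{int}(x_0,\rho)}F^*(v)u(v)$, giving
\[\sum_{v\in V_{int}(x_0,\rho)}F^*(v)u(v)=\sum_{[p,b]\in E_\partial(W)}c([p,b])\bigl(\Psi(p)u(b)-u(p)\Psi(b)\bigr).\]
So it suffices to show that the boundary sum on the right equals $\pi\rho^2u(x_0)+\Od\bigl(\rho\,u(x_0)\bigr)$.

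For this I compare the boundary sum with the discrete Green's function representation $u(x_0)=\sum_{[p,b]\in E_\partial(W)}c([p,b])\,{\cal G}_{x_0,\rho}(x_0,p)\,u(b)$, which is Green's Identity applied to $\bigl(u,{\cal G}_{x_0,\rho}(x_0,\cdot)\bigr)$ (this is the content of Corollary~\ref{corEstharm}); note that ${\cal G}_{x_0,\rho}(x_0,\cdot)\ge0$ by the minimum principle, so \emph{each} summand of this representation is $\le u(x_0)$. The key point is a multiplicative comparison near the boundary circle: for $p$ adjacent to $V_\partial(W)$ one has $\rho-|p-x_0|=\Od(1)$, and the asymptotic expansion of ${\cal G}$ from Theorem~\ref{theoGreen} (error $\Od(1/\rho^2)$ since $|p-x_0|\asymp\rho$) together with the estimate ${\cal G}_{x_0,\rho}(x_0,\cdot)=\Od(1/\rho)$ from Proposition~\ref{propGrhoRand} gives ${\cal G}_{x_0,\rho}(x_0,p)=\tfrac1{2\pi}\log\tfrac{\rho}{|p-x_0|}+(\text{lower order})$; combined with $\Psi(p)=\tfrac14(\rho-|p-x_0|)(\rho+|p-x_0|)$ this yields $\Psi(p)=\pi\rho^2\,{\cal G}_{x_0,\rho}(x_0,p)\bigl(1+\Od(1/\rho)\bigr)$. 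Inserting this into the boundary sum and using that $c([p,b]){\cal G}_{x_0,\rho}(x_0,p)u(b)\le u(x_0)$ while these terms sum to $u(x_0)$, the $\Psi(p)u(b)$-part becomes $\pi\rho^2u(x_0)+\Od(\rho\,u(x_0))$; the remaining terms $c([p,b])u(p)\Psi(b)$ are treated by the same mechanism, rewriting $\sum c([p,b])u(p)\Psi(b)$ through the harmonicity of $u$ and the harmonic-measure bound $c(q)=\Od(1/\rho)$ of Corollary~\ref{corEstharm}, once more using $u\ge0$. Dividing by $\pi\rho^2$ gives the stated inequality with a constant depending only on the bounded geometry of $\mathscr D$.

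I expect the boundary bookkeeping of the last paragraph to be the main obstacle: one must upgrade the pointwise closeness of $\Psi$ and $\pi\rho^2{\cal G}_{x_0,\rho}(x_0,\cdot)$ to a genuine \emph{relative} estimate in the thin annulus near $\partial B_\rho(x_0)$, and simultaneously control the leftover normal-flux terms; it is exactly here that the hypothesis $u\ge0$ is essential, since the positivity of the Green's function and of harmonic measure is what prevents possibly large boundary values of $u$ from corrupting the bound. All the other ingredients—the identity of the first paragraph, the use of Green's Identity, and the asymptotics of ${\cal G}$—are already available from the results preceding this statement.
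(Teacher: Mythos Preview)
Your opening paragraph and the use of Green's identity with $\Psi$ are exactly right, and the paper uses the very same ingredients. The difficulty you flag in your last paragraph is real, however, and as written the argument does not close. The multiplicative comparison $\Psi(p)=\pi\rho^2\,{\cal G}_{x_0,\rho}(x_0,p)\bigl(1+\Od(1/\rho)\bigr)$ for $p$ adjacent to $V_\partial(x_0,\rho)$ is not obtainable from the stated results: from Theorem~\ref{theoGreen} and the proof of Proposition~\ref{propGrhoRand} one only gets ${\cal G}_{x_0,\rho}(x_0,p)=\tfrac{1}{2\pi}\log\tfrac{\rho}{|p-x_0|}+h_\rho(x_0,p)+\Od(1/\rho^2)$ with $h_\rho=\Od(1/\rho)$. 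Since the main term $\tfrac{1}{2\pi}\log\tfrac{\rho}{|p-x_0|}=\Od(1/\rho)$ is of the \emph{same} order as the harmonic correction $h_\rho$, the relative error is $\Od(1)$, not $\Od(1/\rho)$. Passing to an additive comparison $\Psi(p)-\pi\rho^2{\cal G}_{x_0,\rho}(x_0,p)=\Od(\rho)$ and then summing against $c([p,b])u(b)$ with $\sum_b u(b)=\Od(\rho\,u(x_0))$ loses exactly one power of $\rho$.

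The paper fixes this in one stroke by \emph{combining} the two test functions rather than comparing them: it works with
\[
p(z)={\cal G}(x_0,z)+\tfrac{1}{2\pi}\bigl(\log(2\rho)+\gamma_{\text{Euler}}\bigr)+\tfrac{|z-x_0|^2-\rho^2}{4\pi\rho^2}
={\cal G}(x_0,z)+\text{const}-\tfrac{\Psi(z)}{\pi\rho^2},
\]
built from the \emph{free} Green's function ${\cal G}$, not ${\cal G}_{x_0,\rho}$. Then $\Delta p=-\delta_{x_0}+F^*/(\pi\rho^2)$, and---this is the point---near the boundary the Taylor expansions of $-\tfrac{1}{2\pi}\log\tfrac{|v-x_0|}{\rho}$ and of $\tfrac{|v-x_0|^2-\rho^2}{4\pi\rho^2}$ cancel to first order in $(\rho-|v-x_0|)/\rho$, giving the crucial improvement $p(v)=\Od(1/\rho^2)$ (rather than $\Od(1/\rho)$). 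A single application of Green's identity to $u$ and a sign-shifted $p_1=p\pm B_1/\rho^2$ then yields the boundary sum bounded by $\Od(1/\rho^2)\cdot\sum_{q\in V_\partial}u(q)=\Od(1/\rho^2)\cdot\Od(\rho\,u(x_0))=\Od(u(x_0)/\rho)$, using positivity of $u$ and of $p_1$ to discard the other boundary term. Your $\Psi$ and your Laplacian computation feed directly into this; what is missing from your outline is precisely the choice of the combined test function so that the boundary smallness is $\Od(1/\rho^2)$ additively rather than a hoped-for relative estimate.
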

\begin{proof}
 Consider the function
\[ p(z)={\cal G}(x_0,z)+\frac{1}{2\pi}(\log(2\rho) +\gamma_{\text{Euler}})
+\frac{|z-x_0|^2-\rho^2}{4\pi \rho^2}.\]
Let $z\in V_{int}(G)$ be an interior vertex.
Consider the chain of faces $f_1,\dots,f_m$ of ${\mathscr D}$ 
which are incident to $z$ in counterclockwise order.
The enumeration
of the faces $f_j$ and of the black
vertices $v_1,\dots,v_m$ and the white vertices $z_1,\dots,z_m$ incident to
these faces can be chosen such that
$f_j$ is incident to $v_{j-1}$, $v_j$, and $z_j$ for $j=1,\dots, m$, where
$v_0=v_m$.
Furthermore, using this enumeration we have
\begin{align*}
&\frac{z_j-z}{|z_j-z|}i=\frac{v_j-v_{j-1}}{|v_j-v_{j-1}|}\quad
\iff\quad
\frac{|v_j-v_{j-1}|}{|z_j-z|} (z_j-z) =-i(v_j-v_{j-1});
\end{align*}
see Figure~\ref{faceFig} with $z_-=z$, $z_+=z_j$, $v_-=v_{j-1}$, $v_+=v_j$.
As
$|z_j-x_0|^2-|z-x_0|^2=-2\Re((\overline{z-x_0})(z_j-z)) + |z_j-z|^2$
we deduce by very similar calculations as in the proof of Lemma~\ref{lem1} that
\begin{align*}
\Delta p (z) &=\Delta {\cal G}(x_0,v)+ \frac{1}{4\pi \rho^2} \sum_{j=1}^m
\underbrace{c([z,z_j])}_{=2f_{\alpha([z,z_j])}'(0)} (|z_j-x_0|^2-|z-x_0|^2) \\
&= -\delta_{x_0}(z) + F^*(z)/(\pi \rho^2).
\end{align*}
Let $v$ be incident to a vertex of $V_\partial (x_0,\rho)$.
Theorem~\ref{theoGreen} implies that
\begin{align*}
 p(v) &= -\frac{1}{2\pi}\log\frac{|v-x_0|}{\rho} +\frac{|v-x_0|^2-\rho^2}{4\pi
\rho^2} + \Od\left(\frac{1}{|v-x_0|^2}\right)
= \Od(1/\rho^2).
\end{align*}
Thus there is a constant $B_1$, independent of $\rho$ and $v$, such that
$p_1(v):= p(v)+B_1/\rho^2\geq 0$ and $|p_1(v)|\leq 2B_1/\rho^2$ for all
vertices $v\in V_{int} (x_0,\rho)$ incident to a vertex of $V_\partial
(x_0,\rho)$.
Applying Green's Identity~\ref{lemGreenId} to $p_1$ and the non-negative
harmonic function $u$, we obtain
\begin{align*}
 u(x_0) -\frac{1}{\pi\rho^2} \sum_{v\in V_{int} (x_0,\rho)} F^*(v)u(v)
&= \sum_{x\in V_{int} (x_0,\rho)} (p_1(x)\Delta u(x) - u(x)\Delta p_1(x)) \\
&= \sum_{[z,q]\in E_\rho} c([z,q])(p_1(z)u(q) -\underbrace{u(z)p_1(q)}_{\geq
0}),\\
&\leq \frac{2B_1}{\rho^2} 4\pi \sum_{q\in V_\partial (x_0,\rho)} u(q) 
\leq \frac{8\pi B_1B_2}{\rho}  u(x_0),
\end{align*}
Here $E_\rho=\{[x,y]\in E(G):  x\in
V_{int}(x_0,\rho),\ y\in V_\partial (x_0,\rho)\}$ and we have used the
estimation
\[\sum_{[x,y]\in E(G)} c([x,y]) \leq \sum_{[x,y]\in E(G)}
c([x,y])|x-y|^2 =4 F^*(x) <4\pi \]
for all fixed vertices $x\in V_{int}(G)$. Furthermore
$ \sum_{y\in V_\partial (x_0,\rho)} u(y) \leq B_2 \rho u(x_0)$
for some constant $B_2>0$ as a consequence of Corollary~\ref{corEstharm}.

For the reverse inequality, note that there is also a constant $B_3$
independent of $\rho$ and $v$ such that
$p_1(v):= p(v)-B_3/\rho^2\leq 0$ and $|p_1(v)|\leq 2B_3/\rho^2$ for all
vertices $v\in V_{int} (x_0,\rho)$ incident to a vertex in $V_\partial
(x_0,\rho)$. 
Combining both estimation proves the claim.
\end{proof}

Theorem~\ref{theoGaussharm} can be interpreted as an analog to the Theorem of
Gauss in potential theory.
Furthermore, we can deduce a discrete version of
H\"older's Inequality for non-negative harmonic function.

\begin{theorem}[H\"older's Inequality]\label{theoHoelder}
 Let  $u:V(x_0,\rho)\to \R$ be a non-negative harmonic function. There is a
constant $C_4$, independent of $\rho$ and $u$, such that
\begin{equation}\label{eqHoelder}
 |u(x_0)-u(x_1)|\leq C_4u(x_0)/\rho
\end{equation}
for all vertices $x_1\in V(x_0,\rho)$ incident to $x_0$.
\end{theorem}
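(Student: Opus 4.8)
The plan is to derive the inequality from the discrete Gauss theorem (Theorem~\ref{theoGaussharm}) by comparing the mean value of $u$ over two nested disks, in the spirit of Duffin~\cite{Du53}. Everything rests on the following monotonicity remark: if $u\ge 0$ and $B_{\rho'}(y)\subseteq B_\rho(x)$ are both covered by $\mathscr D$, then $V_{int}(y,\rho')\subseteq V_{int}(x,\rho)$ and, since every summand $F^*(v)u(v)$ is non-negative,
\[
\sum_{v\in V_{int}(y,\rho')}F^*(v)u(v)\ \le\ \sum_{v\in V_{int}(x,\rho)}F^*(v)u(v).
\]
Applying Theorem~\ref{theoGaussharm} to each side turns this into $\pi\rho'^2 u(y)(1+\Od(1/\rho'))\le \pi\rho^2 u(x)(1+\Od(1/\rho))$, hence $u(y)\le (\rho^2/\rho'^2)(1+\Od(1/\rho))\,u(x)$.

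First I would dispose of small $\rho$. For $\rho>2$ the vertex $x_0$ and all of its neighbours lie in $B_\rho(x_0)$, so $u$ is harmonic at $x_0$; the weights $c(e)=2f'_{\alpha(e)}(0)$ are bounded above and below by positive constants depending only on the angle bound $C$, and an interior white vertex has a bounded number of incident edges, so $\Delta u(x_0)=0$ and $u\ge 0$ give $u(x_1)\le M\,u(x_0)$ with $M$ independent of $\rho,u,x_0,x_1$. Fixing a large constant $\rho_0$ once and for all, this settles $|u(x_0)-u(x_1)|\le M\,u(x_0)\le M\rho_0\,u(x_0)/\rho$ for all $2<\rho\le\rho_0$, leaving only $\rho>\rho_0$.

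For $\rho>\rho_0$ I would use that $x_0$ and $x_1$ are opposite white corners of a unit rhombus, so $|x_0-x_1|<2$; therefore $B_{\rho-2}(x_1)\subseteq B_\rho(x_0)$ and $B_{\rho-4}(x_0)\subseteq B_{\rho-2}(x_1)$, and all disks occurring below are covered by $\mathscr D$ and carry non-negative harmonic restrictions of $u$. The monotonicity remark applied with $(x,\rho,\rho',y)=(x_0,\rho,\rho-2,x_1)$ gives $u(x_1)\le(1+\Od(1/\rho))u(x_0)$, and applied with $(x,\rho,\rho',y)=(x_1,\rho-2,\rho-4,x_0)$ gives $u(x_0)\le(1+\Od(1/\rho))u(x_1)$, because $\rho^2/\rho'^2=1+\Od(1/\rho)$ in both cases. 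Choosing $\rho_0$ large enough that the $\Od(1/\rho)$ coefficients stay below $\tfrac12$ in modulus, the first estimate yields both $u(x_1)-u(x_0)=\Od(u(x_0)/\rho)$ and $u(x_1)\le 2u(x_0)$; substituting the latter into the second estimate yields $u(x_0)-u(x_1)=\Od(u(x_0)/\rho)$. Together with the small-$\rho$ case this gives $|u(x_0)-u(x_1)|\le C_4\,u(x_0)/\rho$ with $C_4$ the maximum of $M\rho_0$ and the constants produced along the way.

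I do not anticipate a real obstacle. The two points to be careful about are the bookkeeping of disk inclusions (so that Theorem~\ref{theoGaussharm} genuinely applies with each chosen centre and radius, in particular $\rho-4>2$) and the order of the final estimates: the two Gauss error terms are $\Od(u(x_0)/\rho)$ and $\Od(u(x_1)/\rho)$, and the second may be turned into $\Od(u(x_0)/\rho)$ only \emph{after} the comparison $u(x_1)=\Od(u(x_0))$ has been established. A more computational alternative, closer to the Poisson-kernel picture, is to write $u(x_0)-u(x_1)=\sum_q(\omega(x_0,q)-\omega(x_1,q))u(q)$ via Green's Identity~\ref{lemGreenId}, where $\omega(x_i,\cdot)$ is the harmonic measure built from $\mathcal G_{x_0,\rho}(x_i,\cdot)$, and to estimate the harmonic measures from Proposition~\ref{propGrhoRand}; this route needs the pointwise comparison $|\omega(x_0,q)-\omega(x_1,q)|\le \Od(1/\rho)\,\omega(x_0,q)$, which does not follow at once from the estimates as stated, so the two-radius argument is the one I would carry out.
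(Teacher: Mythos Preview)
Your proposal is correct and follows essentially the approach the paper indicates: the paper does not spell out a proof of Theorem~\ref{theoHoelder} but says it can be deduced from the discrete Gauss Theorem~\ref{theoGaussharm} in the spirit of Duffin~\cite{Du53}, and your two-radius comparison argument is precisely that deduction. The only points to keep tidy are the ones you already flag---ensuring $\rho_0$ is large enough that $\rho-4>2$ and that the $\Od(1/\rho)$ factors stay below $1/2$, and converting the $\Od(u(x_1)/\rho)$ bound into $\Od(u(x_0)/\rho)$ only after $u(x_1)\le 2u(x_0)$ is in hand.
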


As a corollary of H\"older's Inequality and of Proposition~\ref{propGrhoRand}
we obtain the following result on the
regularity of discrete solutions to elliptic equations.

\begin{lemma}[Regularity Lemma]\label{lemRegIso}
 Let $W\subset V(G)$ and let $u:W\to \R$ be any function.
Set 
$M(u)=\max_{v\in W_{int}}|\Delta u(v)/(4F^*(v))|$, where $F^*(v)$ is the area
of the face dual to $v$ as in Theorem~\ref{theoGaussharm}. Define
$\|\eta\|_W:=\max\{|\eta(z)|:z\in W\}$.
There are constants $C_5,C_6>0$, independent of $W$ and $u$, such that
\begin{equation}\label{eqReg}
 |u(x_0)-u(x_1)|\rho \leq C_5\|u\|_W +\rho^2 C_6 M(u)
\end{equation}
for all vertices $x_1\in W$ incident to $x_0\in W_{int}$, where $\rho$ is the
Euclidean
distance of $x_0$ to the boundary $W_\partial$.
\end{lemma}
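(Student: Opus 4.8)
The plan is to work on a discrete disc $V=V(x_0,\rho')$ around $x_0$ (with $\rho'$ comparable to $\rho$), to split $u$ there via Green's Identity into a discrete--harmonic function $H$ carrying the boundary values of $u$ and a discrete Newtonian potential $P$ built from $\Delta u$, and to estimate the contributions of $H$ and $P$ to $u(x_0)-u(x_1)$ separately: the first by H\"older's Inequality (Theorem~\ref{theoHoelder}), the second by Proposition~\ref{propGrhoRand} together with the bounded geometry of $\mathscr D$.

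In detail, I would first discard the case of bounded $\rho$ (where~\eqref{eqReg} is trivial, dropping the $M(u)$ term) and choose $\rho'\le\rho$ maximal with $V(x_0,\rho')\subset W$; since $\rho$ is the Euclidean distance of $x_0$ to $W_\partial$ and $\mathscr D$ has unit edges, $\rho'\ge\rho-\Od(1)$, so it suffices to prove the estimate with $\rho'$. On $V=V(x_0,\rho')$ both $x_0$ and $x_1$ are interior, and every $y\in V_{int}$ has all its $G$--neighbours in $V\subset W$, so $y\in W_{int}$ and $|\Delta u(y)|\le 4F^*(y)M(u)\le 4\pi M(u)$, using the bound $F^*(y)<\pi$ established inside the proof of Theorem~\ref{theoGaussharm}. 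Applying Green's Identity (Lemma~\ref{lemGreenId}) on $V$ with the auxiliary function ${\cal G}_{x_0,\rho'}(x,\cdot)$, which vanishes on $V_\partial$ and has Laplacian $-\delta_x$ on $V_{int}$, yields for $x\in V_{int}$ the representation $u(x)=H(x)-P(x)$ with $P(x)=\sum_{y\in V_{int}}{\cal G}_{x_0,\rho'}(x,y)\,\Delta u(y)$, and one checks (differentiating $P$) that $H$ is discrete--harmonic on $V_{int}$ and equals $u$ on $V_\partial$. I would then use this representation at $x=x_0$ and $x=x_1$.

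For the harmonic contribution, Lemma~\ref{MaxPrinzip} (applied to $H$ and $-H$) gives $\|H\|_V\le\|u\|_W$, so $\tilde H:=H+\|u\|_W\ge 0$ is harmonic on $V$ with $\tilde H(x_0)\le 2\|u\|_W$; Theorem~\ref{theoHoelder} then gives $|H(x_0)-H(x_1)|=|\tilde H(x_0)-\tilde H(x_1)|\le C_4\tilde H(x_0)/\rho'\le 2C_4\|u\|_W/\rho'$. For the potential contribution, $P(x_0)-P(x_1)=\sum_{y\in V_{int}}\bigl({\cal G}_{x_0,\rho'}(x_0,y)-{\cal G}_{x_0,\rho'}(x_1,y)\bigr)\Delta u(y)$, and Proposition~\ref{propGrhoRand} bounds the bracket by $C_2/(|y-x_0|+1)$, so $|P(x_0)-P(x_1)|\le 4\pi C_2\,M(u)\sum_{y\in V_{int}}1/(|y-x_0|+1)$. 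Adding the two bounds, multiplying by $\rho'$, and passing back from $\rho'$ to $\rho$ gives~\eqref{eqReg} with $C_5$ a multiple of $C_4$ and $C_6$ a multiple of $C_2$.

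The one step that is not pure bookkeeping is the geometric sum $\sum_{y\in V(x_0,\rho')}1/(|y-x_0|+1)=\Od(\rho')$: this rests on the fact that $\mathscr D$ has uniformly bounded vertex density, so each annulus $\{\,k\le|z-x_0|<k+1\,\}$ contains only $\Od(k+1)$ white vertices of $\mathscr D$, which in turn follows because $\mathscr D$ has finitely many edge directions and hence rhombus angles in a finite subset of $(0,\pi)$. Minor points to handle carefully are the containment $V(x_0,\rho')\subset W$, and the fact that $H$ need not be sign--definite, so one must shift it to $\tilde H\ge 0$ before H\"older's Inequality applies.
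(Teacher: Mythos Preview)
Your proof is correct and follows essentially the same route as the paper: decompose $u$ on the disc $V(x_0,\rho)$ into a discrete--harmonic part (controlled via H\"older's Inequality~\ref{theoHoelder} after shifting to a non-negative function) and a potential part (controlled via the Green's function difference estimate of Proposition~\ref{propGrhoRand} together with the $\Od(\rho)$ bound on $\sum_{y}1/(|y-x_0|+1)$). The only cosmetic difference is that the paper inserts an auxiliary quadratic $f(z)=M(u)|z-x_0|^2$ and works with $s=u+f-h$ (so that $\Delta s\ge 0$), whereas your direct splitting $u=H-P$ avoids this detour and is slightly cleaner.
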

\begin{proof}
Let $x_1\in W$ be a fixed vertex incident to $x_0$.

First we suppose that $\rho\geq 4$.
Consider the auxiliary function $f(z)=M(u)|z-x_0|^2$. Since $|x_1-x_0|<2$,
we obviously have
\[ |f(x_0)-f(x_1)|=M(u)|x_1-x_0|^2 \leq 4M(u).\]

Let $h:V(x_0,\rho)\to \R$ be the unique harmonic function with boundary values
$h(v)=u(v)+f(v)$ for $v\in V_\partial (x_0,\rho)$. H\"older's
Inequality~\eqref{eqHoelder} and the Maximum Principle~\ref{MaxPrinzip} imply
that
\[|h(x_0)-h(x_1)|\rho \leq B_1\|h\|_{V(x_0,\rho)} \leq B_1(\|u\|_W
+M(u)\rho^2) \]
for some constant $B_1$ independent of $h$, $\rho$, $x_0$, $x_1$.

Next consider $s=u+f-h$ on $V(x_0,\rho)$. Then 
\[\begin{cases} \Delta s =\Delta u +4 F^* M(u)\geq 0   &\text{on }
V_{int}(x_0,\rho), \\
 s(v)=0 &\text{for } v\in V_\partial (x_0,\rho).
\end{cases}\] 
The Maximum Principle~\ref{MaxPrinzip} implies $s\leq 0$.
Green's Identity~\ref{lemGreenId} gives
\begin{multline*}
 s(x_0) +\sum_{v\in V_{int} (x_0,\rho)} {\cal G}_{x_0,\rho}(x_0,v)\Delta s(v) \\
= \sum_{v\in V_{int} (x_0,\rho)} ({\cal G}_{x_0,\rho}(x_0,v)\Delta s(v) - s(v)
\Delta {\cal G}_{x_0,\rho}(x_0,v) ) \\
= \sum_{[p,q]\in E_\rho} c([p,q])({\cal G}_{x_0,\rho}(x_0,p)s(q) -s(p){\cal
G}_{x_0,\rho}(x_0,q)) 
=0,
\end{multline*}
where $E_\rho= \{[p,q]\in E(G) : p\in V_{int}(x_0,\rho), q\in
V_\partial (x_0,\rho)\}$.
Analogously,
\[s(x_1) +\sum_{v\in V_{int} (x_0,\rho)} {\cal G}_{x_0,\rho}(x_1,v)\Delta s(v)
=0.\]
Using the estimation $\Delta s(v) \leq 8F^*(v)M(u)$ we deduce that
\begin{align*}
 |s(x_0)-s(x_1)| &\leq \sum_{v\in V_{int} (x_0,\rho)} |{\cal
G}_{x_0,\rho}(x_0,v)
-{\cal G}_{x_0,\rho}(x_1,v)|8F^*(v)M(u).
\end{align*}
Now Proposition~\ref{propGrhoRand} implies that
\begin{align*}
|s(x_0)-s(x_1)| &\leq 8B_2 M(u) \sum_{v\in V_{int} (x_0,\rho)}
\frac{F^*(v)}{|v-x_0|+1}
\leq 8B_2 M(u)
B_3 \rho,
\end{align*}
where $B_2$ and $B_3$ are constants independent of $s$, $\rho$, $x_0$, $x_1$.

Combining the above estimations for $f$, $h$, and $s$, we finally obtain
\begin{align*}
|u(x_0)-u(x_1)|\rho &\leq |s(x_0)-s(x_1)- (f(x_0)-f(x_1)) + h(x_0)-h(x_1)|\rho
\\
 &\leq B_1\|u\|_W + \rho^2 (4+B_1+8B_2B_3) M(u).
\end{align*}
This implies the claim for $\rho\geq 4$.
For $\rho<4$ inequality~\eqref{eqReg} can be deduced from
\[ -4F^*(x_0) M(u) \leq \Delta u (x_0)= \sum_{[x_0,v]\in E(G)} c([x_0,v])
(u(v)-u(x_0)) \leq 4F^*(x_0) M(u).\]
using $F^*(x_0)\leq \pi$, $\sum_{[x_0,v]\in E(G)} c([x_0,v]) \leq 4\pi$, and
the uniform boundedness of the weights $c(e)$.
\end{proof}

\small
\bibliography{../dissertation,../paperbibliography}

\providecommand{\bysame}{\leavevmode\hbox to3em{\hrulefill}\thinspace}
\providecommand{\MR}{\relax\ifhmode\unskip\space\fi MR }
\providecommand{\MRhref}[2]{%
  \href{http://www.ams.org/mathscinet-getitem?mr=#1}{#2}
}
\providecommand{\href}[2]{#2}
\begin{thebibliography}{BSSZ08}

\bibitem[AB00]{AB00}
S.~I. Agafonov and A.~I. Bobenko, \emph{Discrete {$Z^\gamma$} and {P}ainlev\'e
  equations}, Internat. Math. Res. Notices \textbf{4} (2000), 165--193.

\bibitem[BH03]{BH03}
A.~I. Bobenko and T.~Hoffmann, \emph{Hexagonal circle patterns and integrable
  systems: {P}atterns with constant angles}, Duke Math. J. \textbf{116} (2003),
  525--566.

\bibitem[BMS05]{BMS05}
A.~I. Bobenko, Ch. Mercat, and Yu.~B. Suris, \emph{Linear and nonlinear
  theories of discrete analytic functions. {I}ntegrable structure and
  isomonodromic {G}reen's function}, J. reine angew. Math. \textbf{583} (2005),
  117--161.

\bibitem[BS04]{BS02}
A.~I. Bobenko and B.~A. Springborn, \emph{Variational principles for circle
  patterns and {K}oebe's theorem}, Trans. Amer. Math. Soc. \textbf{356} (2004),
  659--689.

\bibitem[BS08]{BS08}
A.~I. Bobenko and Yu.~B. Suris, \emph{Discrete differential geometry. {T}he
  integrable structure}, to appear in 2008.

\bibitem[BSSZ08]{BSSZ08}
A.~I. Bobenko, P.~Schr\"oder, J.~M. Sullivan, and G.~M. Ziegler (eds.),
  \emph{Discrete differential geometry}, Oberwolfach Seminars, vol.~38,
  Birkh\"auser, Basel, 2008.

\bibitem[B{\"u}c07]{diss}
U.~B{\"u}cking, \emph{Approximation of conformal mappings by circle patterns
  and discrete minimal surfaces}, Ph.D. thesis, Technische Universit\"at
  Berlin, 2007, published online at
  \url{http://opus.kobv.de/tuberlin/volltexte/2008/1764/}.

\bibitem[CR92]{CR92}
I.~Carter and B.~Rodin, \emph{An inverse problem for circle packing and
  conformal mapping}, Trans. Amer. Math. Soc. \textbf{334} (1992), 861--875.

\bibitem[DK85]{DK}
M.~Duneau and A.~Katz, \emph{Quasiperiodic patterns}, Phys. Rev. Lett.
  \textbf{54} (1985), 2688--2691.

\bibitem[Duf53]{Du53}
R.~J. Duffin, \emph{Discrete potential theory}, Duke Math. J. \textbf{20}
  (1953), 233--251.

\bibitem[Duf68]{Du68}
\bysame, \emph{Potential theory on a rhombic lattice}, J. Combin. Th.
  \textbf{5} (1968), 258--272.

\bibitem[GR86]{GR}
F.~G{\"a}hler and J.~Rhyner, \emph{Equivalence of the generalized grid and
  projection methods for the construction of quasiperiodic tilings}, J. Phys. A
  \textbf{19} (1986), 267--277.

\bibitem[He99]{He99}
Z.-X. He, \emph{Rigidity of infinite disk patterns}, Ann. of Math. \textbf{149}
  (1999), 1--33.

\bibitem[HS96]{HeSch96}
Z.-X. He and O.~Schramm, \emph{On the convergence of circle packings to the
  {R}iemann map}, Invent. Math. \textbf{125} (1996), 285--305.

\bibitem[HS98]{HeSch98}
\bysame, \emph{The {$C^\infty$}-convergence of hexagonal disk packings to the
  {R}iemann map}, Acta Math. \textbf{180} (1998), 219--245.

\bibitem[Ken02]{Ke02}
R.~Kenyon, \emph{The {L}aplacian and {D}irac operators on critical planar
  graphs}, Invent. math. \textbf{150} (2002), 409--439.

\bibitem[LD07]{LD07}
S.-Y. {Lan} and D.-Q. {Dai}, \emph{The {$C^\infty$-convergence} of {SG} circle
  patterns to the {R}iemann mapping}, J. of Math. Analysis and Appl.
  \textbf{332} (2007), 1351--1364.

\bibitem[Mat05]{Ma05}
D.~Matthes, \emph{Convergence in discrete {C}auchy problems and applications to
  circle patterns}, Conform. Geom. Dyn. \textbf{9} (2005), 1--23.

\bibitem[Mer01]{Me}
Ch. Mercat, \emph{Discrete {R}iemann surfaces and the {I}sing model}, Commun.
  Math. Phys. \textbf{218} (2001), 177--216.

\bibitem[Riv94]{Ri94}
I.~Rivin, \emph{Euclidean structures on simplicial surfaces and hyperbolic
  volume}, Ann. of Math. \textbf{139} (1994), 553--580.

\bibitem[RS87]{RS87}
B.~Rodin and D.~Sullivan, \emph{The convergence of circle packings to the
  {R}iemann mapping}, J. Diff. Geom. \textbf{26} (1987), 349--360.

\bibitem[SC97]{SC97}
L.~Saloff-Coste, \emph{Some inequalities for superharmonic functions on
  graphs}, Potential Anal. \textbf{6} (1997), 163--181.

\bibitem[Sch97]{Sch97}
O.~Schramm, \emph{Circle patterns with the combinatorics of the square grid},
  Duke Math. J. \textbf{86} (1997), 347--389.

\bibitem[Sen95]{Se}
M.~Senechal, \emph{Quasicrystals and geometry}, Cambridge Univ. Press, 1995.

\bibitem[Spr03]{Spr03}
B.~A. Springborn, \emph{Variational principles for circle patterns}, Ph.D.
  thesis, Technische Universit\"at Berlin, 2003, published online at
  \url{http://opus.kobv.de/tuberlin/volltexte/2003/668/}.

\bibitem[Ste05]{St05}
K.~Stephenson, \emph{Introduction to circle packing: the theory of discrete
  analytic functions}, Cambridge University Press, New York, 2005.

\bibitem[Thu85]{Thu85}
B.~Thurston, \emph{The finite {R}iemann mapping theorem}, Invited address at
  the International Symposioum in Celebration of the proof of the Bieberbach
  Conjecture, Purdue University, March 1985.

\end{thebibliography}

\end{document}